\theoremstyle{plain}
	\newtheorem{thm}{Theorem}[section]
	\newtheorem{prp}[thm]{Proposition}
	\newtheorem{lem}[thm]{Lemma}
	\newtheorem{cor}[thm]{Corollary}
\theoremstyle{definition}
	\newtheorem{dfn}[thm]{Definition}
	\newtheorem{ex}[thm]{Example}
	\newtheorem{conjecture}[thm]{Conjecture}
\theoremstyle{remark}
	\newtheorem{rem}[thm]{Remark}
\newcommand{ \Map}{\operatorname{Map}}
\newcommand{ \cat}{\operatorname{cat}}
\newcommand{ \ad}{\operatorname{ad}}
\newcommand{ \GL}{\operatorname{GL}}
\newcommand{ \SU}{\operatorname{SU}}
\begin{document}
\title{Mapping spaces from projective spaces}
\author{Mitsunobu Tsutaya}
\address{Department of Mathematics, Kyoto University, Kyoto 606-8502, Japan}
\email{tsutaya@math.kyoto-u.ac.jp}
\keywords{mapping space, homotopy fiber sequence, $A_n$-space, higher homotopy commutativity, gauge group}
\subjclass[2010]{54C35 (primary), 18D50, 55P45 (secondary)}
\begin{abstract}
We denote the $n$-th projective space of a topological monoid $G$ by $B_nG$ and the classifying space by $BG$.
Let $G$ be a well-pointed topological monoid having the homotopy type of a CW complex and $G'$ a well-pointed grouplike topological monoid.
We prove that there is a natural weak equivalence between the pointed mapping space $\mathrm{Map}_0(B_nG,BG')$ and the space $\mathcal{A}_n(G,G')$ of all $A_n$-maps from $G$ to $G'$.
Moreover, if we suppose $G=G'$, then an appropriate union of path-components of $\mathrm{Map}_0(B_nG,BG)$ is delooped.

This fact has several applications.
As the first application, we show that the evaluation fiber sequence $\mathrm{Map}_0(B_nG,BG)\rightarrow\mathrm{Map}(B_nG,BG)\rightarrow BG$ extends to the right.
As other applications, we investigate higher homotopy commutativity, $A_n$-types of gauge groups, $T_k^f$-spaces and homotopy pullback of $A_n$-maps.
The concepts of $T_k^f$-space and $C_k^f$-space were introduced by Iwase--Mimura--Oda--Yoon, which is a generalization of $T_k$-spaces by Aguad\'e.
In particular, we show that the $T_k^f$-space and the $C_k^f$-space are exactly the same concept and give some new examples of $T_k^f$-spaces.
\end{abstract}
\date{}
\maketitle

\section{Introduction}
\label{section_Introduction}

In this paper, we study maps between topological monoids which preserve associativity up to higher homotopy.
In homotopy theory, homomorphisms are sometimes too restrictive.
M. Sugawara \cite{Sug60} studied the condition for a map $f\colon G\rightarrow G'$ between topological monoids to be the loop of a map $BG\rightarrow BG'$ between the classifying spaces.
Roughly, his answer is that $f$ is a loop map if $f$ preserves the multiplications on $G$ and $G'$ up to infinitely higher homotopy associativity.
In the proof of it, he used the Dold--Lashof construction \cite{DL59}.
After that, J. Stasheff \cite{Sta63b} introduced \textit{$A_n$-maps}, which are maps between topological monoids  preserving $n$-th homotopy associativity.
As a generalization of Sugawara's result, he gave an equivalent condition for a map being an $A_n$-map using the finite stages of the Dold--Lashof construction.
The $n$-th Dold--Lashof construction of a topological monoid $G$ is called the \textit{$n$-th projective space} $B_nG$ since those of $\mathbb{Z}/2\mathbb{Z}$, $S^1$ and $S^3$ are the classical projective spaces $\mathbb{R}P^n$, $\mathbb{C}P^n$ and $\mathbb{H}P^n$, respectively.

We will refine Stasheff's result using mapping spaces.
More precisely, our main result Theorem \ref{mainthm} is the weak equivalence
\begin{align*}
\mathcal{A}_n(G,G')\simeq\Map_0(B_nG,BG'),
\end{align*}
where $\mathcal{A}_n(G,G')$ is the space of $A_n$-maps with $A_n$-forms between $G$ and $G'$, and $\Map_0(B_nG,BG')$ is the space of pointed maps between $B_nG$ and $BG'$.
The correspondence of the path-components was already known by M. Fuchs \cite{Fuc65} for $n=\infty$.

As an application of this result, we prove that the evaluation fiber sequence extends as
\begin{align*}
\overline{\Map}_0(B_nG,BG)\rightarrow\overline{\Map}(B_nG,BG)\rightarrow BG\rightarrow B\mathcal{W}\mathcal{A}_n(G,G;\mathrm{eq}),
\end{align*}
where the spaces $\overline{\Map}_0(B_nG,BG)$ and $\overline{\Map}(B_nG,BG)$ are unions of appropriate path-components in $\Map_0(B_nG,BG)$ and $\Map(B_nG,BG)$, respectively, $\mathcal{A}_n(G,G;\mathrm{eq})$ denotes the space of self-$A_n$-equivalences on $G$, and the functor $\mathcal{W}$ is a kind of ``cofibrant replacement''.
This is a generalization of the well-known extension of the evaluation fiber sequence
\[
	\overline{\Map}_0(X,X)\rightarrow\overline{\Map}(X,X)\rightarrow X\rightarrow B\mathcal{W}\overline{\Map}_0(X,X)\rightarrow B\mathcal{W}\overline{\Map}(X,X),
\]
where $\overline{\Map}_0(X,X)\subset\Map_0(X,X)$ and $\overline{\Map}(X,X)\subset\Map(X,X)$ are the monoids consisting of homotopy equivalences.
This extension can be found in \cite{Got73,May80}.
Moreover, our result gives the maximum extension because the path-component $\Map(B_nG,BG;\iota_n)$ of the inclusion $\iota_n\colon B_nG\to BG$ cannot be delooped in general.
We will give such an example.

This paper is organized as follows.
In Section \ref{section_mapping_spaces}, we collect elementary facts about mapping spaces.
In Section \ref{section_maps_between_cubes}, some maps between cubes are defined, which will be used to describe the topological category $\mathcal{A}_n$.
In Section \ref{section_category_of_A_n-maps}, we define the topological category $\mathcal{A}_n$ of topological monoids and $A_n$-maps.
In Section \ref{section_bar_construction}, we construct the continuous bar construction functor.
In Section \ref{section_mapping_from_projective}, we investigate the mapping spaces from projective spaces and show the above weak equivalence.
The rest is devoted to applications.
In Section \ref{section_evaluation}, we give an extension of the evaluation fiber sequence as above.
In Section \ref{section_appl_comm}, the relation with our result and various higher homotopy commutativities such as $C_k$-spaces by Williams \cite{Wil69}, $C^k$-spaces by Sugawara \cite{Sug60}, $C_k(n)$-spaces by Hemmi--Kawamoto \cite{HK11} and $C(k,\ell)$-spaces by Kishimoto--Kono \cite{KK10} are studied.
In Section \ref{section_appl_gauge}, equivalent conditions for the adjoint bundle of a principal bundle being trivial are given.
In Section \ref{section_appl_cyclic}, we give some application to $T_k^f$-spaces introduced by Iwase--Mimura--Oda--Yoon \cite{IMOY12}.
In particular, we show that a pointed space is a $T_k^f$-space if and only if it is a $C_k^f$-space and, as an example, study when $B\SU(2)$ is a $T_k^f$-space for a map $f\colon S^4\to B\SU(2)$.
In Section \ref{section_homotopypullback}, we make some remarks on a relation of our result and homotopy pullbacks of $A_n$-maps.

The author would like to express his gratitude to Professors Kouyemon Iriye, Norio Iwase, Daisuke Kishimoto and Nobuyuki Oda for fruitful discussions and encouragement for this work.

\section{Preliminaries on mapping spaces}
\label{section_mapping_spaces}
We collect elementary facts on mapping spaces.
We refer to \cite[Section 2.4, 4.2]{Hov99} about the category of compactly generated spaces.
We will work in the categories $\mathbf{CG}$ of compactly generated spaces and $\mathbf{CG}_*$ of pointed ones.
In these categories, a map $f\colon X\rightarrow Y$ is said to be a \textit{weak equivalence} if $f$ induces isomorphisms on homotopy groups with respect to any basepoints $x_0\in X$ and $f(x_0)\in Y$.
\begin{dfn}
For compactly generated spaces $X$ and $Y$, we denote the \textit{mapping space} between $X$ and $Y$ by $\Map(X,Y)$ which consists of continuous maps from $X$ to $Y$ as a set.
We write the subspace of basepoint preserving maps by $\Map_0(X,Y)$ for pointed spaces $X$ and $Y$.
For a pointed map $\phi\colon X\to Y$, the path-component of $\Map(X,Y)$ containing $\phi$ is denoted by $\Map(X,Y;\phi)$.
Similarly, we denote $\Map_0(X,Y;\phi):=\Map(X,Y;\phi)\cap\Map_0(X,Y)$.
Do not confuse it with the path-component of $\Map_0(X,Y)$ containing $\phi$.
Unless otherwise stated, the basepoints of $\Map_0(X,Y)$ and $\Map(X,Y)$ are the constant map.
\end{dfn}
The functors $\Map$ and $\Map_0$ satisfy the following exponential laws:
\begin{align*}
\Map(X,\Map(Y,Z))&\cong\Map(X\times Y,Z),\\
\Map_0(X,\Map_0(Y,Z))&\cong\Map_0(X\wedge Y,Z),
\end{align*}
where $X\wedge Y:=(X\times Y)/(X\times *\cup *\times Y)$ denotes the smash product of $X$ and $Y$.

From this, the evaluation map
\begin{align*}
\Map(X,Y)\times X\rightarrow Y,\quad (f,x)\mapsto f(x)
\end{align*}
is continuous since it is the adjoint map of the identity map $\Map(X,Y)\rightarrow\Map(X,Y)$.
Moreover, the composition
\begin{align*}
\circ\colon\Map(Y,Z)\times\Map(X,Y)\rightarrow\Map(X,Z),\quad (g,f)\mapsto g\circ f
\end{align*}
is continuous since it is the adjoint of the continuous map
\begin{align*}
\Map(Y,Z)\times\Map(X,Y)\times X\rightarrow Z,\quad (g,f,x)\mapsto g(f(x)).
\end{align*}
Similar properties hold for $\Map_0$.

\begin{prp}
\label{prp_equiv_map_sp}
Let $X$ and $X'$ be pointed CW complexes and $Y$ and $Y'$ pointed spaces.
Then, pointed weak equivalences $f\colon X\rightarrow Y$ and $f'\colon X'\rightarrow Y'$ induce the following weak equivalences:
\begin{align*}
f^{\#}&\colon\Map_0(Y,X')\rightarrow\Map_0(X,X'),\\
f^{\#}&\colon\Map(Y,X')\rightarrow\Map(X,X'),\\
f'_{\#}&\colon\Map_0(X,X')\rightarrow\Map_0(X,Y'),\\
f'_{\#}&\colon\Map(X,X')\rightarrow\Map(X,Y').
\end{align*}
\end{prp}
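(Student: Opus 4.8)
The plan is to prove that each of the four maps is a weak equivalence by showing it induces a bijection on $\pi_k$ at every basepoint, which is exactly the definition of weak equivalence given above. Fix a based map $\phi$ to serve as a basepoint of the mapping space in question. By the pointed exponential law, a based map $(S^k,s_0)\to(\Map_0(A,B),\phi)$ is adjoint to a map $S^k\times A\to B$ that is constant at the basepoint on $S^k\times *$ and equals $\phi$ on $\{s_0\}\times A$, and homotopies correspond to homotopies rel $S^k\times *\cup\{s_0\}\times A$. Thus $\pi_k(\Map_0(A,B),\phi)$ is identified with the set of relative homotopy classes of maps out of the pair $(S^k\times A,\ S^k\times *\cup\{s_0\}\times A)$ into $(B,*)$, while $\pi_0=[A,B]_*$; the unpointed space $\Map(A,B)$ is handled identically with the pair $(S^k\times A,\{s_0\}\times A)$ and no basepoint condition in the $A$-direction. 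In this description the maps $f'_{\#}$ change the target $B$ along $f'$, whereas the maps $f^{\#}$ change the source $A$ along $f$.

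For the postcomposition maps $f'_{\#}$ the source $A=X$ is a pointed CW complex, so $(S^k\times X,\ S^k\times *\cup\{s_0\}\times X)$ is a CW pair. Since $f'\colon X'\to Y'$ is a weak equivalence, the relative Whitehead theorem (equivalently May's HELP lemma) yields a bijection between rel-boundary homotopy classes of extensions into $X'$ and those into $Y'$; applied to the pair above this shows $(f'_{\#})_*$ is bijective on every $\pi_k$ and for every basepoint, and the same lemma gives the bijection on $\pi_0$. Hence $f'_{\#}$ is a weak equivalence, and the unpointed case is verbatim with the pair $(S^k\times X,\{s_0\}\times X)$. Conceptually this is the assertion that $\Map(X,-)$ and $\Map_0(X,-)$ preserve weak equivalences when $X$ is a CW complex, and this step is routine.

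The precomposition maps $f^{\#}$ are the main obstacle, because now the source is altered along $f\colon X\to Y$ with $Y$ not assumed to be a CW complex, so $(S^k\times Y,\dots)$ is not a CW pair and Whitehead's theorem does not apply directly. My plan is to trade $f$ for a homotopy equivalence: choose a CW approximation $c\colon \Gamma Y\to Y$ and lift $f$ through it to $\tilde f\colon X\to \Gamma Y$ with $c\circ\tilde f\simeq f$; since $X$ and $\Gamma Y$ are CW complexes and $\tilde f$ is a weak equivalence by two-out-of-three, $\tilde f$ is a homotopy equivalence, so $\tilde f^{\#}$ is a homotopy equivalence of mapping spaces. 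Because $f^{\#}\simeq\tilde f^{\#}\circ c^{\#}$, the whole question reduces to controlling the comparison $c^{\#}$ along the CW approximation, and it is precisely here that the hypothesis that the target $X'$ is a CW complex must be exploited. When $Y$ itself has the homotopy type of a CW complex — which is the situation in every application in this paper — the map $c$ is already a homotopy equivalence, $c^{\#}$ is automatically a weak equivalence, and the obstacle evaporates. The delicate point, and the step I would scrutinize most carefully, is whether the CW-target hypothesis on $X'$ alone suffices to force $c^{\#}$ to be a weak equivalence for general $Y$; once this comparison is settled, the unpointed precomposition statement follows by the same reduction.
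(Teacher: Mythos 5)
The paper states this proposition without proof, treating it as standard, so there is no argument of the author's to compare against; judged on its own, your proposal is half complete. The postcomposition half is correct and fully worked out: identifying $\pi_k(\Map_0(A,B),\phi)$ with homotopy classes rel $S^k\times\ast\cup\{s_0\}\times A$ of maps $S^k\times A\to B$ and applying HELP to the CW pair $(S^k\times X,\,S^k\times\ast\cup\{s_0\}\times X)$ and the weak equivalence $f'\colon X'\to Y'$ does prove that both $f'_{\#}$ maps are weak equivalences, and your reduction of the precomposition maps to a CW approximation $c\colon\Gamma Y\to Y$ (so that $f^{\#}\simeq\tilde f^{\#}\circ c^{\#}$ with $\tilde f^{\#}$ a homotopy equivalence) is also sound.

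The step you flagged as delicate, however, is a genuine gap, and it cannot be closed: the residual claim that $c^{\#}\colon\Map_0(Y,X')\to\Map_0(\Gamma Y,X')$ is a weak equivalence for an arbitrary pointed space $Y$ and a CW complex $X'$ is false, so the first two assertions of the proposition fail as literally stated. Take $X=\ast$, $X'=S^1$, and $Y=W$ the Warsaw circle (pointed at an interior point of its accessible arc), with $f\colon\ast\to W$ the inclusion. Every map from a sphere into $W$ has compact, locally connected image and therefore lands in a contractible arc, so $W$ is weakly contractible and $f$ is a pointed weak equivalence; yet the quotient map $q\colon W\to W/(\{0\}\times[-1,1])\cong S^1$ induces an isomorphism on $\check{H}^1\cong\Z$ (\v{C}ech cohomology) and hence is essential, so $\pi_0\Map_0(W,S^1)\neq\ast=\pi_0\Map_0(\ast,S^1)$; the same example defeats the unpointed version. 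Thus the two $f^{\#}$ statements require the additional hypothesis that $Y$ has the pointed homotopy type of a CW complex, under which $f$ is a pointed homotopy equivalence and $f^{\#}$ is even a homotopy equivalence --- exactly the resolution you already identified, and the only situation in which the paper ever invokes them (for instance in the proof of Proposition \ref{prp_equiv_mappingsp_A_n}, where all the relevant sources have CW homotopy type). In short: your proof of the $f'_{\#}$ statements stands, and for the $f^{\#}$ statements the point you singled out is not merely delicate but is precisely where the proposition, read literally, breaks down.
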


\begin{rem}
This proposition obviously generalizes to the case when $X$ and $X'$ are only assumed to have the pointed homotopy types of CW complexes.
\end{rem}

\begin{dfn}
A pointed space $X$ is said to be \textit{well-pointed} if the inclusion of the basepoint $*\subset X$ has the homotopy extension property.
\end{dfn}

If $X$ is well-pointed and $Y$ is pointed, then the evaluation $\Map(X,Y)\rightarrow Y$ at the basepoint has the homotopy lifting property and its fiber at the basepoint is $\Map_0(X,Y)$.

\begin{ex}
\begin{enumerate}[(i)]
\item
Every pointed CW complex is well-pointed.
\item
It is well-known that a pointed space $X$ is well-pointed if and only if the pair $(X,*)$ is an NDR pair, that is, there exist a map $u\colon X\rightarrow [0,1]$ and a homotopy $h\colon[0,1]\times X\rightarrow X$ such that $u^{-1}(0)=*$ and the following equalities hold:
\begin{align*}
&H(0,x)=x&\text{ for }x\in X,\\
&H(t,*)=*&\text{ for }0\le t\le 1,\\
&H(1,x)=*&\text{ if }u(x)<1.
\end{align*}
Suppose that there exist a map $u\colon X\rightarrow [0,1]$ and a homotopy $h\colon[0,1]\times X\rightarrow X$ as above.
Let $K$ be a compact pointed space.
Define $u'\colon\Map(K,X)\rightarrow [0,1]$ and $H'\colon[0,1]\times\Map(K,X)\rightarrow\Map(K,X)$ by
\begin{align*}
&u'(f)=\max u(f(K)),\\
&H'(t,f)(k)=H(t,f(k)).
\end{align*}
Then $u'$ and $H'$ satisfy the above properties for the pairs $(\Map(K,X),*)$ and $(\Map_0(K,X),*)$.
Therefore, the mapping spaces $\Map(K,X)$ and $\Map_0(K,X)$ are well-pointed.
\end{enumerate}
\end{ex}

For a pointed space $X$, the space $\Omega X:=\Map_0(S^1,X)$ is called the \textit{based loop space} of $X$, which has the pointed homotopy type of a CW complex if $X$ is pointed homotopy equivalent to a CW complex.
By concatenation of loops, $\Omega X$ becomes a homotopy associative $H$-space with homotopy unit.
To make this operation associative and unital, we use the Moore path technique.
The \textit{Moore based path space} $P^{\mathrm{M}}X$ of $X$ is defined by
\begin{align*}
P^{\mathrm{M}}X:=\{(g,\ell)\in\Map([0,\infty),X)\times[0,\infty)\mid g(t)=*\text{ for }t\ge\ell\}.
\end{align*}
There is the evaluation map
\begin{align*}
e\colon P^{\mathrm{M}}X\rightarrow X,\quad e(g,\ell)=g(0),
\end{align*}
which is a Hurewicz fibration.
The fiber over the basepoint is denoted by $\Omega^{\mathrm{M}}X$ and called the \textit{Moore based loop space}.
There is the associative concatenation operation: for $(g,\ell)\in P^{\mathrm{M}}X,(g',\ell')\in\Omega^{\mathrm{M}}X$, $g+g'\colon[0,\infty)\rightarrow X$ is defined by
\begin{align*}
(g+g')(t)=\left\{
\begin{array}{ll}
g(t) & (t\le\ell) \\
g'(t-\ell) & (t\ge\ell)
\end{array}
\right.
\end{align*}
and $(g,\ell)+(g',\ell'):=(g+g',\ell+\ell')\in P^{\mathrm{M}}X$.
This operation gives continuous maps 
\begin{align*}
&+\colon P^{\mathrm{M}}X\times\Omega^{\mathrm{M}}X\rightarrow P^{\mathrm{M}}X,\\
&+\colon\Omega^{\mathrm{M}}X\times\Omega^{\mathrm{M}}X\rightarrow\Omega^{\mathrm{M}}X
\end{align*}
and makes $\Omega^{\mathrm{M}}X$ a topological monoid and $P^{\mathrm{M}}X\rightarrow X$ a principal fibration.

For a pointed map $f\colon A\rightarrow B$ and a pointed space $X$, the cofiber sequence $A\rightarrow B\rightarrow C_f$ induces the homotopy fiber sequence
\begin{align*}
\Map_0(\Sigma A,X)\rightarrow\Map_0(C_f,X)\rightarrow\Map_0(B,X),
\end{align*}
where $\Sigma A$ is the reduced suspension of $A$ and $C_f$ is the reduced mapping cone of $f$.
The canonical pinch maps
\begin{align*}
C_f\rightarrow C_f\vee\Sigma A,\quad\Sigma A\rightarrow\Sigma A\vee\Sigma A
\end{align*}
give a homotopy associative action of $\Map_0(\Sigma A,X)$ on $\Map_0(C_f,X)$.
This also can be replaced by an associative one as follows.
Let us consider the pullback
\begin{align*}
\xymatrix{
\Map_0^{\mathrm{M}}(C_f,X) \ar[r] \ar[d] & P^{\mathrm{M}}\Map_0(A,X) \ar[d]^-{e} \\
\Map_0(B,X) \ar[r]_-{f^{\#}} & \Map_0(A,X).
}
\end{align*}
Then we have a principal fibration
\begin{align*}
\Omega^{\mathrm{M}}\Map_0(A,X)\rightarrow\Map_0^{\mathrm{M}}(C_f,X)\rightarrow\Map_0(B,X),
\end{align*}
which is naturally equivalent to the above homotopy fiber sequence.

\section{Certain maps between cubes}
\label{section_maps_between_cubes}
Consider the closed interval $[0,\infty]=[0,\infty)\cup\{\infty\}$ homeomorphic to the unit interval.
Define the following maps:
\begin{align*}
&\delta_k^t\colon[0,\infty]^{\times(i-1)}\rightarrow [0,\infty]^{\times i},\quad \delta_k^t(t_1,\ldots,t_{i-1})=(t_1,\ldots,t_{k-1},t,t_k,\ldots,t_{i-1}),\\
&\sigma_k:[0,\infty]^{\times(i-1)}\rightarrow[0,\infty]^{\times(i-2)},\quad \sigma_k(t_1,\ldots,t_{i-1})=\left\{
\begin{array}{ll}
(t_2,\ldots,t_{i-1}) & (k=1) \\
(t_1,\ldots,t_{k-2},\max\{t_{k-1},t_k\},t_{k+1},\ldots,t_{i-1}) & (1<k<i) \\
(t_1,\ldots,t_{i-2}) & (k=i)
\end{array}
\right.
\end{align*}
for $1\le k\le i$.

We also give a cubical partition of the cube $[0,\infty]^{\times(i-1)}$.
For a multi-index $\mathbf{i}=(i_1,\ldots,i_r)$ consisting of positive integers with $i_1+\cdots+i_r=i$ and $\ell\in[0,\infty)$, define the map
\begin{align*}
&\gamma_{\mathbf{i}}^\ell:[0,\infty]^{\times(r-1)}\times[0,\ell]^{\times(i_1-1)}\times\cdots\times[0,\ell]^{\times(i_r-1)}\rightarrow[0,\infty]^{\times(i-1)},\\
&\gamma_{\mathbf{i}}^\ell(\mathbf{t};\mathbf{s}_1,\ldots,\mathbf{s}_r)=(\mathbf{s}_1,t_1+\ell,\mathbf{s}_2,t_2+\ell,\ldots,t_{r-1}+\ell,\mathbf{s}_r)
\end{align*}
for $\mathbf{t}=(t_1,\ldots,t_{r-1})\in[0,\infty]^{\times(r-1)}$ and $\mathbf{s}_k\in[0,\ell]^{\times(i_k-1)}$.
For example, the images on $[0,\infty]^{\times 2}$ is depicted in Figure \ref{fig_gamma1-1}.
\begin{figure}
\centering
\includegraphics[width=5cm,clip]{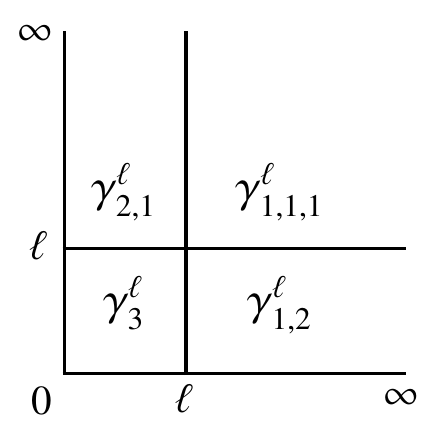}
\caption{}
\label{fig_gamma1-1}
\end{figure}

\begin{lem}
\label{lem_identities_gamma}
The following identities hold:
\begin{align*}
\begin{array}{cll}
\text{(i)}
	&\delta_{i_1+\cdots+i_{k-1}+m}^t(\gamma_{(i_1,\ldots,i_r)}^\ell(\mathbf{t};\mathbf{s}_1,\ldots,\mathbf{s}_r))
	&
	\\
&=\gamma_{(i_1,\ldots,i_{k-1},i_k+1,i_{k+1},\ldots,i_r)}^\ell(\mathbf{t};\mathbf{s}_1,\ldots,\mathbf{s}_{k-1},\delta_m^t(\mathbf{s}_k),\mathbf{s}_{k+1},\ldots,\mathbf{s}_r)
	&\text{for }1\le m\le i_k\text{ and }t\le\ell,
	\\
\text{(ii)}
	&\delta_{i_1+\cdots+i_{k-1}+m}^{\ell+t}(\gamma_{(i_1,\ldots,i_r)}^\ell(\mathbf{t};\mathbf{s}_1,\ldots,\mathbf{s}_r))
	&
	\\
&=\gamma_{(i_1,\ldots,i_{k-1},m-1,i_k-m,i_{k+1},\ldots,i_r)}^\ell(\delta_k^t(\mathbf{t});\mathbf{s}_1,\ldots,\mathbf{s}_{k-1},\sigma_m^{i_k-m}(\mathbf{s}_k),\sigma_1^{m-1}(\mathbf{s}_k),\mathbf{s}_{k+1},\ldots,\mathbf{s}_r)
	&\text{for }1\le m\le i_k\text{ and }t\ge 0,
	\\
\text{(iii)}
	&\sigma_{i_1+\cdots+i_{k-1}+m}(\gamma_{(i_1,\ldots,i_r)}^\ell(\mathbf{t};\mathbf{s}_1,\ldots,\mathbf{s}_r))
	&
	\\
&=\gamma_{(i_1,\ldots,i_{k-1},i_k-1,i_{k+1},\ldots,i_r)}^\ell(\mathbf{t};\mathbf{s}_1,\ldots,\mathbf{s}_{k-1},\sigma_m(\mathbf{s}_k),\mathbf{s}_{k+1},\ldots,\mathbf{s}_r)
	&\text{for }1\le m\le i_k,
	\\
\text{(iv)}
	&\gamma_{(i_1,\ldots,i_r)}^{\ell+\ell'}(
		\mathbf{u};
		\gamma_{(i_{1,1},\ldots,i_{1,q_1})}^{\ell}(
		\mathbf{t}_1;\mathbf{s}_{1,1},\ldots,\mathbf{s}_{1,q_1}
		),
		\ldots,
		\gamma_{(i_{r,1},\ldots,i_{r,q_r})}^{\ell}(
			\mathbf{t}_r;\mathbf{s}_{r,1},\ldots,\mathbf{s}_{r,q_r}
		)
	)
	&
	\\
&=\gamma_{(i_{1,1},\ldots,i_{1,q_1},\ldots,i_{r,1},\ldots,i_{r,q_r})}^{\ell}(
		\gamma_{(i_1,\ldots,i_r)}^{\ell'}(
			\mathbf{u};\mathbf{t}_1,\ldots,\mathbf{t}_r
		);
		\mathbf{s}_{1,1},\ldots,\mathbf{s}_{1,q_1},\ldots,\mathbf{s}_{r,1},\ldots,\mathbf{s}_{r,q_r}
		).
	&
\end{array}
\end{align*}
\end{lem}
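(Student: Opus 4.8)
The plan is to prove all four identities by direct computation in coordinates, since each is an equality of two explicitly defined maps between products of intervals. The only real preparation needed is a bookkeeping device: I would first record the \emph{global position} of every coordinate of $\gamma_{(i_1,\ldots,i_r)}^\ell(\mathbf{t};\mathbf{s}_1,\ldots,\mathbf{s}_r)$, namely that the block $\mathbf{s}_k$ occupies positions $i_1+\cdots+i_{k-1}+1$ through $i_1+\cdots+i_k-1$ and that the separator $t_k+\ell$ occupies position $i_1+\cdots+i_k$. Two further observations make the case analysis transparent: every block coordinate lies in $[0,\ell]$, whereas the inserted values and all separators lie in $[\ell,\infty]$, so a max taken between a block coordinate and a separator always selects the separator.

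With this in hand I would dispatch (i) and (iii) first. In (i) the hypothesis $t\le\ell$ forces the inserted coordinate to be a legitimate block coordinate, so $\delta$ acts entirely inside $\mathbf{s}_k$ and one only checks that insertion commutes with placing $\mathbf{s}_k$ into its block, which bumps $i_k$ to $i_k+1$. Identity (iii) is where the size dichotomy is genuinely used: when the collapse index $m$ is interior to the block, $\sigma$ reduces to a max of two block coordinates and matches $\sigma_m(\mathbf{s}_k)$ directly, but at the two boundary values $m=1$ and $m=i_k$ the global map $\sigma$ takes a max against an adjacent separator $t_{k-1}+\ell$ or $t_k+\ell$; the dichotomy guarantees the separator wins and the extreme block coordinate is deleted, exactly reproducing $\sigma_1(\mathbf{s}_k)$ or $\sigma_{i_k}(\mathbf{s}_k)$.

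Identity (ii) is the one I expect to be the main obstacle. Here the inserted value $\ell+t\ge\ell$ behaves as a new separator, so it must split the block $\mathbf{s}_k$ into its initial and terminal sub-tuples while $\delta_k^t$ inserts the matching entry into $\mathbf{t}$; the work is to confirm that the two $\sigma$-composites applied to $\mathbf{s}_k$ cut it at position $m$ into precisely these two pieces, and to keep the re-indexing of the refined multi-index consistent with the left-hand insertion. Finally, (iv) is proved by the same bookkeeping carried out twice: expanding both the inner $\gamma^\ell$'s inside the outer $\gamma^{\ell+\ell'}$ on the left and the single refined $\gamma^\ell$ over $\gamma_{(i_1,\ldots,i_r)}^{\ell'}(\mathbf{u};\mathbf{t}_1,\ldots,\mathbf{t}_r)$ on the right, then matching coordinates by tracking shifts; the only content is that an inner separator acquires total shift $\ell$ while an outer separator, being shifted by $\ell'$ inside the assembled separator-vector and then by a further $\ell$, acquires total shift $\ell+\ell'$. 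None of the steps is conceptually deep, so the difficulty is entirely one of disciplined indexing, with the delicate points being the boundary maxes in (iii) and the block-splitting in (ii).
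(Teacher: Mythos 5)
The paper states this lemma with no proof at all, treating the identities as routine coordinate checks, and your plan --- recording the global positions of the blocks $\mathbf{s}_k$ and separators $t_k+\ell$, using that block entries lie in $[0,\ell]$ while inserted values and separators lie in $[\ell,\infty]$ so that boundary maxes always select the separator --- is exactly the intended verification and is correct, including the identification of (ii) and the boundary cases of (iii) as the only delicate points. One byproduct of your dimension bookkeeping worth noting: the refined multi-index in (ii) must sum to $i+1$, so the entries $(m-1,\,i_k-m)$ printed in the statement should read $(m,\,i_k-m+1)$ (the sub-blocks $\sigma_m^{i_k-m}(\mathbf{s}_k)$ and $\sigma_1^{m-1}(\mathbf{s}_k)$ have $m-1$ and $i_k-m$ coordinates, hence correspond to multi-index entries $m$ and $i_k-m+1$), and likewise the inner $\gamma$ on the right of (iv) should carry the multi-index $(q_1,\ldots,q_r)$.
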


\section{The topological category of $A_n$-maps between topological monoids}
\label{section_category_of_A_n-maps}

In this section, we formulate the topological category of topological monoids and $A_n$-maps between them.
It is known that there exists a quasicategory of $A_n$-spaces and $A_n$-maps between them by \cite{Tsu15}, which is justified by the results of Boardman--Vogt in \cite{BV73}.
Hence, using the technique of quasicategories, one can obtain the topological category of $A_n$-spaces from this quasicategory.
But our approach is different from this and rather elementary.

As we have seen in Section \ref{section_mapping_spaces}, the source space of a mapping space should be a CW complex if one wants to consider a well-behaved mapping space.
Similarly, considering maps between topological monoids, we should consider \textit{grouplike} ones as target spaces.

\begin{dfn}
A topological monoid $G$ is said to be \textit{grouplike} if the monoid $\pi_0(G)$ is a group with respect to the multiplication induced from the monoid structure of $G$.
\end{dfn}

For example, any Moore based loop space $\Omega^{\mathrm{M}}X$ is grouplike.

The following lemma is proved by induction on the dimension of the skeletons of $A$.

\begin{lem}
\label{lem_grouplike_inversion}
Let $G$ be a grouplike topological monoid and $A$ a pointed space of the pointed homotopy type of a CW complex.
Then, for any pointed map $\alpha\colon A\rightarrow G$, there exists a map $\alpha'\colon A\rightarrow G$ such that the maps
\begin{align*}
A\rightarrow G,\quad a\mapsto\alpha(a)\alpha'(a)\quad\text{and}\quad a\mapsto\alpha'(a)\alpha(a)
\end{align*}
are pointed homotopic to the constant map.
\end{lem}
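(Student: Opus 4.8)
The plan is to reduce the statement to the assertion that left multiplication is invertible up to homotopy, and then to extract $\alpha'$ by a cellular induction on $A$ exactly as the remark before the lemma suggests. First I would record the elementary consequence of grouplikeness that for every $x\in G$ the left translation $L_x\colon G\to G$, $g\mapsto xg$, is a homotopy equivalence. Indeed, since $\pi_0(G)$ is a group, one may choose $\bar x\in G$ with $[\bar x]=[x]^{-1}$; then $x\bar x$ and $\bar x x$ both lie in the identity component, so a path from $x\bar x$ (resp. $\bar x x$) to the unit produces a homotopy $L_xL_{\bar x}=L_{x\bar x}\simeq L_e=\mathrm{id}$ (resp. $L_{\bar x}L_x\simeq\mathrm{id}$), whence $L_x$ is a homotopy equivalence. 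Consequently the shearing map
\[
s\colon G\times G\to G\times G,\qquad s(x,y)=(x,xy),
\]
which covers the identity on the first factor and restricts to $L_x$ on the fibre $\{x\}\times G$ of the projection, is a fibrewise weak equivalence between the two trivial Hurewicz fibrations $\mathrm{pr}_1$; comparing the long exact homotopy sequences via the five lemma shows that $s$ is a weak homotopy equivalence. I emphasize that this step uses only that $G$ is grouplike and, crucially, does not require $G$ to have the homotopy type of a CW complex.

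Next I would pass to homotopy classes of maps out of $A$. Since $A$ has the pointed homotopy type of a CW complex and $s$ is a weak equivalence, the induced map $s_*\colon[A,G\times G]_*\to[A,G\times G]_*$ is a bijection; this is precisely the point at which one argues by induction on the skeleta of $A$, extending a map together with a homotopy over the cells one dimension at a time, keeping the basepoint fixed throughout. Writing $[A,G\times G]_*=[A,G]_*\times[A,G]_*$ and unwinding $s_*([\beta_1],[\beta_2])=([\beta_1],[\beta_1\cdot\beta_2])$, where $\cdot$ denotes the pointwise product induced by the multiplication of $G$, the bijectivity of $s_*$ shows that for every $[\alpha]$ the assignment $[\beta]\mapsto[\alpha\cdot\beta]$ is a bijection of $[A,G]_*$. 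Choosing $\alpha'$ in the preimage of the class of the constant map then gives $\alpha\cdot\alpha'$ pointed null-homotopic.

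Finally I would upgrade this right inverse to a two-sided one. The set $[A,G]_*$ is a genuine monoid under the pointwise product with unit the class of the constant map, since this product is strictly associative and unital. In a monoid in which every element has a right inverse, each right inverse is automatically two-sided and the monoid is therefore a group; applied here this yields $\alpha'\cdot\alpha$ pointed null-homotopic as well, which is exactly the assertion of the lemma. (Alternatively one repeats the argument with the shearing $(x,y)\mapsto(x,yx)$ to produce a left inverse and then checks that the two classes coincide.)

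I expect the main obstacle to be the first step: establishing that the shearing map is a weak equivalence from grouplikeness alone, without a CW structure on $G$, so that Proposition \ref{prp_equiv_map_sp} cannot be applied to $G$ itself. The translation-by-a-path trick, combined with the fibrewise comparison of fibrations, is what makes this work; everything afterwards is the standard cellular induction and a formal monoid argument.
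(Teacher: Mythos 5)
Your proof is correct, and it is essentially the approach the paper indicates: the paper only remarks that the lemma ``is proved by induction on the dimension of the skeletons of $A$,'' and that cellular induction is exactly the engine of your argument, here conveniently packaged as the statement that the shearing map $(x,y)\mapsto(x,xy)$ is a weak equivalence and hence induces a bijection on $[A,G\times G]_*$. The fibrewise comparison establishing that weak equivalence, and the formal right-inverse-implies-two-sided-inverse argument at the end, are both sound.
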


This lemma leads the following proposition.

\begin{prp}
\label{prp_grouplike_hofib_map}
Let $i\colon A\to B$ be an inclusion having the homotopy extension property between spaces of the homotopy types of CW complexes and $\alpha\colon B\to G$ a map to a grouplike topological monoid.
Suppose that $\Map(A,G)$ and $\Map(B,G)$ are pointed at the maps $\alpha|_A$ and $\alpha$, respectively.
Then the inclusion of the homotopy fiber of $i^{\#}\colon\Map(B,G)\to\Map(A,G)$ is equivalent to the map $\Map_0(B/A,G)\to\Map(B,G)$ given by the multiplication $\beta\mapsto\alpha\beta$.
More precisely, the map $i^{\#}$ is a principal homotopy fibration with fiber $\Map_0(B/A,G)$.
\end{prp}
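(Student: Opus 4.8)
The plan is to realize $i^{\#}$ as a Hurewicz fibration, identify its strict fiber over the basepoint $\alpha|_A$ with the homotopy fiber, and then compare that fiber with $\Map_0(B/A,G)$ via multiplication by $\alpha$, using Lemma~\ref{lem_grouplike_inversion} to produce a homotopy inverse to this multiplication.

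First I would use that $i\colon A\to B$ has the homotopy extension property, so there is a retraction $r\colon B\times[0,1]\to B\times\{0\}\cup A\times[0,1]$. It is standard that restriction along a cofibration is a Hurewicz fibration, so $i^{\#}\colon\Map(B,G)\to\Map(A,G)$ is a fibration and its homotopy fiber over $\alpha|_A$ is the strict fiber $F=\{f\in\Map(B,G)\mid f|_A=\alpha|_A\}$, with the inclusion of the homotopy fiber being $F\hookrightarrow\Map(B,G)$. Pointwise multiplication makes each of these mapping spaces a topological monoid and $i^{\#}$ a homomorphism; the fiber over the unit $c_e$ (the constant map at the identity of $G$) is exactly $\Map_0(B/A,G)$, regarded as the maps $B\to G$ that are trivial on $A$. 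The multiplication $\mu\colon\beta\mapsto\alpha\beta$ carries this unit fiber into $F$, so it factors as $\tilde\mu\colon\Map_0(B/A,G)\to F$ followed by $F\hookrightarrow\Map(B,G)$, and it remains to prove that $\tilde\mu$ is a weak equivalence.

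This is where Lemma~\ref{lem_grouplike_inversion} is used. Applying it to $\alpha\colon B\to G$, which is legitimate because $B$ has the homotopy type of a CW complex, yields a map $\alpha'\colon B\to G$ and pointed homotopies $H\colon\alpha\alpha'\simeq c_e$ and $H'\colon\alpha'\alpha\simeq c_e$. I would then construct a candidate inverse $\tilde\nu\colon F\to\Map_0(B/A,G)$ as follows. For $f\in F$ the product $\alpha'f$ restricts on $A$ to $\alpha'|_A\,\alpha|_A$, which $H'$ connects to $c_e$; defining a map on $B\times\{0\}\cup A\times[0,1]$ by $\alpha'f$ on $B\times\{0\}$ and $H'$ on $A\times[0,1]$ (these agree on the overlap) and composing with $r$ extends this deformation over $B\times[0,1]$, whose value at time $1$ is a map $\tilde\nu(f)\colon B\to G$ trivial on $A$, hence a point of $\Map_0(B/A,G)$. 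Since this is a single fixed formula in $f$, the assignment is continuous.

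It then remains to check $\tilde\mu\tilde\nu\simeq\mathrm{id}_F$ and $\tilde\nu\tilde\mu\simeq\mathrm{id}$, which I would do with the homotopies $H$, $H'$ and the unitality of multiplication; the multiplication action of the grouplike monoid $\Map_0(B/A,G)$ on $\Map(B,G)$ then exhibits $i^{\#}$ as a principal fibration. The main obstacle is exactly this verification together with the construction of $\tilde\nu$: because $\alpha'$ is merely a homotopy inverse of $\alpha$, the naive formulas do not respect the boundary conditions $f|_A=\alpha|_A$ defining $F$ and $\beta|_A=e$ defining $\Map_0(B/A,G)$ on the nose, so every homotopy must be kept fiberwise. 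The homotopy extension property, through the retraction $r$, is precisely what lets one correct these restrictions to $A$ continuously at every stage, and arranging the homotopies so that they never leave the relevant fibers is the delicate point of the argument.
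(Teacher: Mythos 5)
Your proposal is correct in substance and rests on the same two pillars as the paper's proof: the homotopy extension property makes $i^{\#}$ a Hurewicz fibration whose strict fiber over the constant map is exactly $\Map_0(B/A,G)$, and Lemma~\ref{lem_grouplike_inversion} supplies the homotopy inverse $\alpha'$ needed to compare the fiber over $\alpha|_A$ with the fiber over the constant. Where you diverge is in how the comparison is concluded. The paper observes that left multiplication by $\alpha$ gives self-maps of $\Map(B,G)$ and $\Map(A,G)$ that commute strictly with $i^{\#}$ (since $(\alpha\beta)|_A=\alpha|_A\,\beta|_A$) and are homotopy equivalences by Lemma~\ref{lem_grouplike_inversion}; the whole statement is then transported from the basepoint $0$, where it is obvious, to the basepoint $\alpha$. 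In that formulation the weak equivalence of fibers follows formally (map of fibrations, five lemma on the long exact sequences), and no fiberwise homotopy ever has to be constructed. You instead build an explicit inverse $\tilde\nu\colon F\to\Map_0(B/A,G)$ and must then verify $\tilde\mu\tilde\nu\simeq\mathrm{id}$ and $\tilde\nu\tilde\mu\simeq\mathrm{id}$ through homotopies constrained to stay inside $F$ and $\Map_0(B/A,G)$; as you note yourself, the naive homotopies (e.g.\ $H_t\cdot f$, whose restriction to $A$ is $(H_t)|_A\cdot\alpha|_A\neq\alpha|_A$ for intermediate $t$) leave the fibers and must be corrected by repeated applications of HEP. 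This can be carried through, so there is no error, but it is exactly the bookkeeping that the paper's ``multiply the whole square by $\alpha$ and reduce to $\alpha=0$'' argument is designed to avoid; I would recommend restructuring your final step along those lines rather than completing the explicit inverse.
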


\begin{proof}
If $\alpha$ is the constant map $0$, then the proposition is obvious.
By Lemma \ref{lem_grouplike_inversion}, there is a map $\alpha'\colon B\to G$ such that $\alpha\alpha'$ and $\alpha'\alpha$ is homotopic to the constant map.
Then there is a commutative diagram
\[
\xymatrix{
	\Map(B,G) \ar[r]^-{i^{\#}} \ar[d]_-{\alpha\cdot} &
		\Map(A,G) \ar[d]^-{(\alpha|_A)\cdot} \\
	\Map(B,G) \ar[r]_-{i^\#} &
		\Map(A,G)
}
\]
such that the vertical maps are given by multiplying $\alpha$ from the left and are homotopy equivalences.
By these equivalences, the proposition follows from the case when $\alpha=0$.
\end{proof}

Now, we define $A_n$-maps.
Our definition is slightly different from Stasheff's original one in \cite{Sta63b}.
Since we want to make the composition of $A_n$-maps associative and unital, the Moore type definition is adopted.
 
\begin{dfn}
Let $G$ and $G'$ be topological monoids and $f\colon G\to G'$ a pointed map.
A family $\{f_i\colon [0,\infty]^{\times(i-1)}\times G^{\times i}\to G'\}_{i=1}^n$ of maps is said to be an \textit{$A_n$-form} of size $\ell\in[0,\infty)$ if the following conditions hold:
\begin{align*}
\begin{array}{cll}
\text{(i)} &
f_1=f, &
\\
\text{(ii)} &
f_i(\delta_k^0(\mathbf{t});g_1,\ldots,g_i)=f_{i-1}(\mathbf{t};g_1,\ldots,g_kg_{k+1},\ldots,g_i),
\\
\text{(iii)} &
f_i(\delta_k^t(\mathbf{t});g_1,\ldots,g_i)=f_{k-1}(\sigma_k^{i-k}(\mathbf{t});g_1,\ldots,g_k)f_{i-k}(\sigma_1^{k-1}(\mathbf{t});g_{k+1},\ldots,g_i) & \text{for }t\ge\ell,
\\
\text{(iv)} &
f_i(\mathbf{t};g_1,\ldots,g_{k-1},*,g_{k+1},\ldots,g_i)=f_{i-1}(\sigma_k(\mathbf{t});g_1,\ldots,g_{k-1},g_{k+1},\ldots,g_i). &
\end{array}
\end{align*}
The triple $f=(f,\{f_i\}_{i=1}^n,\ell)$ is called an \textit{$A_n$-map}.
In particular, if the size $\ell$ of the $A_n$-form of $f$ is 0, $f$ is a homomorphism.
We denote the space of $A_n$-maps between topological monoids $G$ and $G'$ by $\mathcal{A}_n(G,G')$.
\end{dfn}

We consider that an $A_1$-map is a pair $(f,\ell)$ consisting of a pointed map $f$ and a meaningless number $\ell$.
Unless otherwise stated, we assign a homomorphism the $A_n$-form of size $0$.

\begin{dfn}\label{dfn_composition}
Let $G$, $G'$ and $G''$ be topological monoids.
For $A_n$-maps $f=(f,\{f_i\}_{i=1}^n,\ell):G\to G'$ and $f'=(f',\{f'_i\}_{i=1}^n,\ell'):G'\to G''$, the composition $f'\circ f=(f'\circ f,\{F_i\}_{i=1}^n,\ell+\ell')$ is defined as follows.
For a multi-index $\mathbf{i}=(i_1,\ldots,i_r)$ with $i_1+\cdots+i_r=i$, $\mathbf{t}\in[0,\infty]^{\times(r-1)}$, $\mathbf{s}_k\in[0,\ell]^{\times(i_k-1)}$ and $\mathbf{g}_k\in G^{\times i_k}$, define
\begin{align*}
F_i(\gamma_{\mathbf{i}}^{\ell}(\mathbf{t};\mathbf{s}_1,\ldots,\mathbf{s}_r);\mathbf{g}_1,\ldots,\mathbf{g}_r)=f'_r(\mathbf{t};f_{i_1}(\mathbf{s}_1;\mathbf{g}_1),\ldots,f_{i_r}(\mathbf{s}_r;\mathbf{g}_r)).
	\end{align*}
\end{dfn}

Using the identities (i), (ii) and (iii) in Lemma \ref{lem_identities_gamma}, it is verified that $\{F_i\}$ is an $A_n$-form on $f'\circ f$ of size $\ell+\ell'$.

This composition defines a continuous map
\begin{align*}
	\circ:\mathcal{A}_n(G',G'')\times\mathcal{A}_n(G,G')\to\mathcal{A}_n(G,G'').
\end{align*}
By the identity (iv) in Lemma \ref{lem_identities_gamma}, it is an associative and unital operation, where the identity in $\mathcal{A}_n(G,G)$ is given by the identity map $\mathrm{id}_G$.
Thus we obtain a topological category as follows.

\begin{dfn}
\label{dfn_category_of_A_n}
The topological category $\mathcal{A}_n$ consists of topological monoids as objects and $\mathcal{A}_n(G,G')$ as a morphism space between each pair $G$ and $G'$, where the composition is given as Definition \ref{dfn_composition}.
\end{dfn}

If the underlying map $f_1$ of $f\in\mathcal{A}_n(G,G')$ is a weak equivalence, $f$ is said to be a \textit{weak $A_n$-equivalence}.
The \textit{homotopy category} $\pi_0\mathcal{A}_n$ of $\mathcal{A}_n$ is the category whose objects are the same as $\mathcal{A}_n$ and the morphism set between $G$ and $G'$ is defined by $(\pi_0\mathcal{A}_n)(G,G')=\pi_0(\mathcal{A}_n(G,G'))$. 

\begin{dfn}
Let $G$ and $G'$ be topological monoids and $f=(f,\{f_i\}_{i=1}^n,\ell):G\to G'$ an $A_n$-map.
For a left $G$-space $X$, a left $G'$-space $X'$ and a map $\phi:X\to X'$, a family $\{\phi_i:[0,\infty]^i\times G^i\to G'\}_{i=0}^n$ of maps is said to be an \textit{$A_n$-form} if the following conditions hold:
\begin{align*}
\begin{array}{cll}
\text{(i)} &
\phi_0=\phi, &
\\
\text{(ii)} &
\phi_i(\delta_k^0(\mathbf{t});g_1,\ldots,g_i,x)=\left\{
	\begin{array}{ll}
		\phi_{i-1}(\mathbf{t};g_1,\ldots,g_kg_{k+1},\ldots,g_i,x) & (k<i) \\
		\phi_{i-1}(\mathbf{t};g_1,\ldots,g_{i-1},g_ix) & (k=i),
	\end{array}
\right.
\\
\text{(iii)} &
\phi_i(\delta_k^t(\mathbf{t});g_1,\ldots,g_i,x)=f_k(\sigma_k^{i-k+1}(\mathbf{t});g_1,\ldots,g_k)\phi_{i-k}(\sigma_1^{k-1}(\mathbf{t});g_{k+1},\ldots,g_i,x)
& \text{for }t\ge\ell,
\\
\text{(iv)} &
f_i(\mathbf{t};g_1,\ldots,g_{k-1},*,g_{k+1},\ldots,g_i,x)=f_{i-1}(\sigma_k(\mathbf{t});g_1,\ldots,g_{k-1},g_{k+1},\ldots,g_i,x). &
\end{array}
\end{align*}
The quintuple $\phi=(f,\{f_i\}_{i=1}^n,\ell,\phi,\{\phi_i\}_{i=1}^n)$ is called an \textit{$A_n$-equivariant map} through the $A_n$-map $f=(f,\{f_i\}_{i=1}^n,\ell)$.
In particular, if the size $\ell$ of the $A_n$-form of $f$ is 0, $\phi$ is an ordinary equivariant map.
We denote the space of $A_n$-equivariant maps between a left $G$-space $X$ and a left $G'$-space $X'$ by $\mathcal{A}_n^{\mathrm{L}}((G,X),(G',X'))$.
Similarly, we define $A_n$-equivariant maps between a right $G$-space $X$ and a right $G'$-space $X'$, and the corresponding mapping space is denoted by $\mathcal{A}_n^{\mathrm{R}}((X,G),(X',G'))$.
\end{dfn}
\begin{dfn}
The topological categories $\mathcal{A}_n^{\mathrm{L}}$ and $\mathcal{A}_n^{\mathrm{R}}$ are defined as well as $\mathcal{A}_n$ in Definition \ref{dfn_category_of_A_n}.
We define the projection functors $\mathcal{A}_n^{\mathrm{L}}\to\mathcal{A}_n$ and $\mathcal{A}_n^{\mathrm{R}}\to\mathcal{A}_n$ by $(G,X)\mapsto G$ and $(X,G)\mapsto G$, respectively.
\end{dfn}

In the following, we will prove various properties about the mapping spaces of $\mathcal{A}_n$.
It will be convenient to consider the deformation retract
\begin{align*}
\mathcal{A}_n^1(G,G')\subset\mathcal{A}_n(G,G')
\end{align*}
consisting of $A_n$-forms of size $\ge 1$.
Assume that $G$ is well-pointed and of the homotopy type of a CW complex, and that $G'$ is grouplike.
Note that there is a homotopy pullback diagram
\[
\xymatrix{
	\mathcal{A}_n^1(G,G') \ar[r] \ar[d] &
		\Map([0,1]^{\times(n-1)}\times G^{\times n},G') \ar[d] \\
	\mathcal{A}_{n-1}^1(G,G') \ar[r] &
		\Map(\partial[0,1]^{\times(n-1)}\times G^{\times n}\cup[0,1]^{\times(n-1)}\times T_nG,G'),
}
\]
where the left vertical map is the forgetful map and $T_nG\subset G^{\times n}$ denotes the fat wedge of $G$:
\begin{align*}
T_nG:=\{(g_1,\ldots,g_n)\in G^{\times n}\mid g_k=*\text{ for some }k\}.
\end{align*}
Here, the homotopy fiber is determined as $\Map_0(\Sigma^{n-1}G^{\wedge n},G')$ by using Proposition \ref{prp_grouplike_hofib_map} and does not depend on the choice of the basepoint of $\mathcal{A}_n^1(G,G')$.
Moreover, the homotopy fiber has an action
\begin{align*}
\mathcal{A}_n^1(G,G')\times\Map_0(\Sigma^{n-1}G^{\wedge n},G')\rightarrow\mathcal{A}_n^1(G,G')
\end{align*}
defined as follows, which gives the structure of a principal fibration.
For $\alpha\in\Map_0(\Sigma^{n-1}G^{\wedge n},G')$ and $f=(f,\{f_i\}_i,\ell)\in\mathcal{A}_n^1(G,G')$, the action is defined by $f\cdot\alpha:=(f,\{f_i\cdot\alpha\}_i,\ell)$ such that
\begin{align*}
(f_i\cdot\alpha)(t_1,\ldots,t_{i-1};\mathbf{g})=\left\{
\begin{array}{ll}
f_i(t_1,\ldots,t_{i-1};\mathbf{g}) & (i<n) \\
f_n(t_1,\ldots,t_{n-1};\mathbf{g})\alpha(\min\{t_1,1\},\ldots,\min\{t_{n-1},1\};\mathbf{g}) & (i=n)
\end{array}
\right. ,
\end{align*}
where $\alpha$ is considered as the map $[0,1]^{\times(n-1)}\times G^{\times n}\rightarrow G$.

\begin{prp}
\label{prp_equiv_mappingsp_A_n}
Let $G,G',H,H'$ be topological monoids.
Suppose that $G$ and $H$ are well-pointed and homotopy equivalent to CW complexes.
Then the compositions of weak $A_n$-equivalences $f=(f,\{f_i\},\ell):G\rightarrow H$ and $f'=(f',\{f_i'\},\ell'):G'\rightarrow H'$ induce the following weak equivalences:
\begin{align*}
f^{\#}&:\mathcal{A}_n(H,G')\xrightarrow{\simeq}\mathcal{A}_n(G,G'),\\
f'_{\#}&:\mathcal{A}_n(H,G')\xrightarrow{\simeq}\mathcal{A}_n(H,H').
\end{align*}
\end{prp}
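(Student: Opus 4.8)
The plan is to prove the two weak equivalences separately, and to reduce each to a statement about ordinary mapping spaces by using the inductive tower that was set up just before the statement. I would first treat the case $n=1$ as the base of an induction on $n$. For $n=1$ an $A_1$-map is just a pair $(f,\ell)$ with $f$ a pointed map, so $\mathcal{A}_1(G,G')\simeq\Map_0(G,G')$, and the two maps $f^{\#}$ and $f'_{\#}$ become the maps on pointed mapping spaces induced by $f$ and $f'$. Since $G,H$ are well-pointed and of CW homotopy type and $f$ is a (pointed) weak equivalence, $f^{\#}\colon\Map_0(H,G')\to\Map_0(G,G')$ is a weak equivalence by Proposition \ref{prp_equiv_map_sp} together with the following Remark. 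For $f'_{\#}$ the map $f'$ need not be a weak equivalence on the target a priori, but it is a weak $A_1$-equivalence, i.e.\ $f'_1=f'$ is a weak equivalence, so again Proposition \ref{prp_equiv_map_sp} applies. This settles $n=1$.

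For the inductive step I would exploit the homotopy pullback square displayed just before the statement, after first replacing $\mathcal{A}_n(G,G')$ by its deformation retract $\mathcal{A}_n^1(G,G')$ of $A_n$-forms of size $\ge 1$ (and similarly everywhere), which is harmless since the retraction is natural in all variables. That square exhibits $\mathcal{A}_n^1(G,G')$ as a principal fibration over $\mathcal{A}_{n-1}^1(G,G')$ with fiber $\Map_0(\Sigma^{n-1}G^{\wedge n},G')$, the fiber being identified via Proposition \ref{prp_grouplike_hofib_map} (using that $G'$ is grouplike). The induced map $f^{\#}$ (resp.\ $f'_{\#}$) is compatible with this fibration structure: it covers the corresponding map on $\mathcal{A}_{n-1}^1$ and acts on fibers as the map $\Map_0(\Sigma^{n-1}H^{\wedge n},G')\to\Map_0(\Sigma^{n-1}G^{\wedge n},G')$ induced by $\Sigma^{n-1}f^{\wedge n}$ (resp.\ as $(f')_{\#}$ on $\Map_0(\Sigma^{n-1}G^{\wedge n},H')$). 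I would then run the five lemma on the long exact sequences of homotopy groups of the two fibrations, or equivalently invoke that a map of fibrations inducing weak equivalences on base and fiber is a weak equivalence on total spaces.

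The two fiber-level inputs are exactly where the hypotheses are used. On the base, the map $\mathcal{A}_{n-1}^1(H,G')\to\mathcal{A}_{n-1}^1(G,G')$ (resp.\ the $G'\to H'$ version) is a weak equivalence by the inductive hypothesis. On the fiber, for $f^{\#}$ I need $\Sigma^{n-1}f^{\wedge n}\colon\Sigma^{n-1}G^{\wedge n}\to\Sigma^{n-1}H^{\wedge n}$ to induce a weak equivalence on $\Map_0(-,G')$; since $f$ is a pointed weak equivalence between well-pointed CW-type spaces, $f^{\wedge n}$ and hence $\Sigma^{n-1}f^{\wedge n}$ is a pointed weak equivalence between CW-type spaces (here well-pointedness guarantees the smash is the homotopy smash), so Proposition \ref{prp_equiv_map_sp} gives what I want. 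For $f'_{\#}$ the fiber map is $(f'_1)_{\#}\colon\Map_0(\Sigma^{n-1}G^{\wedge n},G')\to\Map_0(\Sigma^{n-1}G^{\wedge n},H')$, a weak equivalence again by Proposition \ref{prp_equiv_map_sp} since $f'_1$ is a weak equivalence.

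The main obstacle I anticipate is not the five-lemma bookkeeping but the verification that $f^{\#}$ and $f'_{\#}$ genuinely respect the principal-fibration structure, i.e.\ that the square built from the pullback diagram for the source spaces maps to the one for the target spaces strictly (or at least up to natural homotopy) compatibly with the forgetful maps and the fiber identifications. Concretely I must check that precomposition with $f$ sends an $A_n$-form of size $\ge1$ on $H$ to one on $G$, that the size is preserved, and that under the identification of the fiber with $\Map_0(\Sigma^{n-1}(-)^{\wedge n},G')$ the action of the fiber is carried to the action of the fiber by $\Sigma^{n-1}f^{\wedge n}$ — this is a direct but slightly tedious unraveling of the explicit action formula displayed before the statement, together with the cubical identities of Lemma \ref{lem_identities_gamma}. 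Once this naturality is in place, the weak equivalence on total spaces is immediate.
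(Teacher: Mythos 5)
Your proposal is correct and follows essentially the same strategy as the paper: pass to the deformation retract $\mathcal{A}_n^1$, settle $n=1$ by Proposition \ref{prp_equiv_map_sp}, and induct up the tower $\mathcal{A}_n^1\to\mathcal{A}_{n-1}^1$ using the square displayed before the statement. The one place you diverge is in how the inductive step is closed. You identify the fiber as $\Map_0(\Sigma^{n-1}G^{\wedge n},G')$, verify that $f^{\#}$ respects the principal action, and run the five lemma; the paper instead regards $\mathcal{A}_n^1(G,G')$ as the homotopy pullback of the cospan
$\mathcal{A}_{n-1}^1(G,G')\to\Map(\partial[0,1]^{\times(n-1)}\times G^{\times n}\cup[0,1]^{\times(n-1)}\times T_nG,G')\leftarrow\Map([0,1]^{\times(n-1)}\times G^{\times n},G')$
and invokes homotopy invariance of homotopy pullbacks, the middle and right vertical maps being $f_1^{\#}$ (weak equivalences by Proposition \ref{prp_equiv_map_sp}, since $f_1$ is a weak equivalence of CW-type spaces) and the left square commuting up to the homotopy supplied by $\{f_i\}$. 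This buys exactly what you flag as the main remaining obstacle: no identification of the fiber is needed, no verification that the principal action is carried to the action through $\Sigma^{n-1}f^{\wedge n}$ (which, because composition reparametrizes the cube via $\gamma^{\ell}_{\mathbf{i}}$, holds only up to homotopy and would require an extra argument), and no separate treatment of possibly empty fibers over components of $\mathcal{A}_{n-1}^1$ where an $A_{n-1}$-form fails to extend to an $A_n$-form. Two small points in your write-up: the size is not preserved by $f^{\#}$ but increased by $\ell$ (which still lands in $\mathcal{A}_n^1$), and for $f'_{\#}$ the fiber map is postcomposition with $f'_1$, for which only the grouplike hypothesis on the targets together with Proposition \ref{prp_equiv_map_sp} is needed. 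Your route works, but these checks carry the real content of your version; the cospan formulation absorbs them all into the single statement that objectwise weak equivalences of cospans induce weak equivalences of homotopy pullbacks.
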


\begin{proof}
It is sufficient to prove that the maps
\begin{align*}
f^{\#}&:\mathcal{A}_n^1(H,G')\xrightarrow{\simeq}\mathcal{A}_n^1(G,G'),\\
f'_{\#}&:\mathcal{A}_n^1(H,G')\xrightarrow{\simeq}\mathcal{A}_n^1(H,H')
\end{align*}
are weak equivalences.
For $n=1$, it follows from Proposition \ref{prp_equiv_map_sp}.
Suppose that the claim is true for $n-1$.
The map $f^{\#}\colon\mathcal{A}_n^1(H,G')\to\mathcal{A}_n^1(G,G')$ is recognized as the map obtained by taking the homotopy pullback along the horizontal direction of the homotopy commutative diagram
\[
\xymatrix{
	\mathcal{A}_{n-1}^1(H,G') \ar[r] \ar[d]_-{f^{\#}} &
		\Map(\partial[0,1]^{\times(n-1)}\times H^{\times n}\cup[0,1]^{\times(n-1)}\times T_nH,G') \ar[d]^-{f_1^{\#}} &
		\Map([0,1]^{\times(n-1)}\times H^{\times n},G') \ar[l] \ar[d]^-{f_1^{\#}} \\
	\mathcal{A}_{n-1}^1(G,G') \ar[r] &
		\Map(\partial[0,1]^{\times(n-1)}\times G^{\times n}\cup[0,1]^{\times(n-1)}\times T_nG,G') &
		\Map([0,1]^{\times(n-1)}\times G^{\times n},G') \ar[l]
}
\]
such that the left square commutes only up to the homotopy defined by $\{f_i\}$.
Since the vertical maps are weak equivalences, the resulting map $f^{\#}\colon\mathcal{A}_n^1(H,G')\to\mathcal{A}_n^1(G,G')$ is a weak equivalence as well.
 
For $f'_{\#}$, the claim similarly follows from the diagram
\[
\xymatrix{
	\mathcal{A}_{n-1}^1(H,G') \ar[r] \ar[d]_-{f'_{\#}} &
		\Map(\partial[0,1]^{\times(n-1)}\times H^{\times n}\cup[0,1]^{\times(n-1)}\times T_nH,G') \ar[d]^-{f_1'{}_{\#}} &
		\Map([0,1]^{\times(n-1)}\times H^{\times n},G') \ar[l] \ar[d]^-{f_1'{}_{\#}} \\
	\mathcal{A}_{n-1}^1(H,H') \ar[r] &
		\Map(\partial[0,1]^{\times(n-1)}\times H^{\times n}\cup[0,1]^{\times(n-1)}\times T_nH,H') &
		\Map([0,1]^{\times(n-1)}\times H^{\times n},H'). \ar[l]
}
\]
\end{proof}

\begin{cor}
\label{cor_inverse_of_A_n-equivalence}
If $G$ and $G'$ are well-pointed topological monoids of the homotopy types of CW complexes, then every weak $A_n$-equivalence $G\rightarrow G'$ has an inverse in the homotopy category $\pi_0\mathcal{A}_n$.
\end{cor}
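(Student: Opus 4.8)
The plan is to deduce the statement purely formally from Proposition \ref{prp_equiv_mappingsp_A_n}, together with the elementary fact that a morphism admitting both a left and a right inverse in a category is an isomorphism. Fix a weak $A_n$-equivalence $f\colon G\to G'$. Since both $G$ and $G'$ are well-pointed and homotopy equivalent to CW complexes, $f$ is allowed to occupy either of the two equivalence slots of Proposition \ref{prp_equiv_mappingsp_A_n}.

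First I would record two consequences. Applying the proposition with $f$ as the precomposing equivalence and with target $G$ shows that $f^{\#}\colon\mathcal{A}_n(G',G)\to\mathcal{A}_n(G,G)$ is a weak equivalence; applying it with $f$ as the postcomposing equivalence and with source $G'$ shows that $f_{\#}\colon\mathcal{A}_n(G',G)\to\mathcal{A}_n(G',G')$ is a weak equivalence. Passing to path-components, both induced maps become bijections.

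Next, since composition in $\mathcal{A}_n$ is continuous, on path-components $\pi_0 f^{\#}$ sends $[g]$ to $[g]\circ[f]$ and $\pi_0 f_{\#}$ sends $[h]$ to $[f]\circ[h]$, where $\circ$ is the composition in $\pi_0\mathcal{A}_n$. As $\pi_0 f^{\#}$ is surjective, there is a class $[g]\in(\pi_0\mathcal{A}_n)(G',G)$ with $[g]\circ[f]=[\mathrm{id}_G]$, a left inverse; as $\pi_0 f_{\#}$ is surjective, there is $[h]$ with $[f]\circ[h]=[\mathrm{id}_{G'}]$, a right inverse. The standard computation $[g]=[g]\circ[\mathrm{id}_{G'}]=[g]\circ[f]\circ[h]=[\mathrm{id}_G]\circ[h]=[h]$ then shows that $[g]=[h]$ is a two-sided inverse of $[f]$ in $\pi_0\mathcal{A}_n$.

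I expect no serious obstacle here: the substantive content is already contained in Proposition \ref{prp_equiv_mappingsp_A_n}, and what remains is the formal manipulation above. The only points requiring care are verifying that $f$ may legitimately be used both as a pre- and as a postcomposition equivalence (which is precisely why one needs \emph{both} $G$ and $G'$ to be well-pointed and CW-like, so that the chosen source resp.\ target monoid is admissible in the proposition), and checking that the maps induced on $\pi_0$ are indeed the composition operations of the homotopy category, which follows from the continuity of composition in $\mathcal{A}_n$.
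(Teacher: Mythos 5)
Your argument is correct and is precisely the deduction the paper intends: the corollary is stated as an immediate consequence of Proposition \ref{prp_equiv_mappingsp_A_n}, and your use of $f^{\#}$ and $f_{\#}$ to produce a left and a right inverse on $\pi_0$, followed by the standard identification $[g]=[h]$, is the evident (and essentially unique) way to make that implication explicit. The hypotheses are applied correctly in both slots, so nothing further is needed.
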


\section{Subdivided bar construction functor}
\label{section_bar_construction}
Denote the $i$-dimensional simplex by $\Delta^i$.
Let $X$ be a right $G$-space and $Y$ be a left $G$-space for a topological monoid $G$.
Then, the \textit{$n$-th bar construction} $B_n(X,G,Y)$ is defined as
\[
	B_n(X,G,Y)=\left.\left(\coprod_{0\le i\le n}\Delta^i\times X\times G^{\times i}\times Y\right)\right/\sim
\]
for an appropriate simplicial relation $\sim$.

For our use, it is convenient to replace $\Delta^i$ by the cubical subdivision $\mathcal{Q}_i$ such that
\[
	\mathcal{Q}_i=\{ (t_0,\ldots,t_i)\in [0,\infty]^{\times(i+1)}\mid t_k=\infty\text{ for some }k \}.
\]
The maps $\delta_k^0$, $\sigma_k$ and $\gamma_{\mathbf{i}}^\ell$ induce the following maps:
\begin{align*}
	&\delta_k^0\colon\mathcal{Q}_i\to\mathcal{Q}_{i-1}&&	\qquad\text{for }k=0,\ldots,i,\\
	&\sigma_k\colon\mathcal{Q}_{i-1}\to\mathcal{Q}_i&&		\qquad\text{for }k=0,\ldots,i,\\
	&\gamma_{\mathbf{i}}^\ell\colon\mathcal{Q}_{r-1}\times[0,\ell]^{\times (i_0-1)}\times\cdots\times[0,\ell]^{\times (i_r-1)}\to\mathcal{Q}_{i_0+\cdots+i_r-1}&&		\qquad\text{for }\mathbf{i}=(i_0,\ldots,i_r).
\end{align*}
Here, we shift the parametrization to $(t_0,\ldots,t_i)$.
For example, $\delta_k^0(t_0,\ldots,t_i)=(t_0,\ldots,t_{k-1},0,t_k,\ldots,t_i)$.
Through a homeomorphism $[0,\infty]\cong[0,1]$, a natural homeomorphism $\mathcal{Q}_i\to\Delta^i$ for each $i$ is given by
\[
	(t_0,\ldots,t_i)(\in[0,1]^{\times(i+1)})\mapsto(t_0/T,\ldots,t_i/T)
\]
such that $T=t_0+\cdots+t_i$, which commutes with the boundary and degeneracy operators.
For $i=2$, see Figure \ref{fig_Q2}.

\begin{figure}
\centering
\includegraphics[width=8cm,clip]{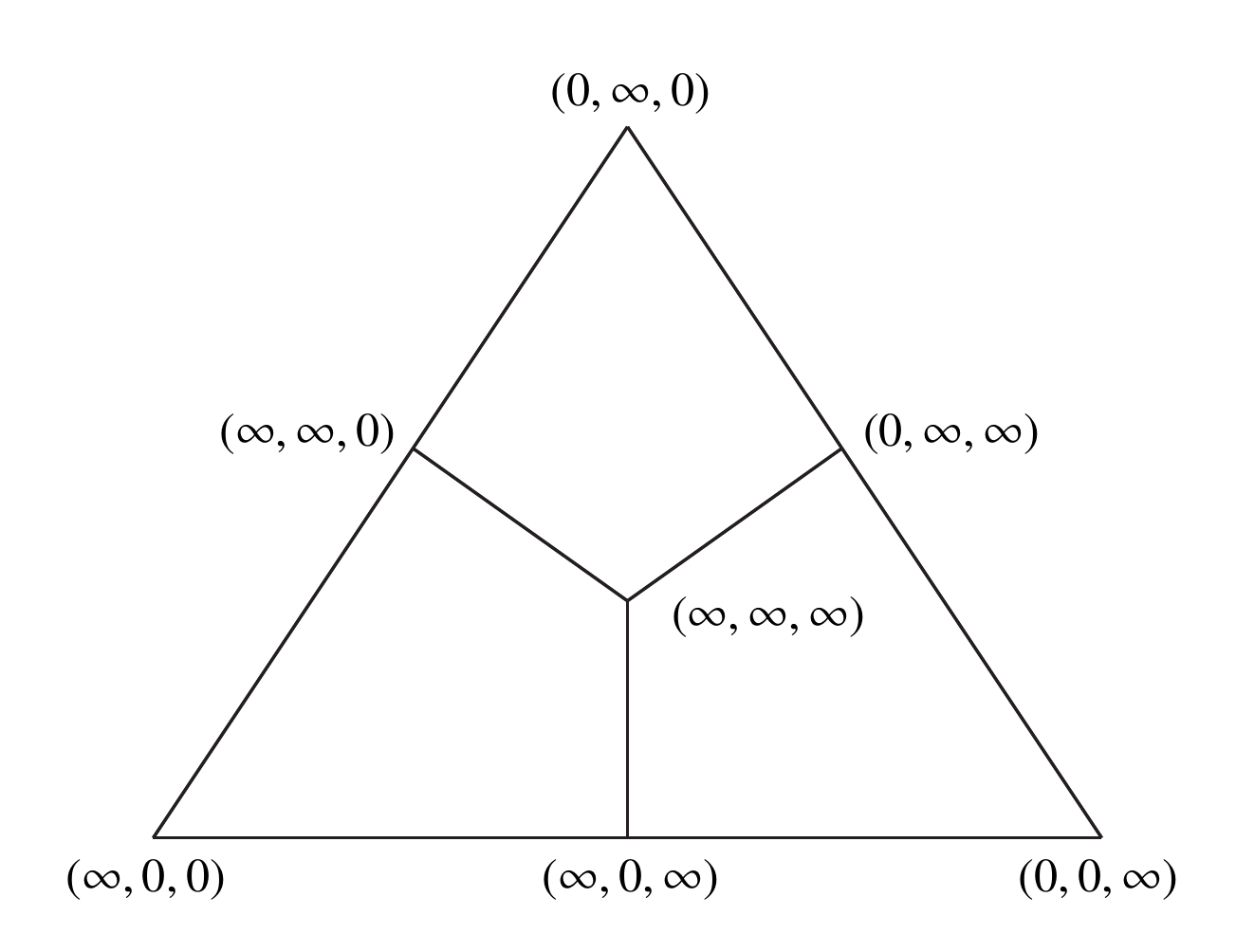}
\caption{}
\label{fig_Q2}
\end{figure}

Consider the topological category $\mathcal{A}_n^{\mathrm{R}}\times_{\mathcal{A}_n}\mathcal{A}_n^{\mathrm{L}}$ defined by the fiber product.
\begin{dfn}
Let $(X,G,Y)\in\mathcal{A}_n^{\mathrm{R}}\times_{\mathcal{A}_n}\mathcal{A}_n^{\mathrm{L}}$.
Then the \textit{$n$-th bar construction} $B_n(X,G,Y)$ is defined by
\begin{align*}
	B_n(X,G,Y)=\left.\left(\coprod_{0\le i\le n}\mathcal{Q}_i\times X\times G^{\times i}\times Y\right)\right/\sim
\end{align*}
with the following identifications:
\begin{align*}
\begin{array}{cl}
\text{(i)} &
(\delta_k^0(\mathbf{t});x,g_1,\ldots,g_i,y)\sim\left\{
	\begin{array}{ll}
		(\mathbf{t};xg_1,g_2,\ldots,g_i,y)					& (k=0) \\
		(\mathbf{t};x,g_1,\ldots,g_{k-1}g_k,\ldots,g_i,y)	& (0<k<i) \\
		(\mathbf{t};x,g_1,\ldots,g_{i-1},g_iy)				& (k=i),
	\end{array}
	\right.
\\
\text{(ii)} &
(\mathbf{t};x,g_1,\ldots,g_{k-1},*,g_{k+1},\ldots,g_i,y)\sim(\sigma_k(\mathbf{t});x,g_1,\ldots,g_{k-1},g_{k+1},\ldots,g_i,y).
\end{array}
\end{align*}
The maps $\iota_{n_1}^{n_2}:B_{n_1}(X,G,Y)\rightarrow B_{n_2}(X,G,Y)$ for $n_1\le n_2$ and $\iota_n:B_n(X,G,Y)\to B(X,G,Y):=B_\infty(X,G,Y)$ denote the inclusions.
\end{dfn}

In fact, this construction induces a continuous functor
\begin{align*}
	B_n:\mathcal{A}_n^{\mathrm{R}}\times_{\mathcal{A}_n}\mathcal{A}_n^{\mathrm{L}}\to\mathbf{CG}
\end{align*}
as follows.
Let $(\phi,f,\psi):(X,G,Y)\to(X',G',Y')$ be a map of size $\ell\in[0,\infty)$ in $\mathcal{A}_n^{\mathrm{R}}\times_{\mathcal{A}_n}\mathcal{A}_n^{\mathrm{L}}$.
Define a map $B_n(\phi,f,\psi):B_n(X,G,Y)\rightarrow B_n(X',G',Y')$ by
\begin{align*}
&B_n(\phi,f,\psi)(\gamma_{\mathbf{i}}^{\ell}(\mathbf{t};\mathbf{s}_0,\ldots,\mathbf{s}_r);x,\mathbf{g}_0,\ldots,\mathbf{g}_r,y)\\
&=[\mathbf{t};\psi_{i_0}(\mathbf{s}_0;x,\mathbf{g}_0),f_{i_1}(\mathbf{s}_1;\mathbf{g}_1),\ldots,f_{i_{r-1}}(\mathbf{s}_{r-1};\mathbf{g}_{r-1}),\psi_{i_r}(\mathbf{s}_r;\mathbf{g}_r,y)]
\end{align*}
for a multi-index $\mathbf{i}=(i_0+1,i_1,\ldots,i_{r-1},i_r+1)$ with $i_0+\cdots+i_r=i$, $\mathbf{t}\in\mathcal{Q}_{r-1}$, $\mathbf{s}_0\in[0,\ell]^{\times i_0}$, $\mathbf{s}_r\in[0,\ell]^{\times i_r}$, $\mathbf{s}_k\in[0,\ell]^{\times(i_k-1)}$ for $0<k<r$, $x\in X$, $\mathbf{g}_k\in G^{\times i_k}$ and $y\in Y$.
Comparing with the definition of compositions of maps in $\mathcal{A}_n^{\mathrm{R}}\times_{\mathcal{A}_n}\mathcal{A}_n^{\mathrm{L}}$, it is straightforward to see that this construction gives a continuous functor $B_n:\mathcal{A}_n^{\mathrm{R}}\times_{\mathcal{A}_n}\mathcal{A}_n^{\mathrm{L}}\to\mathbf{CG}$.

\begin{dfn}
For a topological monoid $G$, the spaces $B_nG:=B_n(*,G,*)$ and $BG:=B_\infty G=B(*,G,*)$ are called the \textit{$n$-th projective space} and the \textit{classifying space} of $G$, respectively.
We denote $E_nG:=B_n(\ast,G,G)$ and $EG:=B(\ast,G,G)$ and often consider the canonical projection $E_nG\to B_nG$.
\end{dfn}

\begin{rem}
Projective spaces and classifying spaces are always path-connected.
\end{rem}

Now we collect several technical lemmas.
Though most of them are well-known, we give a proof using the cubical subdivision $\mathcal{Q}_i$ for consistency.

\begin{lem}
\label{lem_B(X,G,G)}
Let $G$ be a topological monoid, $X$ a right $G$-space, and $Y$ a left $G$-space.
Then the following maps are deformation retractions of the canonical inclusions $X\subset B(X,G,G)$ and $Y\subset B(G,G,Y)$, respectively:
\begin{align*}
& B(X,G,G)\rightarrow X,
	&& [\mathbf{t};x,g_1,\ldots,g_i]\mapsto xg_1\cdots g_i,\\
& B(G,G,Y)\rightarrow Y,
	&& [\mathbf{t};g_1,\ldots,g_i,y]\mapsto g_1\cdots g_iy.
\end{align*}
\end{lem}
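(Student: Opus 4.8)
The plan is to prove the first statement, that $r\colon B(X,G,G)\to X$, $[\mathbf{t};x,g_1,\ldots,g_i,y]\mapsto xg_1\cdots g_iy$, is a deformation retraction of the canonical inclusion; the second, for $B(G,G,Y)$, is entirely symmetric and I treat it at the end. Here I read a point as a class $[\mathbf{t};x,g_1,\ldots,g_i,y]$ with $\mathbf{t}\in\mathcal{Q}_i$, the last entry $y$ being the $Y=G$ coordinate, so the asserted map multiplies everything together using the right $G$-action on $X$ and the multiplication of $G$. First I would check that $r$ is well defined and continuous. Since the formula is independent of $\mathbf{t}$, it descends over each $\mathcal{Q}_i$; it is compatible with the face relations because each $\delta_k^0$ merely multiplies two adjacent factors and the actions together with the multiplication of $G$ are associative, and it is compatible with the unit relations because deleting a unit entry does not change the product. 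Continuity is clear, as $r$ is assembled from the structure maps of the actions. The canonical inclusion is $j\colon X\to B(X,G,G)$, $x\mapsto[x;e]$, the class with $i=0$, $X$-coordinate $x$ and $Y$-coordinate $e$, and then $r\circ j=\mathrm{id}_X$ since $x\cdot e=x$.

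The heart of the argument is a deformation $H\colon[0,1]\times B(X,G,G)\to B(X,G,G)$ from $\mathrm{id}$ to $j\circ r$ relative to $X$, built from the \emph{extra degeneracy} afforded by the free right $G$-action on the last factor $Y=G$: inserting a unit in the module slot sends $[\mathbf{t};x,g_1,\ldots,g_i,y]$ to the degree-$(i+1)$ class with entries $x,g_1,\ldots,g_i,y,e$. Using the homeomorphism $\mathcal{Q}_{i+1}\cong\Delta^{i+1}$, I would define $H$ on the simplex carrying this class by interpolating linearly, in barycentric coordinates, from the facet on which the new coordinate vanishes to the terminal vertex opposite that facet. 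On the facet where the new coordinate vanishes, the face relation absorbs the inserted $e$ into $y$ (as $ye=y$), recovering the original point, so $H_0=\mathrm{id}$; at the terminal vertex the iterated face relation with $k=0$ absorbs $g_1,\ldots,g_i,y$ into $x$, yielding $[xg_1\cdots g_iy;e]=j(r[\mathbf{t};x,g_1,\ldots,g_i,y])$, so $H_1=j\circ r$. On a point $j(x)=[x;e]$ the inserted unit and the unit already present make the relevant simplex doubly degenerate, so the whole prism collapses and $H$ is stationary on $X$; hence $r$ is a deformation retraction.

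The step I expect to be the main obstacle is verifying that $H$ is well defined and continuous on the quotient, that is, that the interpolation respects all the face and unit identifications and glues across simplices of different dimensions. This reduces to the simplicial identities for the extra degeneracy, namely that the last face of the inserted class is the identity and that the remaining faces and the degeneracies commute with the insertion; in the present formalism these must be read off from the identities for $\delta_k^0$, $\sigma_k$ and $\gamma_{\mathbf{i}}^\ell$ in Lemma \ref{lem_identities_gamma}. Once they are checked, the standard fact that the realization of a simplicial space equipped with an extra degeneracy deformation retracts onto its augmentation applies and completes the argument. Finally, the retraction $B(G,G,Y)\to Y$ follows verbatim from the same construction, with the extra degeneracy now inserting the unit $e$ into the free \emph{left} factor $X=G$, so that the terminal vertex absorbs $x,g_1,\ldots,g_i$ into $y$ from the left.
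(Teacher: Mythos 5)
Your proof is correct and follows essentially the same route as the paper's: both contract $B(X,G,G)$ onto $X$ via the extra degeneracy that inserts a unit into the free module slot and then interpolate toward the cone point, checking compatibility with the face and unit identifications and stationarity on $X$. The paper merely writes this contraction explicitly on the cubical model, as maps $\kappa_i\colon\mathcal{Q}_1\times\mathcal{Q}_i\to\mathcal{Q}_{i+1}$ together with the list of identities against $\delta_k^0$ and $\sigma_k$ that you defer to at the end.
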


\begin{proof}
Define a homotopy $\kappa_i:\mathcal{Q}_1\times\mathcal{Q}_i\rightarrow\mathcal{Q}_{i+1}$ of $\mathcal{Q}_{i+1}$ for each $i$ by
\[
	\kappa_i(s_0,s_1;t_0,\ldots,t_i)=\left(\frac{s_0t_0}{1+s_0+t_0},\ldots,\frac{s_0t_i}{1+s_0+t_i},s_1\right),
\]
which satisfies the following conditions:
\begin{align*}
&\kappa_i(\delta_1^0;\mathbf{t})=\delta_{i+1}^0(\mathbf{t}),\\
&\kappa_i(\delta_0^0;\mathbf{t})=(0,\ldots,0,\infty),\\
&\kappa_i(\mathbf{s};\delta_k^0(\mathbf{t}))=\delta_k^0(\kappa_{i-1}(\mathbf{s};\mathbf{t}))&&\text{ for }0\le k\le i,\\
&\kappa_i(\mathbf{s};\sigma_k(\mathbf{t}))=\sigma_k(\kappa_{i+1}(\mathbf{s};\mathbf{t}))&&\text{ for }0\le k\le i.
\end{align*}
Note that, for any $[\mathbf{t};x,\mathbf{g}]\in B(X,G,G)$, we have an equality
\begin{align*}
[\mathbf{t};x,\mathbf{g}]=[\delta_i^0(\mathbf{t});x,\mathbf{g},*].
\end{align*}
Then the homotopy $H\colon\mathcal{Q}_1\times B(X,G,Y)\to B(X,G,Y)$ defined by
\[
	H(\mathbf{s},[\mathbf{t};x,\mathbf{g}])=[\kappa_i(\mathbf{s},\mathbf{t});x,\mathbf{g},\ast]
\]
is a deformation of $B(X,G,Y)$ to $X$.
One can prove similarly for $Y\subset B(G,G,Y)$.
\end{proof}

\begin{lem}
\label{lem_homotopy_invariance_of_B}
Let $G$ and $G'$ be well-pointed topological monoids, $X$ and $X'$ right $G$-spaces, and $Y$ and $Y'$ left $G$-spaces.
Then the following hold.
\begin{enumerate}[(i)]
\item
The inclusion $B_{n-1}(X,G,Y)\subset B_n(X,G,Y)$ has the homotopy extension property.
\item
A map $(\phi,f,\psi):(X,G,Y)\rightarrow(X',G',Y')$ in $\mathcal{A}_n^{\mathrm{R}}\times_{\mathcal{A}_n}\mathcal{A}_n^{\mathrm{L}}$ induces a weak equivalence $B_n(X,G,Y)\rightarrow B_n(X',G',Y')$ if the underlying maps of $\phi$, $f$ and $\psi$ are weak equivalences.
\end{enumerate}
\end{lem}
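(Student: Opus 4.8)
The plan is to prove both parts at once by exhibiting $B_n(X,G,Y)$ as the space obtained from $B_{n-1}(X,G,Y)$ by attaching a single top cell along a cofibration, and then to induct on $n$. Reading off the identifications (i) and (ii) in the definition of the bar construction, I would record that $B_n(X,G,Y)$ is the pushout
\[
\xymatrix{
A_n \ar[r] \ar[d] & \mathcal{Q}_n\times X\times G^{\times n}\times Y \ar[d] \\
B_{n-1}(X,G,Y) \ar[r] & B_n(X,G,Y)
}
\]
where
\[
A_n=\left(\partial\mathcal{Q}_n\times X\times G^{\times n}\times Y\right)\cup\left(\mathcal{Q}_n\times X\times T_nG\times Y\right),
\]
and $\partial\mathcal{Q}_n=\{(t_0,\ldots,t_n)\in\mathcal{Q}_n\mid t_k=0\text{ for some }k\}$ is the union of the images of the face operators $\delta_0^0,\ldots,\delta_n^0$. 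The left vertical map is induced by the relation (i) on the $\partial\mathcal{Q}_n$ part and by the relation (ii) on the $T_nG$ part.

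For part (i), it suffices to show that the top horizontal inclusion $A_n\hookrightarrow\mathcal{Q}_n\times X\times G^{\times n}\times Y$ has the homotopy extension property, since this property is preserved under pushouts. Now $(\mathcal{Q}_n,\partial\mathcal{Q}_n)$ is an NDR pair because the homeomorphism $\mathcal{Q}_n\cong\Delta^n$ carries $\partial\mathcal{Q}_n$ onto $\partial\Delta^n$, and $(G^{\times n},T_nG)$ is an NDR pair because $G$ is well-pointed (so $(G,*)$ is an NDR pair and the fat wedge inclusion is the associated pushout-product of $n$ copies of it). Forming the pushout-product of these two NDR pairs and then multiplying by the dummy factors $X$ and $Y$ exhibits exactly $(\mathcal{Q}_n\times X\times G^{\times n}\times Y,\ A_n)$ as an NDR pair, which yields the desired homotopy extension property.

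For part (ii), I would argue by induction on $n$ via the gluing lemma for weak equivalences (valid here since $\mathbf{CG}$ is left proper). When $n=0$ we have $B_0(X,G,Y)\cong X\times Y$ and the conclusion is immediate, a product of weak equivalences being a weak equivalence. For the inductive step, $(\phi,f,\psi)$ induces a map between the pushout squares above. The inductive hypothesis makes $B_{n-1}(X,G,Y)\to B_{n-1}(X',G',Y')$ a weak equivalence; the map $\mathcal{Q}_n\times X\times G^{\times n}\times Y\to\mathcal{Q}_n\times X'\times (G')^{\times n}\times Y'$ is a weak equivalence, being a product of weak equivalences; and $A_n\to A_n'$ is a weak equivalence by the same gluing-lemma reasoning applied to the defining pushout of $A_n$, where well-pointedness of $G$ and $G'$ is used to see that $T_nG\to T_nG'$ is a weak equivalence. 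As the top horizontal maps are cofibrations by part (i), the gluing lemma gives that $B_n(X,G,Y)\to B_n(X',G',Y')$ is a weak equivalence.

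The main obstacle is the bookkeeping in part (i): one must check that the subspace along which the top cell is glued is \emph{exactly} the pushout-product $A_n$ and nothing more, so that well-pointedness of $G$ can be converted into an NDR structure on $(\mathcal{Q}_n\times X\times G^{\times n}\times Y,\ A_n)$. Once this identification is secured, both the cofibration statement (i) and the weak-equivalence statement (ii) follow formally. A secondary point requiring care in (ii) is the claim that $T_nG\to T_nG'$ is a weak equivalence; this is itself a small induction over the cells of the fat wedge, again powered by the NDR structure coming from well-pointedness.
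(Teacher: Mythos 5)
Your pushout decomposition of $B_n(X,G,Y)$ over $A_n=\partial\mathcal{Q}_n\times X\times G^{\times n}\times Y\cup\mathcal{Q}_n\times X\times T_nG\times Y$ and the NDR/pushout-product argument for part (i) coincide with the paper's proof of (i), and that part is fine. The gap is in part (ii), at the sentence ``$(\phi,f,\psi)$ induces a map between the pushout squares above.'' No such strict map of squares exists. The only candidate for the vertical map on the cell $\mathcal{Q}_n\times X\times G^{\times n}\times Y$ is the product $\operatorname{id}\times\phi_0\times f_1^{\times n}\times\psi_0$ of underlying maps, and this does \emph{not} commute strictly with the attaching maps $A_n\to B_{n-1}(X,G,Y)$ and $A_n'\to B_{n-1}(X',G',Y')$: the face identifications use the multiplications of $G$ and $G'$, and $f_1$ is only the underlying map of an $A_n$-map, not a homomorphism. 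Nor does the functorially defined $B_n(\phi,f,\psi)$ restrict to a product map on the top cell --- by construction it is assembled from the higher components $f_i,\phi_i,\psi_i$ via the subdivision maps $\gamma_{\mathbf{i}}^{\ell}$.

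The paper's proof confronts exactly this point: the comparison square over $\partial\mathcal{Q}_n\times X\times G^{\times n}\times Y$ is declared to commute only up to an ``appropriate homotopy'' extracted from the $A_n$-forms, the pushouts are replaced by \emph{homotopy} pushouts along the horizontal direction, and the resulting map is identified with $B_n(\phi,f,\psi)$. To repair your argument you would need the homotopy-coherent version of the gluing lemma (a map of spans commuting up to specified homotopies, with one leg a cofibration, induces a weak equivalence of double mapping cylinders), plus the verification that the map so produced agrees up to homotopy with $B_n(\phi,f,\psi)$ as actually defined. This is where the $A_n$-structure genuinely enters, and it is the only nonformal content of (ii); the remaining ingredients you list (the base case $B_0(X,G,Y)\cong X\times Y$, and $T_nG\to T_nG'$ and $A_n\to A_n'$ being weak equivalences via well-pointedness) are correct and unproblematic.
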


\begin{proof}
There is a pushout diagram
\begin{align*}
\xymatrix{
	\partial\mathcal{Q}_n\times X\times G^{\times n}\times Y\cup\mathcal{Q}_n\times X\times T_nG\times Y \ar[r] \ar[d]
		& B_{n-1}(X,G,Y) \ar[d]^{\iota_{n-1}^n} \\
	\mathcal{Q}_n\times X\times G^{\times n}\times Y \ar[r]
		& B_n(X,G,Y).
}
\end{align*}
Then the left vertical arrow has the homotopy extension property since so do the inclusions $\partial\mathcal{Q}_n\subset\mathcal{Q}_n$ and $*\subset G$.
This implies the assertion (i).
For the assertion (ii), the induced map $B_n(X,G,Y)\rightarrow B_n(X',G',Y')$ is a homotopy pushout along the horizontal direction of the diagram
\[
\xymatrix{
	B_{n-1}(X,G,Y) \ar[d]_-{B_{n-1}(\phi,f,\psi)} &
		\partial\mathcal{Q}_n\times X\times G^{\times n}\times Y \ar[l] \ar[r] \ar[d]^-{\operatorname{id}\times\phi_0\times f_1^{\times n}\times\psi_0} &
		\mathcal{Q}_n\times X\times G^{\times n}\times Y \ar[d]^-{\operatorname{id}\times\phi_0\times f_1^{\times n}\times\psi_0} \\
	B_{n-1}(X',G',Y') &
		\partial\mathcal{Q}_n\times X'\times G'^{\times n}\times Y' \ar[l] \ar[r] &
		\mathcal{Q}_n\times X'\times G'^{\times n}\times Y'	
}
\]
such that the left square is given the appropriate homotopy and the right square strictly commutes.
Then the assertion follows by induction on $n$.
\end{proof}

\begin{lem}
\label{lem_B_quasifibration}
Let $G$ be a well-pointed topological monoid, $X$ a right $G$-space, and $Y$ a left $G$-space.
If the action $g\colon Y\to Y$ is a weak equivalence for any $g\in G$, then the projection $B_n(X,G,Y)\rightarrow B_n(X,G,*)$ is a quasifibration.
Moreover, if $G$ is a topological group, then the projection $B_n(X,G,Y)\rightarrow B_n(X,G,*)$ is a fiber bundle.
\end{lem}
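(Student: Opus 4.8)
The plan is to verify the standard quasifibration criteria by inducting on the skeletal filtration of the base. Write $p\colon B_n(X,G,Y)\to B_n(X,G,*)$ for the projection, and observe that it respects the filtrations $B_k(X,G,Y)\subset B_n(X,G,Y)$ and $B_k(X,G,*)\subset B_n(X,G,*)$ in the sense that $p^{-1}(B_k(X,G,*))=B_k(X,G,Y)$. Since $B_k(X,G,Y)\to B_k(X,G,*)$ is itself an instance of the present statement with $k$ in place of $n$, it suffices to prove by induction on $k$ that $p\colon B_k(X,G,Y)\to B_k(X,G,*)$ is a quasifibration. For $k=0$ this is the trivial projection $X\times Y\to X$. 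For the inductive step I would appeal to the gluing and deformation criteria for quasifibrations as in the Dold--Lashof construction \cite{DL59}: if $B_k(X,G,*)$ is covered by two open sets over each of which, and over whose intersection, $p$ is a quasifibration, then $p$ is a quasifibration over $B_k(X,G,*)$.

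First I would take $U:=B_k(X,G,*)\setminus B_{k-1}(X,G,*)$, the image of the open top cell. No identification of the bar construction is active on this locus, so $p^{-1}(U)\cong U\times Y$ with $p$ the trivial projection, which is a quasifibration. Next, using that the inclusion $B_{k-1}(X,G,*)\subset B_k(X,G,*)$ has the homotopy extension property (Lemma \ref{lem_homotopy_invariance_of_B}(i), which rests on the well-pointedness of $G$ through the cofibration $T_kG\subset G^{\times k}$), I would choose an open neighborhood $V$ of $B_{k-1}(X,G,*)$ together with a deformation retraction $D_t$ of $V$ onto $B_{k-1}(X,G,*)$ obtained from a collar of the attaching boundary $\partial\mathcal{Q}_k\times X\times G^{\times k}\cup\mathcal{Q}_k\times X\times T_kG$ inside the top cell. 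The intersection $U\cap V$ is the punctured collar, over which $p$ is again a trivial product projection.

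The crux is to promote $D_t$ to a deformation $\tilde D_t$ of $p^{-1}(V)$ onto $B_{k-1}(X,G,Y)$ with $p\tilde D_t=D_t p$, and to control the fibrewise map $\tilde D_1\colon p^{-1}(b)\to p^{-1}(D_1(b))$. The same collar lifts to $p^{-1}(V)$ because $B_k(X,G,Y)$ is attached by the identical cell with an extra $Y$-factor, and the only identifications involving the $Y$-coordinate are those of the last face, $(\delta_k^0(\mathbf{t});x,g_1,\ldots,g_k,y)\sim(\mathbf{t};x,g_1,\ldots,g_{k-1},g_ky)$. Hence for a base point retracting to that face, $\tilde D_1$ is the left translation $y\mapsto g_k y$, whereas for a point retracting to any other face or degeneracy it is the identity; in every case it is a weak equivalence, by the hypothesis on the action of $G$ on $Y$. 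The deformation criterion for quasifibrations \cite{DL59}, together with the inductive hypothesis over $B_{k-1}(X,G,*)$, then shows that $p$ is a quasifibration over $V$, and the open-cover criterion completes the induction. I expect the main obstacle to be the explicit construction of the collar $V$ and its fibrewise lift $\tilde D_t$ so as to be compatible with the fat-wedge degeneracies while genuinely realizing the translations $y\mapsto g_k y$ on fibers; this is precisely the step that forces both the well-pointedness of $G$ and the weak-equivalence hypothesis on the action.

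Finally, if $G$ is a topological group, each translation $g\colon Y\to Y$ is a homeomorphism with inverse $g^{-1}$, so the covering deformations above are fibrewise homeomorphisms rather than merely fibrewise weak equivalences. Combining the triviality of $p$ over $U$ with the inductively established local triviality over $B_{k-1}(X,G,*)$ and transporting along these fibrewise homeomorphisms produces honest local trivializations of $p$ over $B_k(X,G,*)$, with transition functions valued in $G$ acting on $Y$. Hence $p\colon B_n(X,G,Y)\to B_n(X,G,*)$ is a fiber bundle with fiber $Y$ and structure group $G$.
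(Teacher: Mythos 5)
Your argument is correct and is precisely the standard Dold--Thom gluing-plus-deformation argument over the skeletal filtration of the bar construction, which is exactly what the paper invokes in one line by citing the well-known criterion \cite[Theorem 2.7]{May90} for the quasifibration statement and \cite[Theorem 8.2]{May75} for the bundle statement when $G$ is a group. You have simply unpacked those citations, and your identification of where the hypotheses enter --- well-pointedness of $G$ giving the collar via the cofibration $B_{k-1}\subset B_k$, and the fibrewise maps of the covering deformation being left translations $y\mapsto g_ky$, hence weak equivalences (homeomorphisms in the group case) --- matches the intended application.
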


\begin{proof}
The first half follows from applying the well-known criterion \cite[Theorem 2.7]{May90}.
For the proof of the latter half, see \cite[Theorem 8.2]{May75}.
\end{proof}

\begin{prp}
\label{prp_homomorphism_fiber_seq}
Let $f:G\rightarrow H$ be a homomorphism between well-pointed grouplike topological monoids.
Then the following sequence of maps is a homotopy fiber sequence
\begin{align*}
G\xrightarrow{f}H\rightarrow B(*,G,H)\rightarrow BG\xrightarrow{Bf}BH.
\end{align*}
\end{prp}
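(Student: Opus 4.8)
The plan is to realize every term of the sequence through the bar construction of the left $G$-action on $H$ defined by $g\cdot h=f(g)h$, and to read off the three required fiber sequences from quasifibrations whose total spaces are contractible. Note first that $f$ is equivariant for this action, since $f(g\cdot g')=f(g)f(g')=g\cdot f(g')$, and that each translation $h\mapsto f(g)h$ is a weak equivalence because $H$ is grouplike; hence Lemma \ref{lem_B_quasifibration} applies to all the projections below.

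First I would treat the rightmost three terms. The homomorphism $f$ together with $\mathrm{id}_H$, which is $f$-equivariant, induces a map of bar constructions $B(*,G,H)\to B(*,H,H)=EH$ covering $Bf\colon BG\to BH$, yielding a commutative square. By Lemma \ref{lem_B_quasifibration} both vertical maps $B(*,G,H)\to BG$ and $EH\to BH$ are quasifibrations with fiber $H$, and the induced map on fibers is $\mathrm{id}_H$. Since $EH$ is contractible by Lemma \ref{lem_B(X,G,G)} in the case $X=*$, the square is homotopy cartesian, so $B(*,G,H)$ is the homotopy fiber of $Bf$; this gives the fiber sequence $B(*,G,H)\to BG\xrightarrow{Bf}BH$. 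The quasifibration $H\to B(*,G,H)\to BG$ itself provides the middle fiber sequence, with fiber $H=B_0(*,G,H)$.

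For the leftmost three terms I would compare with the universal case. The $f$-equivariant map $\mathrm{id}\colon G\to H$ induces $EG=B(*,G,G)\to B(*,G,H)$ over $BG$, a map of quasifibrations that is $f$ on the fibers $G\to H$. Since $EG$ is contractible by Lemma \ref{lem_B(X,G,G)} again, the connecting map $\delta_G\colon\Omega BG\to G$ of the extended fiber sequence $\Omega BG\to G\to EG\to BG$ is a weak equivalence. Naturality of the connecting map for the above map of fiber sequences gives $\delta_H=f\circ\delta_G$, where $\delta_H\colon\Omega BG\to H$ is the connecting map of $H\to B(*,G,H)\to BG$. Because $\delta_G$ is an equivalence, the left extension $\Omega BG\xrightarrow{\delta_H}H\to B(*,G,H)$ is equivalent to $G\xrightarrow{f}H\to B(*,G,H)$, which is the remaining fiber sequence.

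The routine verifications I would suppress are the identifications of the fibers of the various projections over the basepoint, all of the form $B_0=H$ or $B_0=G$, and the checks that the indicated maps are genuine morphisms of triples inducing bar-construction maps. The main obstacle is the passage used twice: that a map of quasifibrations inducing a weak equivalence on fibers yields a homotopy cartesian square, together with the naturality of the connecting map under such a map. These are standard facts, but they require the spaces involved to have the homotopy type of CW complexes and a careful invocation of the quasifibration technology, which is where I would spend the most care.
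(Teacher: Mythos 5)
Your argument is correct, and for two of the three overlapping sub-sequences it coincides with the paper's proof: the middle terms $H\to B(*,G,H)\to BG$ are a quasifibration by Lemma \ref{lem_B_quasifibration}, and the right terms are handled by the square comparing $B(*,G,H)\to BG$ with $EH\to BH$, homotopy cartesian because both are quasifibrations with fiber $H$ and $EH$ is contractible. You diverge only on the left three terms. The paper identifies $G\xrightarrow{f}H\to B(*,G,H)$ with the sequence $G\to B(G,G,H)\to B(*,G,H)$, where the second map is a quasifibration with fiber $G$ and $B(G,G,H)$ deformation retracts onto $H$ by Lemma \ref{lem_B(X,G,G)}, the composite $G\to B(G,G,H)\to H$ being $f$ on the nose; you instead map $EG=B(*,G,G)\to B(*,G,H)$ over $BG$ and invoke naturality of the connecting map together with the fact that $\delta_G\colon\Omega BG\to G$ is a weak equivalence. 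Both routes work: the paper's buys a strictly commuting identification with no appeal to connecting maps or their naturality (which, for quasifibrations rather than Hurewicz fibrations, deserves a word of justification), while yours avoids introducing the two-sided object $B(G,G,H)$. One small caveat: your blanket justification for applying Lemma \ref{lem_B_quasifibration} cites only grouplikeness of $H$, but the quasifibration $EG\to BG$ you use requires left translation on $G$ to be a weak equivalence, i.e.\ grouplikeness of $G$ --- which is, of course, among the hypotheses.
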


\begin{proof}
The sequence of the left three terms is equivalent to the sequence
\begin{align*}
G\rightarrow B(G,G,H)\rightarrow B(*,G,H)
\end{align*}
which is a homotopy fiber sequence by Lemma \ref{lem_B_quasifibration}.
Again by Lemma \ref{lem_B_quasifibration}, the sequence of the middle three terms is also a homotopy fiber sequence.
For the right middle terms, they also constitute a homotopy fiber sequence since the topological pullback square 
\begin{align*}
\xymatrix{
	B(*,G,H) \ar[r]^-{B(*,f,\mathrm{id}_H)} \ar[d]
		& B(*,H,H) \ar[d] \\
	B(*,G,*) \ar[r]^-{B(*,f,*)}
		& B(*,H,*)
}
\end{align*}
is a homotopy pullback as well by Lemma \ref{lem_B_quasifibration}.
\end{proof}

We note that there is a natural homeomorphism $B_1G\cong\Sigma G$.
By the exponential law
\begin{align*}
\Map_0(\Sigma G,\Sigma G)\cong\Map_0(G,\Omega\Sigma G),
\end{align*}
we have a natural map $E:G\rightarrow\Omega\Sigma G$.

\begin{lem}
\label{lem_G_to_OmegaBG}
For a grouplike topological monoid $G$, the composite
\begin{align*}
G\xrightarrow{E}\Omega\Sigma G\cong\Omega B_1G\xrightarrow{\Omega\iota_1}\Omega BG
\end{align*}
is a weak equivalence.
\end{lem}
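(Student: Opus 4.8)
The plan is to realize the composite as the holonomy equivalence of the universal quasifibration $EG\to BG$ together with the contractibility of its total space. First I would apply Lemma~\ref{lem_B_quasifibration} with $X=*$ and $Y=G$: since $G$ is grouplike, left multiplication $g\colon G\to G$ is a weak equivalence for every $g\in G$, so the projection $p\colon EG=B(*,G,G)\to BG=B(*,G,*)$ is a quasifibration whose fiber over the basepoint is $G$, namely the subspace of $0$-cells $[*;y]$ with $y\in G$. Next, Lemma~\ref{lem_B(X,G,G)} applied with $X=*$ shows that $EG=B(*,G,G)$ deformation retracts onto a point, so $EG$ is contractible. The long exact sequence of the quasifibration then forces the connecting homomorphism $\partial\colon\pi_{k+1}(BG)\to\pi_k(G)$ to be an isomorphism for every $k$; equivalently, the map $G\to\Omega BG$ that sends a fiber point $g$ to the loop obtained by projecting any path from the basepoint to $g$ inside the contractible space $EG$ is a weak equivalence.

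The core of the argument is to identify the given composite with this equivalence, and I would do this inside $E_1G=B_1(*,G,G)$. Here the $1$-cell $[t;g,*]$, with $t$ ranging over $\mathcal{Q}_1\cong\Delta^1$, is a path joining the basepoint $[*;*]$ to the fiber point $[*;g]=g$; the two endpoints are read off from the bar-construction identifications (i) in the cases $k=0$ and $k=i$, using that $*$ is a point. This path lies over the loop $[t;g]$ in $B_1G$, which under the homeomorphism $B_1G\cong\Sigma G$ is precisely the loop $E(g)$, because $E$ is adjoint to the identity of $\Sigma G$. Including into $EG$ and $BG$, the path $[t;g,*]$ therefore exhibits $(\Omega\iota_1)(E(g))$ as the projection of a path from the basepoint to the fiber point $g$, which is exactly the recipe for the equivalence of the previous paragraph. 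To make this comparison rigorous I would compare $p$ with the Moore path fibration $e\colon P^{\mathrm{M}}BG\to BG$ via the map $EG\to P^{\mathrm{M}}BG$ over $BG$ induced by the contraction (sending $x$ to the Moore path $t\mapsto p(r_t(x))$ for a contraction $r_t$ of $EG$); this map is a weak equivalence between contractible spaces covering $\mathrm{id}_{BG}$, and its restriction to fibers is the composite $(\Omega\iota_1)\circ E$ up to homotopy by the explicit path above. Applying the five lemma to the induced morphism of long exact sequences (quasifibration on top, fibration on the bottom) yields that the composite is a weak equivalence.

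The main obstacle is precisely this last identification. The abstract weak equivalence $G\simeq\Omega BG$ is a formal consequence of having a quasifibration with contractible total space, but pinning down that the \emph{specific} map $(\Omega\iota_1)\circ E$ realizes it requires carefully unwinding three things at once: the homeomorphism $B_1G\cong\Sigma G$, the adjunction defining $E$, and the gluing relations of the bar construction restricted to the $1$-skeleton $E_1G$. Once the path $[t;g,*]$ is shown to project to $(\Omega\iota_1)(E(g))$, everything else follows formally from Lemmas~\ref{lem_B_quasifibration} and~\ref{lem_B(X,G,G)}.
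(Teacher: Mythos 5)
Your proposal is correct and follows essentially the same route as the paper: the paper likewise uses the contraction of $EG=B(\ast,G,G)$ from Lemma \ref{lem_B(X,G,G)} to produce a map of fiber sequences from $G\to EG\to BG$ (a quasifibration by Lemma \ref{lem_B_quasifibration}) to $\Omega BG\to PBG\to BG$, and then observes that by the definition of $\kappa_0$ the induced map on fibers is exactly the composite $(\Omega\iota_1)\circ E$, so it is a weak equivalence. Your more explicit unwinding of the $1$-cell $[t;g,\ast]$ and the use of Moore paths are only cosmetic differences from the paper's argument.
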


\begin{proof}
Using the contracting homotopy of $B(\ast,G,G)$ in the proof of Lemma \ref{lem_B(X,G,G)}, we obtain the following commutative diagram:
\[
\xymatrix{
	G \ar[r] \ar[d] &
		EG \ar[r] \ar[d] &
		BG \ar@{=}[d] \\
	\Omega BG \ar[r] &
		PBG \ar[r] &
		BG.
}
\]
By Lemma \ref{lem_B_quasifibration}, the horizontal lines are homotopy fiber sequences.
Then, by the definition of the homotopy $\kappa_0$, the map $G\to\Omega BG$ in this diagram is equal to the above composite and hence is a weak equivalence.
\end{proof}

For $n<\infty$, we define a functor $D_n:\mathcal{A}_n^{\mathrm{R}}\to\mathbf{CG}$ by
\begin{align*}
	D_n(X,G)=(B_n(X,G,G)\cup(\mathcal{Q}_{n+1}\times X\times G^{\times (n+1)}))/\sim
\end{align*}
with the following identifications:
\begin{align*}
\begin{array}{cl}
\text{(i)} &
(\delta_k^0(\mathbf{t});x,g_1,\ldots,g_{n+1})\sim\left\{
	\begin{array}{ll}
		\lbrack\mathbf{t};xg_1,g_2,\ldots,g_{n+1},*\rbrack\in B_n(X,G,G) & \text{for }k=0 \\
		\lbrack\mathbf{t};x,g_1,\ldots,g_kg_{k+1},\ldots,g_{n+1},*\rbrack\in B_n(X,G,G) & \text{for }0<k<n \\
		\lbrack\mathbf{t};x,g_1,\ldots,g_{n+1}\rbrack\in B_n(X,G,G) & \text{for }k=n, \\
	\end{array}
	\right.
\\
\text{(ii)} &
(\mathbf{t};x,g_1,\ldots,g_{k-1},*,g_{k+1},\ldots,g_{n+1})\sim\lbrack\sigma_k(\mathbf{t});x,g_1,\ldots,g_{k-1},g_{k+1},\ldots,g_{n+1}\rbrack\in B_n(X,G,G).
\end{array}
\end{align*}
The induced maps are defined in the similar manner to $B_n$.
Then there are natural maps
\begin{align*}
&D_{n-1}(X,G)\subset B_n(X,G,G)\subset D_n(X,G), && \\
&D_n(X,G)\rightarrow B_{n+1}(X,G,*), && [\mathbf{t};x,g_1,\ldots,g_{n+1}]\mapsto[\mathbf{t};x,g_1,\ldots,g_{n+1},*],\\
&D_n(X,G)\rightarrow X, && [\mathbf{t};x,g_1,\ldots,g_{n+1}]\mapsto xg_1\cdots g_{n+1},
\end{align*}
for $(\mathbf{t};x,g_1,\ldots,g_{n+1})\in\mathcal{Q}_{n+1}\times X\times G^{\times (n+1)}$.
Like the proof of Lemma \ref{lem_B(X,G,G)}, one can see that the last map is a deformation retraction of the inclusion $X\subset D_n(X,G)$.
We denote $D_nG:=D_n(\ast,G)$.
The well-known homotopy equivalence $E_nG\simeq\Sigma^nG^{\wedge (n+1)}$ comes from the homeomorphism $E_nG/D_{n-1}G\cong\Sigma^nG^{\wedge (n+1)}$.
We also have a homeomorphism $D_nG/E_nG\cong\Sigma^{n+1}G^{\wedge (n+1)}$.
Then we obtain the well-known homotopy cofiber sequence
\[
	\Sigma^nG^{\wedge (n+1)}\to B_nG\xrightarrow{\iota_n^{n+1}}B_{n+1}G\to\Sigma^{n+1}G^{\wedge (n+1)}.
\]

\begin{rem}
The author could not find in the literature that the homotopy equivalence $E_nG\simeq\Sigma^nG^{\wedge (n+1)}$ can be obtained as above.
For example, Stasheff proved this homotopy equivalence by induction on $n$ and some homology Mayer--Vietoris sequence in \cite{Sta63a}.
\end{rem}

Let us consider a map $(D_{n-1}G,E_{n-1}G)\rightarrow(D_{n-1}G\vee\Sigma^nG^{\wedge n},E_{n-1}G)$ defined as follows:
for an element $\lbrack\gamma_{\mathbf{i}}^1(\mathbf{t};\mathbf{s}_1,\ldots,\mathbf{s}_r);*,g_1,\ldots,g_i\rbrack\in E_{n-1}G$,
\begin{align*}
\lbrack\gamma_{\mathbf{i}}^1(\mathbf{t};\mathbf{s}_1,\ldots,\mathbf{s}_r);*,g_1,\ldots,g_i\rbrack\mapsto\lbrack\mathbf{t};*,g_{i_1+1}\cdots g_{i_1+i_2},\ldots,g_{i_1+\cdots+i_{r-1}+1}\cdots g_i\rbrack\in E_{n-1}G,
\end{align*}
and for an element $(\delta_{n+1}^{t_{n+1}}(\gamma_{\mathbf{i}}^1(\mathbf{t};\mathbf{s}_1,\ldots,\mathbf{s}_r));*,g_1,\ldots,g_n)\in\mathcal{Q}_n\times *\times G^{\times n}$,
\begin{align*}
&\lbrack\delta_{n+1}^{t_{n+1}}(\gamma_{\mathbf{i}}^1(\mathbf{t};\mathbf{s}_1,\ldots,\mathbf{s}_r));*,g_1,\ldots,g_n\rbrack \\
&\mapsto\left\{
\begin{array}{ll}
\lbrack\mathbf{s}_1;g_1,\ldots,g_n\rbrack\in\Sigma^nG^{\wedge n} & (r=1)\\
\lbrack\delta_{n+1}^{t_{n+1}}(\mathbf{t});*,g_{i_1+1}\cdots g_{i_1+i_2},\ldots,g_{i_1+\cdots+i_{r-2}+1}\cdots g_{i_1+\cdots+i_{r-1}},*\rbrack\in E_{n-1}G & (1<r<n) \\
\lbrack\delta_{n+1}^{t_{n+1}}(\mathbf{t});*,g_1,\ldots,g_n\rbrack\in D_{n-1}G & (r=n),
\end{array}
\right.
\end{align*}
where $\Sigma^nG^{\wedge n}$ is considered as $\Sigma^nG^{\wedge n}=[0,1]^{\times n}\times G^{\times n}/(\partial[0,1]^{\times n}\times G^{\times n}\cup[0,1]^{\times n}\times T_nG)$.
This map is depicted in Figure \ref{fig_Dn}.
We call this map the \textit{pinch map}.
The pinch map restricted to $E_{n-1}G$ is naturally homotopic to the identity.
In the following lemma, we denote the reduced cone of a pointed space $X$ by $CX$.

\begin{figure}
\centering
\includegraphics[width=8cm,clip]{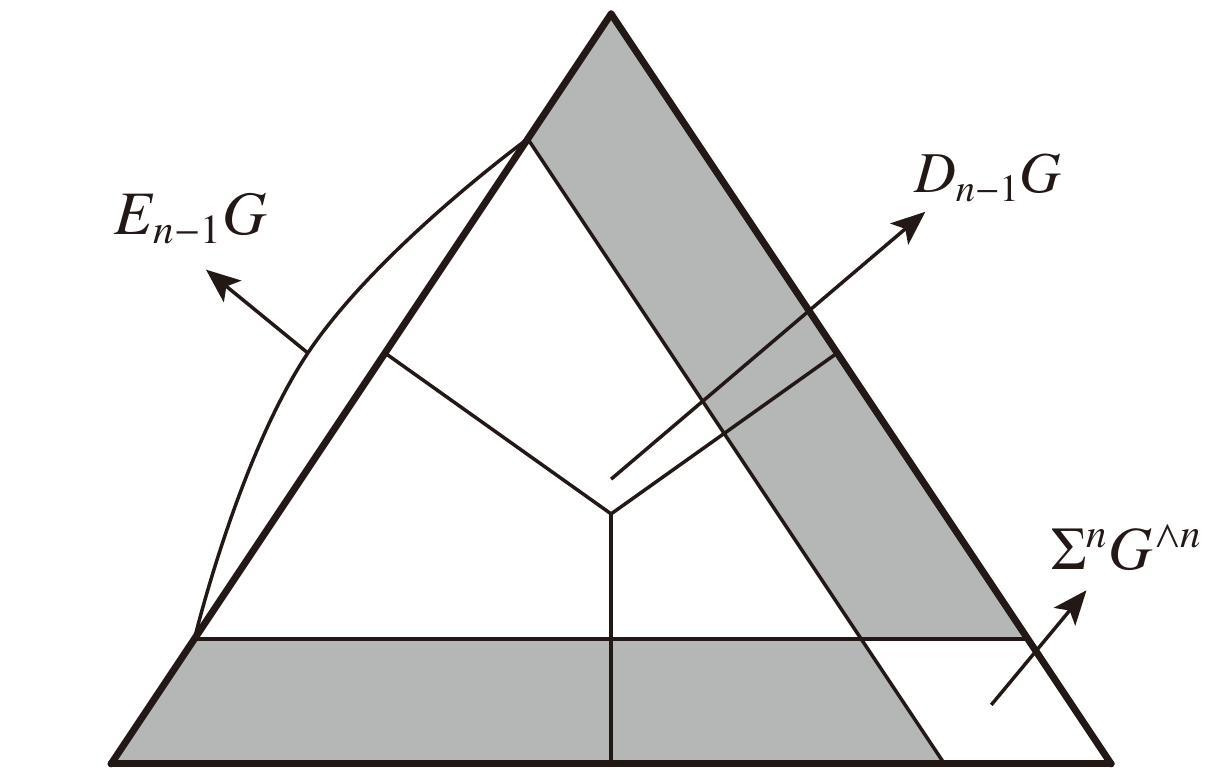}
\caption{}
\label{fig_Dn}
\end{figure}

\begin{lem}
\label{lem_pinch_map}
Let $G$ be a well-pointed topological monoid and $X$ a right $G$-space.
\begin{enumerate}[(i)]
\item
Then the pushout of the diagram
\begin{align*}
D_{n-1}(X,G)\leftarrow B_{n-1}(X,G,G)\rightarrow B_{n-1}(X,G,*)
\end{align*}
is naturally homeomorphic to $B_n(X,G,*)$ by the map induced from the map $D_{n-1}(X,G)\rightarrow B_n(X,G,*)$ as above and the inclusion $\iota_{n-1}^n\colon B_{n-1}(X,G,*)\rightarrow B_n(X,G,*)$.
\item
There exists a natural map $(CE_{n-1}G,E_{n-1}G)\rightarrow(D_{n-1}G,E_{n-1}G)$ that restricts to the identity on $E_{n-1}G$.
This map induces a diagram
\[
\xymatrix{
	CE_{n-1}G \ar[r] \ar[d] &
		CE_{n-1}G\vee\Sigma^nG^{\wedge n} \ar[d] \\
	D_{n-1}G \ar[r] &
		D_{n-1}G\vee\Sigma^nG^{\wedge n}
}
\]
commutative up to homotopy that restricts to the natural homotopy on $E_{n-1}G$, where the horizontal arrows represent the pinch maps and the right vertical map is the identity on $\Sigma^nG^{\wedge n}$.
\end{enumerate}
\end{lem}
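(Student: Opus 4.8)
The plan for part (i) is a direct cellwise identification. Both $B_n(X,G,*)$ and the pushout are quotients of the coproduct $\coprod_{0\le i\le n}\mathcal{Q}_i\times X\times G^{\times i}$, so I would construct the comparison map together with its inverse on these cells and check compatibility with the identifications (i) and (ii) defining the bar construction. The first thing to settle is that the map out of the pushout is well defined, i.e.\ that the two composites of $B_{n-1}(X,G,G)$ into $B_n(X,G,*)$---one through $B_{n-1}(X,G,G)\subset D_{n-1}(X,G)\to B_n(X,G,*)$, the other through the projection $B_{n-1}(X,G,G)\to B_{n-1}(X,G,*)$ followed by $\iota_{n-1}^n$---coincide. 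This is checked by tracing the face relations in the definition of $D_{n-1}$ against those of the bar construction. Granting this, the inverse is forced: the top cell $\mathcal{Q}_n\times X\times G^{\times n}$ of $B_n(X,G,*)$ is the top cell of $D_{n-1}(X,G)$, while the cells of bar degree $\le n-1$ come from $B_{n-1}(X,G,*)$. The only identification that is not literally the same on both sides is the attaching of the top cell along the face on which the last group coordinate is fed into $Y$; the pushout applies exactly the collapse of the $Y=G$ variable that $B_n(X,G,*)$ performs there, so the two attachments agree and the induced maps are mutually inverse homeomorphisms. Continuity in both directions is immediate because everything is given cellwise on compact cubes.

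For the first assertion of part (ii) I would produce $CE_{n-1}G\to D_{n-1}G$ from the natural contraction of $D_{n-1}G$. Specializing the deformation retraction $D_{n-1}(X,G)\simeq X$ (constructed as in the proof of Lemma \ref{lem_B(X,G,G)} using the homotopy $\kappa$) to $X=*$ shows that $D_{n-1}G$ is contractible through a natural, basepoint preserving homotopy $H$ with $H_0=\mathrm{id}$ and $H_1=*$. Restricting $H$ to $E_{n-1}G\subset D_{n-1}G$ gives a natural null-homotopy of the inclusion, which factors through the reduced cone and yields a map $\bar H\colon CE_{n-1}G\to D_{n-1}G$ whose restriction to $E_{n-1}G$ is the inclusion. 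Naturality of $\bar H$ is inherited from that of $H$.

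For the homotopy-commutative square, the key point is that both horizontal arrows are coactions pinching off a copy of $\Sigma^nG^{\wedge n}$ attached along the common subspace $E_{n-1}G$: the top one, say $\nu$, is the standard coaction of the cofibration $E_{n-1}G\hookrightarrow CE_{n-1}G$, whose cofiber $\Sigma E_{n-1}G$ is identified with $\Sigma^nG^{\wedge n}$ via $E_{n-1}G\simeq\Sigma^{n-1}G^{\wedge n}$, and the bottom one, $\mu$, is the given pinch of $D_{n-1}G$, whose cofiber is $D_{n-1}G/E_{n-1}G\cong\Sigma^nG^{\wedge n}$. I would compare the two composites $\mu\circ\bar H$ and $(\bar H\vee\mathrm{id})\circ\nu$ directly. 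On $E_{n-1}G$ they agree up to the natural homotopy already noted before the lemma (the restriction of the pinch of $D_{n-1}G$ to $E_{n-1}G$ being homotopic to the identity), and one extends this over the cone coordinate using the explicit form of the pinch map on $D_{n-1}G$ together with the contraction defining $\bar H$. To land the right vertical map as the identity on the wedge summand, I would check that $\bar H$ induces on cofibers the canonical equivalence $\Sigma E_{n-1}G\simeq\Sigma^nG^{\wedge n}$.

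I expect the crux of the argument to be this last identification in part (ii): verifying that the contraction coordinate of $CE_{n-1}G$ matches the extra cube direction of $D_{n-1}G$ so that $\bar H$ realizes the canonical identification on cofibers, and that the connecting homotopy for the square restricts on $E_{n-1}G$ to the prescribed natural one. Concretely this is a cellwise comparison of the three cases $r=1$, $1<r<n$, $r=n$ in the definition of the pinch map on $D_{n-1}G$ against the image under $\bar H$ of the cone coaction, keeping track of which cells are sent into the $\Sigma^nG^{\wedge n}$ summand. The analogous bookkeeping for the face relations is likewise the only nontrivial part of (i).
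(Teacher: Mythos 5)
Your proposal is correct and follows essentially the same route as the paper: part (i) is the same cellwise comparison via the pushout presentation of $B_n(X,G,*)$ from the proof of Lemma \ref{lem_homotopy_invariance_of_B}, and your map $\bar H$ in part (ii), obtained by restricting the contraction of $D_{n-1}G$ to $E_{n-1}G$ and passing to the cone, is exactly the paper's map induced by $\kappa_{n-1}$ from the proof of Lemma \ref{lem_B(X,G,G)}. The remaining verifications you flag (well-definedness on the glued face, and the case-by-case check of the pinch map against the cone coordinate) are precisely the points the paper leaves as ``straightforward.''
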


\begin{proof}
It is not difficult to observe that the naturally induced map from the pushout in (i) to $B_n(X,G,\ast)$ is a homeomorphism.
The inverse map is given by the pushout square as in the proof of Lemma \ref{lem_homotopy_invariance_of_B}.
Then the assertion (i) follows.

We regard the reduced cone $CE_{n-1}G$ as
\[
	CE_{n-1}G=\mathcal{Q}_1\times E_{n-1}G/(\mathcal{Q}_1\times\ast\cup\delta_1^0\times E_{n-1}G).
\]
Then the map $\kappa_{n-1}$ in the proof of Lemma \ref{lem_B(X,G,G)} induces a natural map $(CE_{n-1}G,E_{n-1}G)\rightarrow(D_{n-1}G,E_{n-1}G)$.
For this map, it is straightforward to see the desired homotopy commutativity of the diagram in (ii).
\end{proof}

For a space $X$, the space $\mathcal{W}X:=|\mathrm{Sing}(X)|$ defined to be the realization of the simplicial complex of singular simplices of $X$ is naturally a CW complex.
There is a natural weak equivalence $\mathcal{W}X\rightarrow X$.
In particular, if $G$ is a topological monoid or group, then $\mathcal{W}G$ is a CW complex equipped with the natural structure of a topological monoid or group, respectively, which is given by a cellular map $\mathcal{W}G\times\mathcal{W}G\rightarrow\mathcal{W}G$.
Moreover, the natural weak equivalence $\mathcal{W}G\rightarrow G$ is a homomorphism.
If $G$ is a topological group, so is $\mathcal{W}G$.
If $X$ is a right $G$-space, then $\mathcal{W}X$ is a right $\mathcal{W}G$-space with the cellular action $\mathcal{W}X\times\mathcal{W}G\rightarrow\mathcal{W}X$ and the natural maps $\mathcal{W}X\rightarrow X$ and $\mathcal{W}G\rightarrow G$ preserve the action.

\begin{lem}
\label{lem_B_htype_CW}
Let $G$ be a well-pointed topological monoid, $X$ a right $G$-space, and $Y$ a left $G$-space, all of which have the homotopy types of CW complexes.
Then $B_n(X,G,Y)$ has the homotopy type of a CW complex.
In particular, the natural map $B_n(\mathcal{W}X,\mathcal{W}G,\mathcal{W}Y)\rightarrow B_n(X,G,Y)$ is a homotopy equivalence.
\end{lem}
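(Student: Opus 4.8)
The plan is to establish the result in two steps: first to prove, by induction on $n$, that $B_n(X,G,Y)$ has the homotopy type of a CW complex, and then to deduce the ``in particular'' clause by Whitehead's theorem. The two facts I would use repeatedly are both due to Milnor: in $\mathbf{CG}$, a product of two spaces having the homotopy types of CW complexes again has the homotopy type of a CW complex, and in a pushout square in which one of the two legs out of the common source is a cofibration and the three given corners have the homotopy types of CW complexes, so does the pushout.

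For the induction, the base case $n=0$ is immediate since $B_0(X,G,Y)\cong X\times Y$. For the inductive step I would feed the pushout square of Lemma \ref{lem_homotopy_invariance_of_B}(i),
\[
\xymatrix{
\partial\mathcal{Q}_n\times X\times G^{\times n}\times Y\cup\mathcal{Q}_n\times X\times T_nG\times Y \ar[r] \ar[d] & B_{n-1}(X,G,Y) \ar[d] \\
\mathcal{Q}_n\times X\times G^{\times n}\times Y \ar[r] & B_n(X,G,Y),
}
\]
into Milnor's pushout statement. The left vertical arrow is a cofibration, the space $B_{n-1}(X,G,Y)$ has the homotopy type of a CW complex by the inductive hypothesis, and $\mathcal{Q}_n\times X\times G^{\times n}\times Y$ does too because $\mathcal{Q}_n$ is a simplex and products preserve the property. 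Thus everything reduces to the top-left corner $C:=\partial\mathcal{Q}_n\times X\times G^{\times n}\times Y\cup\mathcal{Q}_n\times X\times T_nG\times Y$.

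To handle $C$ I would realize it as the pushout of
\[
\partial\mathcal{Q}_n\times X\times G^{\times n}\times Y\longleftarrow\partial\mathcal{Q}_n\times X\times T_nG\times Y\longrightarrow\mathcal{Q}_n\times X\times T_nG\times Y,
\]
whose two legs are the cofibrations obtained by taking products with the fat-wedge inclusion $T_nG\subset G^{\times n}$ and with $\partial\mathcal{Q}_n\subset\mathcal{Q}_n$; here the first is a cofibration precisely because $G$ is well-pointed. Since $\partial\mathcal{Q}_n$ is a finite CW complex, $C$ then has the homotopy type of a CW complex as soon as the fat wedge $T_nG$ does, and this I would settle by a further short induction, writing $T_nG$ as the pushout of $T_{n-1}G\times G\leftarrow T_{n-1}G\times *\rightarrow G^{\times(n-1)}$ along cofibrations coming from $*\subset G$ and $T_{n-1}G\subset G^{\times(n-1)}$, starting from $T_1G=*$. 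This closes both inductions and yields that $B_n(X,G,Y)$ has the homotopy type of a CW complex.

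For the ``in particular'' clause, the spaces $\mathcal{W}X,\mathcal{W}G,\mathcal{W}Y$ are honest CW complexes, and because $\mathcal{W}G$ carries a cellular multiplication and $\mathcal{W}X,\mathcal{W}Y$ cellular actions, the structure maps assembling $B_n(\mathcal{W}X,\mathcal{W}G,\mathcal{W}Y)$ are cellular, so this space is a genuine CW complex. The natural transformation $(\mathcal{W}X,\mathcal{W}G,\mathcal{W}Y)\to(X,G,Y)$ is a map in $\mathcal{A}_n^{\mathrm{R}}\times_{\mathcal{A}_n}\mathcal{A}_n^{\mathrm{L}}$ with weakly equivalent underlying maps, so Lemma \ref{lem_homotopy_invariance_of_B}(ii) makes the induced map $B_n(\mathcal{W}X,\mathcal{W}G,\mathcal{W}Y)\to B_n(X,G,Y)$ a weak equivalence; as both spaces now have CW homotopy types, Whitehead's theorem promotes it to a homotopy equivalence. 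I expect the main obstacle to be the corner $C$: one must correctly exhibit it as an iterated pushout along cofibrations and reduce it to the fat wedge, and it is exactly there that the well-pointedness of $G$ is essential.
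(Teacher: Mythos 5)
Your proposal is correct and follows essentially the same route as the paper, whose proof is just the one-line instruction to argue "as in Lemma \ref{lem_homotopy_invariance_of_B}" using the homotopy invariance of homotopy pushouts; you have simply made explicit the induction on $n$ via that pushout square, the reduction of the corner $\partial\mathcal{Q}_n\times X\times G^{\times n}\times Y\cup\mathcal{Q}_n\times X\times T_nG\times Y$ to the fat wedge, and the Whitehead-theorem step for the ``in particular'' clause. No gaps.
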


\begin{proof}
Using the homotopy invariance of homotopy pushout, this lemma is similarly proved as in Lemma \ref{lem_homotopy_invariance_of_B}.
\end{proof}

\section{Mapping spaces from projective spaces}
\label{section_mapping_from_projective}

Now, let us prove our main theorem.

\begin{thm}
\label{mainthm}
Let $G$ be a well-pointed topological monoid having the homotopy type of a CW complex and $G'$ a well-pointed grouplike topological monoid.
Then the following composite is a weak equivalence.
\begin{align*}
	\mathcal{A}_n(G,G')\xrightarrow{B_n}\Map_0(B_nG,B_nG')\xrightarrow{(\iota_n)_{\#}}\Map_0(B_nG,BG').
\end{align*}
\end{thm}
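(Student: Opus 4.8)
The plan is to run an induction on $n$, comparing the two principal fibration structures we have already built: the one on the source $\mathcal{A}_n^1(G,G')$ (obtained from the homotopy pullback square right after Proposition~\ref{prp_equiv_mappingsp_A_n}, with fiber $\Map_0(\Sigma^{n-1}G^{\wedge n},G')$) and the one on the target $\Map_0(B_nG,BG')$ (obtained by applying $\Map_0(-,BG')$ to the cofiber sequence $\Sigma^{n-1}G^{\wedge n}\to B_{n-1}G\xrightarrow{\iota_{n-1}^n}B_nG$, via Proposition~\ref{prp_homomorphism_fiber_seq} and the pinch-map action of Section~\ref{section_bar_construction}). First I would replace $\mathcal{A}_n(G,G')$ by its deformation retract $\mathcal{A}_n^1(G,G')$ of forms of size $\ge 1$, so that the functor $B_n$ is defined on all of the source. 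The base case $n=1$ is the identification $B_1G\cong\Sigma G$ together with Lemma~\ref{lem_G_to_OmegaBG}: the composite $\mathcal{A}_1(G,G')=\Map_0(G,G')\to\Map_0(\Sigma G,BG')$ is adjoint to $G\xrightarrow{E}\Omega\Sigma G\xrightarrow{\Omega\iota_1}\Omega BG'$ postcomposed, and $G'$ grouplike makes this a weak equivalence by that lemma.

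For the inductive step I would show that the map $B_n$ (followed by $(\iota_n)_\#$) carries the source fibration to the target fibration, compatibly on base, total space, and fiber. The base of the source fibration is $\mathcal{A}_{n-1}^1(G,G')$ and the base of the target fibration is $\Map_0(B_{n-1}G,BG')$; on these the inductive hypothesis gives a weak equivalence. On fibers, both are $\Map_0(\Sigma^{n-1}G^{\wedge n},G')\simeq\Map_0(\Sigma^{n}G^{\wedge n},BG')$, the last equivalence coming from $G'\simeq\Omega BG'$ (Lemma~\ref{lem_G_to_OmegaBG}) and the loop–suspension adjunction; I would check $B_n$ restricts to this map on fibers by unwinding the explicit action formula for $f\cdot\alpha$ against the pinch-map action of Lemma~\ref{lem_pinch_map}(ii). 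The identification $E_{n-1}G\simeq\Sigma^{n-1}G^{\wedge n}$ and the homeomorphism $E_nG/D_{n-1}G\cong\Sigma^nG^{\wedge(n+1)}$ from Section~\ref{section_bar_construction} are what make the cube-parametrized $A_n$-form data line up with the cells of $B_nG$ added over $B_{n-1}G$. Once the map of fibrations induces weak equivalences on base and fiber, a five-lemma/long-exact-sequence comparison of the two principal fibration sequences forces a weak equivalence on total spaces, which is the assertion for $n$.

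The main obstacle I expect is the compatibility of the two principal fibration structures, i.e.\ checking that the square
\begin{align*}
\xymatrix{
\mathcal{A}_n^1(G,G') \ar[r]^-{B_n} \ar[d] & \Map_0(B_nG,BG') \ar[d] \\
\mathcal{A}_{n-1}^1(G,G') \ar[r]^-{B_{n-1}} & \Map_0(B_{n-1}G,BG')
}
\end{align*}
commutes up to a homotopy that is \emph{equivariant} for the two fiber actions, not merely that it commutes on the nose. Both actions are ``add a top-cell correction $\alpha$'': on the source via $f_n\mapsto f_n\cdot\alpha$, on the target via the pinch $B_nG\to B_nG\vee\Sigma^nG^{\wedge n}$. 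Matching them requires tracking how $B_n$ sends the degenerate/face relations of $\mathcal{Q}_i$ and the maps $\gamma_{\mathbf{i}}^\ell$ (Lemma~\ref{lem_identities_gamma}) through the quotient defining $B_nG$, and verifying that the top stratum $\mathcal{Q}_n\times G^{\times n}$ of the bar construction is glued exactly by the pinch map of Lemma~\ref{lem_pinch_map}. This is essentially a careful bookkeeping of cube coordinates, but it is the crux: granting it, everything else is the formal comparison of principal fibrations together with the homotopy invariance results of Proposition~\ref{prp_equiv_mappingsp_A_n} and Lemma~\ref{lem_homotopy_invariance_of_B}.
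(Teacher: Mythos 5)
Your outline of the inductive comparison of principal fibrations is essentially the paper's argument: the base case via $B_1G\cong\Sigma G$ and Lemma~\ref{lem_G_to_OmegaBG}, the ladder from $\Map_0(\Sigma^{n-1}G^{\wedge n},G')\to\mathcal{A}_n^1(G,G')\to\mathcal{A}_{n-1}^1(G,G')$ to $\Map_0(\Sigma^{n}G^{\wedge n},BG')\to\Map_0(B_nG,BG')\to\Map_0(B_{n-1}G,BG')$, and the compatibility of the two principal actions via the pinch map of Lemma~\ref{lem_pinch_map}. But there is a genuine gap in the final step: the five-lemma comparison does \emph{not} force a weak equivalence on total spaces, because neither fibration is surjective onto its base. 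An $A_{n-1}$-map need not extend to an $A_n$-map, and a map $B_{n-1}G\to BG'$ need not extend over $B_nG$, so both fibrations have empty fibers over some base points. The ladder gives isomorphisms on $\pi_i$ for $i\ge 1$ at every basepoint of $\mathcal{A}_n^1(G,G')$ and injectivity on $\pi_0$, but surjectivity on $\pi_0$ fails formally: given $F\colon B_nG\to BG'$, induction produces an $A_{n-1}$-map $f$ with $\iota_{n-1}\circ B_{n-1}f\simeq F|_{B_{n-1}G}$, and you must then show that $f$ actually admits an $n$-th form component realizing the component of $F$ --- the fibration comparison only tells you what the fiber over $f$ looks like \emph{if it is nonempty}.

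The paper supplies this missing surjectivity as a separate statement (Proposition~\ref{prp_mainthm}), proved by a genuinely different, geometric argument: first reduce to the case where $G'$ is a topological group (via $\mathcal{W}$ and a simplicial group model of $\Omega BG'$, using Proposition~\ref{prp_equiv_mappingsp_A_n} and Lemma~\ref{lem_homotopy_invariance_of_B}), and then use that $EG'\to BG'$ is a fiber bundle (Lemma~\ref{lem_B_quasifibration}) to lift $F|_{D_{n-1}G}\colon D_{n-1}G\to BG'$ to $EG'$ extending $D_{n-2}f$; the lift's restriction to the top stratum is exactly the sought $n$-th component $f_n$ of an $A_n$-form, and Lemma~\ref{lem_pinch_map}(i) identifies $B_nf$ with $F$ up to homotopy. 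You would need to add an argument of this kind (or some other proof that every component of $\Map_0(B_nG,BG')$ is hit). You also omit the case $n=\infty$, which in the paper is handled by a limit argument over the towers $\mathcal{A}_n^1(G,G')$ and $\Map_0(B_nG,BG')$ using the homotopy lifting property of the restriction maps; this is minor but should be said.
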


\begin{proof}
We prove this theorem by induction.
For $n=1$, the composite
\begin{align*}
\Map_0(G,G')\xrightarrow{\Sigma}
	\Map_0(\Sigma G,\Sigma G')\xrightarrow{(\iota_1)_{\#}}
	\Map_0(\Sigma G,BG')\cong
	\Map_0(G,\Omega BG')
\end{align*}
is equal to $\zeta_{\#}$, where $\zeta:G'\rightarrow\Omega BG'$ is as in Lemma \ref{lem_G_to_OmegaBG}.
Then, the composite $(\iota_1)_{\#}\circ B_1$ is a weak equivalence.

Suppose that the composite $(\iota_{n-1})_{\#}\circ B_{n-1}$ is a weak equivalence.
Consider the following homotopy commutative diagram of homotopy fiber sequences (see Section \ref{section_category_of_A_n-maps} for the definition of $\mathcal{A}_n^1$):
\begin{align*}
	\xymatrix{
		\Map_0(\Sigma^{n-1}G^{\wedge n},G') \ar[r] \ar[d]^{(\iota_1)_{\#}\circ\Sigma} & \mathcal{A}_n^1(G,G') \ar[r] \ar[d] & \mathcal{A}_{n-1}^1(G,G') \ar[d]^{\simeq} \\
		\Map_0(\Sigma^{n}G^{\wedge n},BG') \ar[r] & \Map_0(B_nG,BG') \ar[r]^-{(\iota_{n-1}^n)^{\#}} & \Map_0(B_{n-1}G,BG'),
	}
\end{align*}
where the homotopy fibers and maps between them are determined by the observation in Section \ref{section_category_of_A_n-maps} and Lemma \ref{lem_pinch_map}.
Moreover, one can see that the vertical maps preserve principal actions up to homotopy.
Since the central vertical arrow induces an surjection on $\pi_0$ by Proposition \ref{prp_mainthm}, which will be proved later, then it is a weak equivalence.

For $n=\infty$, the result follows since the projections $\mathcal{A}_n^1(G,G')\rightarrow\mathcal{A}_{n-1}^1(G,G')$ and $\Map_0(B_nG,B_nG')\rightarrow\Map_0(B_nG,BG')$ have the homotopy lifting property, and $\mathcal{A}_\infty^1(G,G')$ and $\Map_0(BG,BG')$ are the limits along the sequences of these projections, respectively.
\end{proof}

To complete the proof, all we have to show is that the composite $(\iota_n)_{\#}\circ B_n$ induces the surjection on $\pi_0$ for $n<\infty$.
First, we consider the case when $G'$ is a topological group.

\begin{lem}
\label{lem_mainthm}
Under the setting in Theorem \ref{mainthm}, suppose that $G'$ is a topological group and $n<\infty$.
Then the composite $(\iota_n)_{\#}\circ B_n$ induces the surjection on $\pi_0$.
\end{lem}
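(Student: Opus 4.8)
The plan is to translate the statement into the language of principal bundles, which is precisely the point of specializing away from general grouplike monoids to a topological group $G'$. The goal is to show that every pointed homotopy class $[\phi]\in\pi_0\Map_0(B_nG,BG')$ arises as $[\iota_n\circ B_nf]$ for some $A_n$-map $f$. By Lemma \ref{lem_B_htype_CW} the space $B_nG$ has the homotopy type of a CW complex, so pointed homotopy classes of maps $B_nG\to BG'$ are classified by based principal $G'$-bundles over $B_nG$, every such bundle being the pullback $\phi^{*}EG'$ of the universal bundle $EG'\to BG'$. On the other side, an $A_n$-map $f=(f,\{f_i\},\ell)$ endows $G'$ with the structure of a left $G$-space up to $A_n$ — an object of $\mathcal{A}_n^{\mathrm{L}}$ lying over $f$, built from the forms $f_i$ together with the multiplication of $G'$ — and the resulting projection $B_n(\ast,G,G')\to B_nG$, with $G'$ acting by right translation on the last coordinate, is a principal $G'$-bundle classified up to homotopy by $\iota_n\circ B_nf$. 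Thus the Lemma reduces to the geometric statement that every based principal $G'$-bundle over $B_nG$ is isomorphic to one of the form $B_n(\ast,G,G')$ for a suitable $A_n$-action.

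To produce such an action from a given bundle $P\to B_nG$, I would build the $A_n$-form inductively along the filtration $B_0G\subset B_1G\subset\cdots\subset B_nG$. Each inclusion $B_{k-1}G\subset B_kG$ has the homotopy extension property by Lemma \ref{lem_homotopy_invariance_of_B}(i) and is obtained by attaching the cells $\mathcal{Q}_k\times G^{\times k}$ as recorded by the pinch map of Lemma \ref{lem_pinch_map}. Suppose inductively that $P|_{B_{k-1}G}$ has been identified with $B_{k-1}(\ast,G,G')$ for an $A_{k-1}$-action, i.e. that $f_1,\ldots,f_{k-1}$ have been constructed. Because $G'$ is a topological group, $P$ is locally trivial with honest structure group $G'$, so over the contractible cell $\mathcal{Q}_k\times G^{\times k}$ the bundle is trivial, and comparison with the chosen trivialization over $B_{k-1}G$ is recorded by a clutching function $f_k\colon[0,\infty]^{\times(k-1)}\times G^{\times k}\to G'$. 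The gluing relations defining $B_kG$, rewritten by means of the cubical identities of Lemma \ref{lem_identities_gamma}, force $f_k$ to satisfy precisely the boundary conditions (i)--(iv) of an $A_k$-form, where group inverses in $G'$ are used both to solve for $f_k$ and to verify the multiplicativity relation (iii). Iterating up to $k=n$ yields an $A_n$-map $f$ with $P\cong B_n(\ast,G,G')$, hence $\iota_n\circ B_nf\simeq\phi$.

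The main obstacle will be this last translation: matching the clutching data of $P$ on the cells $\mathcal{Q}_k\times G^{\times k}$ with the combinatorial identities (i)--(iv), and checking that the inductive extension is always unobstructed. The latter is guaranteed because $P$ is globally defined over all of $B_nG$ from the outset, so each extension problem already has a solution on the nose and one only has to recognize that solution as a clutching function of the required form; the group structure of $G'$, through local triviality and the existence of inverses, is exactly what makes this recognition possible, and is the reason the argument is run here for groups rather than for general grouplike monoids. The latter case is then deduced in Proposition \ref{prp_mainthm} by replacing $G'$ with a weakly equivalent topological group, the homotopy invariance provided by Proposition \ref{prp_equiv_mappingsp_A_n} ensuring that surjectivity on $\pi_0$ is transported back.
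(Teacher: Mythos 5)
Your strategy---induct along the filtration $B_0G\subset\cdots\subset B_nG$, use that $EG'\to BG'$ is an honest principal bundle because $G'$ is a group, and read off the component $f_k$ from trivialization data over the newly attached piece---is in spirit the paper's own (Stasheff's lifting--extension argument). But there is a genuine gap at the central step, precisely where you appeal to ``$P$ is globally defined over all of $B_nG$, so each extension problem already has a solution on the nose.'' The clutching comparison between a trivialization of $P$ over the cell $\mathcal{Q}_k\times G^{\times k}$ and the inductively constructed identification over $B_{k-1}G$ is a $G'$-valued function defined only on the attaching region $\partial\mathcal{Q}_k\times G^{\times k}\cup\mathcal{Q}_k\times T_kG$, whereas $f_k$ must be defined on the whole cube. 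You therefore have to extend the inductively prescribed trivialization from the attaching region over the entire cell, and this extension is obstructed: the quotient of the cell by the attaching region is $\Sigma^kG^{\wedge k}$, so the obstruction lives in essentially $[\Sigma^{k-1}G^{\wedge k},G']$, which is exactly the group measuring the difference between $A_{k-1}$- and $A_k$-structures. The global existence of $P$ supplies \emph{some} trivialization over the cell (the characteristic map is null-homotopic---note that $\mathcal{Q}_k\times G^{\times k}$ itself is not contractible), but not one agreeing on the attaching region with the one dictated by $f_1,\dots,f_{k-1}$; and modifying the cell trivialization only changes the comparison function by the restriction of a map $\mathcal{Q}_k\times G^{\times k}\to G'$, which does not kill the obstruction class. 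Proving that this class vanishes is the actual content of the lemma, so as written the argument assumes what it must prove.

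The paper circumvents this by never posing the extension problem on the bare cell. It lifts $F$ itself through the fibration $EG'\to BG'$ (Lemma \ref{lem_B_quasifibration}, where the group hypothesis enters) over $D_{k-1}G$ relative to $D_{k-2}G$: both spaces are contractible, so $H^{m+1}(D_{k-1}G,D_{k-2}G;\pi_m(G'))=0$ and the relative lift $h$ exists unobstructed. The component $f_k$ is then read off from $h|_{E_{k-1}G}$, and the pushout $B_kG=B_{k-1}G\cup_{E_{k-1}G}D_{k-1}G$ of Lemma \ref{lem_pinch_map}(i), together with the fact that $h$ is homotopic to $D_{k-1}f$ rel $E_{k-1}G$ (automatic since $EG'$ is contractible), yields $F\simeq\iota_k B_kf$. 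If you replace your cell-by-cell clutching with this relative lifting over the contractible pair, your proof becomes the paper's; note also that the twisted bundle ``$B_n(\ast,G,G')$'' you invoke is not available from the paper's bar construction, which requires an honest $G$-action, so it would have to be constructed separately (it is in effect what the $D_k$- and $E_k$-spaces encode).
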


\begin{proof}
We show this by induction on $n$.
Let $F\colon B_nG\to BG'$ be a pointed map.
Suppose that there exists an $A_{n-1}$-map $f=(f,\{f_i\}_{i=1}^{n-1},\ell)$ such that $F$ restricts to $\iota_{n-1}\circ B_{n-1}f$ on $B_{n-1}G$.
Since $EG'\to BG'$ is a fiber bundle by Lemma \ref{lem_B_quasifibration}, there exists a dotted arrow $h$ in the diagram
\[
\xymatrix{
	D_{n-2}G \ar[r]^-{D_{n-2}f} \ar[d] &
		EG' \ar[d] \\
	D_{n-1}G \ar[r] \ar@{.>}[ur]^h &
		BG'
}
\]
such that this diagram commutes, where the arrow $D_{n-1}G\to BG'$ is the composite $D_{n-1}G\to B_nG\xrightarrow{F}BG'$.
Then one can extend the $A_{n-1}$-form of $f$ to the $A_n$-form $\{f_i\}_{i=1}^n$ uniquely such that the restriction of $h$ to $E_{n-1}G\to E_{n-1}G'$ coincides with $E_{n-1}f$.
Note that $h\colon D_{n-1}G\to EG'$ is homotopic to the composite $D_{n-1}G\xrightarrow{D_{n-1}f}D_{n-1}G'\to EG'$ rel $E_{n-1}G$.
Therefore, by (i) of Lemma \ref{lem_pinch_map}, $F$ is homotopic to $B_nf$.
\end{proof}

\begin{prp}
\label{prp_mainthm}
Under the setting in Theorem \ref{mainthm}, the composite $(\iota_n)_{\#}\circ B_n$ induces the surjection on $\pi_0$ for general $G'$ and $n<\infty$.
\end{prp}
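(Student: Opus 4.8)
The plan is to reduce the general grouplike case to the case of a topological group, which is exactly Lemma~\ref{lem_mainthm}, by exploiting the naturality of the composite $(\iota_n)_{\#}\circ B_n$ in the target monoid. Concretely, I would produce a topological group $\Gamma$ together with a chain of homomorphisms relating $G'$ and $\Gamma$, each a weak equivalence, and then transport the $\pi_0$-surjectivity across this chain by comparing homotopy-commutative squares.

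First I would record that a single homomorphism weak equivalence $\varphi\colon G'\to G''$ of well-pointed grouplike topological monoids induces weak equivalences on both sides. Viewing $\varphi$ as a weak $A_n$-equivalence of size $0$, naturality of $B_n$ and of $\iota_n$ gives a square \[ \xymatrix{ \mathcal{A}_n(G,G') \ar[r]^-{(\iota_n)_{\#}\circ B_n} \ar[d]_-{\varphi_{\#}} & \Map_0(B_nG,BG') \ar[d]^-{(B\varphi)_{\#}} \\ \mathcal{A}_n(G,G'') \ar[r]_-{(\iota_n)_{\#}\circ B_n} & \Map_0(B_nG,BG'') } \] commuting up to homotopy. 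The left-hand map is a weak equivalence by Proposition~\ref{prp_equiv_mappingsp_A_n}, whose hypotheses constrain only the source $G$ and impose nothing on the targets. For the right-hand map, $B_nG$ has the homotopy type of a CW complex by Lemma~\ref{lem_B_htype_CW}, and $B\varphi\colon BG'\to BG''$ is a weak equivalence by Lemma~\ref{lem_homotopy_invariance_of_B}(ii); since $BG'$ need not itself have CW homotopy type, I would apply Proposition~\ref{prp_equiv_map_sp} only after interposing the replacement $B\mathcal{W}(-)\to B(-)$, which by Lemmas~\ref{lem_homotopy_invariance_of_B} and~\ref{lem_B_htype_CW} is a weak equivalence onto a space of CW type. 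A two-out-of-three argument then shows $(B\varphi)_{\#}$ is a weak equivalence. As both verticals are bijective on $\pi_0$, the two horizontal maps are simultaneously $\pi_0$-surjective or not, so the statement for $G'$ is equivalent to the statement for $G''$, and the same holds along a zig-zag of such homomorphisms.

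It therefore remains to connect $G'$ to a topological group $\Gamma$ by a zig-zag of homomorphism weak equivalences. I would first pass to $\mathcal{W}G'$, a CW topological monoid, grouplike because $\pi_0(\mathcal{W}G')\cong\pi_0(G')$, with the natural homomorphism $\mathcal{W}G'\to G'$ a weak equivalence. Then I would invoke the classical fact that a grouplike topological monoid of CW homotopy type is related, through homomorphisms that are weak equivalences, to a genuine topological group (for instance via the realization of the Kan loop group of $B\mathcal{W}G'$, or Milnor's construction). For such $\Gamma$ the map $E\Gamma\to B\Gamma$ of Lemma~\ref{lem_B_quasifibration} is an honest fiber bundle, so Lemma~\ref{lem_mainthm} provides the $\pi_0$-surjectivity for $\Gamma$; transporting it back through the square above completes the proof.

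I expect the main obstacle to be precisely this last replacement: unlike the rest of the argument, which is formal manipulation of naturality squares, producing a topological group model and a homomorphism comparison to $G'$ is not automatic and relies on an external construction. A secondary technical point is the propagation of the CW-homotopy-type hypotheses needed to apply Proposition~\ref{prp_equiv_map_sp}, which is exactly why the functor $\mathcal{W}$ must be inserted wherever the classifying space of a non-CW monoid appears.
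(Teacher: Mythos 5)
Your overall strategy --- transporting $\pi_0$-surjectivity across weak equivalences of the target so as to land in the topological-group case of Lemma~\ref{lem_mainthm} --- is exactly the paper's, and your naturality square together with Proposition~\ref{prp_equiv_mappingsp_A_n}, Lemma~\ref{lem_B_htype_CW} and Lemma~\ref{lem_homotopy_invariance_of_B} is the right transport mechanism. The gap is in the bridge you flagged yourself: you need a zig-zag of \emph{homomorphisms} of topological monoids, each a weak equivalence, connecting $G'$ to a topological group $\Gamma$, and the constructions you name do not supply one. The realization of the Kan loop group of $\mathrm{Sing}(B\mathcal{W}G')$ (or Milnor's construction) produces a topological group $\Gamma$ of CW type with $B\Gamma\simeq BG'$, hence with $\Gamma\simeq\Omega B\Gamma\simeq\Omega BG'\simeq G'$ as spaces; but none of these comparison maps is a monoid homomorphism, and rectifying the space-level equivalence $\Gamma\simeq G'$ into a homomorphism zig-zag is a strictification problem of essentially the same depth as the proposition you are trying to prove. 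Passing to $\mathcal{W}G'$ handles only the CW issue, not this one.

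The paper closes exactly this gap by demanding less of the comparison map. It takes from simplicial homotopy theory only the statement that there exists a topological group $\tilde G$ which is a CW complex with $B\tilde G$ homotopy equivalent to $BG'$ (using Lemma~\ref{lem_B_htype_CW} to know $BG'$ has CW type). Since $\tilde G$ is a group, Lemma~\ref{lem_mainthm} already establishes Theorem~\ref{mainthm} with target $\tilde G$; applying that instance to the homotopy equivalence $BG'\to B\tilde G$ produces an $A_\infty$-map $G'\to\tilde G$ whose underlying map is a weak equivalence, and this is invertible in the homotopy category by Corollary~\ref{cor_inverse_of_A_n-equivalence}. Your transport square then goes through verbatim with this single $A_\infty$-equivalence in place of a homomorphism zig-zag: Proposition~\ref{prp_equiv_mappingsp_A_n} is stated for weak $A_n$-equivalences rather than homomorphisms, and Lemma~\ref{lem_homotopy_invariance_of_B}(ii) shows the induced map of classifying spaces is still a weak equivalence. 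So the fix is not to find a better group model but to weaken the bridge from a homomorphism zig-zag to an $A_\infty$-equivalence. (A minor side point: Proposition~\ref{prp_equiv_mappingsp_A_n} does implicitly require the targets to be grouplike, since its proof uses the principal-fibration description of $\mathcal{A}_n^1$, so it is not quite true that it ``imposes nothing on the targets''; this is harmless in the present situation.)
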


\begin{proof}
Considering the weak equivalences $\mathcal{W}G'\to G'$ and $B\mathcal{W}G'\to BG'$, we may assume that $G'$ has the homotopy type of a CW complex by Proposition \ref{prp_equiv_mappingsp_A_n}.
By the results on simplicial homotopy theory (for example, see \cite{May67}), we can find a topological group $\tilde G$ which is a CW complex such that the classifying space $B\tilde{G}$ is homotopy equivalent to $BG'$, where we use the fact that $BG'$ has the homotopy type of a CW complex by Lemma \ref{lem_B_htype_CW}.
Then, by Theorem \ref{mainthm} for $G'$ and $\tilde G$ and Corollary \ref{cor_inverse_of_A_n-equivalence}, there exists an $A_\infty$-equivalence $G'\to \tilde G$.
Combining Proposition \ref{prp_equiv_mappingsp_A_n} and Lemma \ref{lem_homotopy_invariance_of_B} and \ref{lem_mainthm}, we obtain the desired surjectivity.
\end{proof}

This completes the proof of Theorem \ref{mainthm}.

Let $X$ be a path-connected well-pointed space of the homotopy type of a CW complex.
Then one can show that the Moore based loop space $\Omega^{\mathrm{M}}X$ is a well-pointed grouplike topological monoid having the homotopy type of a CW complex.

\begin{rem}
Let $X$ be a well-pointed space of the homotopy type of a CW complex.
Then there is a natural homotopy equivalence into the path-component containing the basepoint
\begin{align*}
	B\Omega^{\mathrm{M}}X=B(*,\Omega^{\mathrm{M}}X,*)\xleftarrow{\simeq}B(P^{\mathrm{M}}X,\Omega^{\mathrm{M}}X,*)\rightarrow X,
\end{align*}
where the left arrow is induced by the map $P^{\mathrm{M}}X\rightarrow *$ and the right arrow is induced by the evaluation $e:P^{\mathrm{M}}X\rightarrow X$.
This homotopy equivalence is checked by the similar argument to the proof of the following corollary.

Through this homotopy equivalence, Theorem \ref{mainthm} is recognized as the adjunction
\begin{align*}
	\mathcal{A}_n(G,\Omega^{\mathrm{M}}X)\simeq\Map_0(B_nG,X)
\end{align*}
in certain sense.
We will call the correspondence of the homotopy classes through this weak equivalence or that of Theorem \ref{mainthm} as the \textit{adjoint}.
With respect to this adjunction, we consider the unit map as in the next corollary.
\end{rem}

\begin{rem}
Let $G$ be a well-pointed topological monoid and $X$ a pointed space.
Norio Iwase pointed out to the author that the weak equivalence
\[
	\mathcal{A}_n(G,\Omega^{\mathrm{M}}X)\simeq\Map_0(B_nG,X)
\]
stated above is in fact a \textit{homotopy equivalence}.
The inverse map of this equivalence is given as in the Stasheff's lifting-extension argument in the proof of \cite[Theorem 4.5]{Sta63b}, which can be done continuously.
\end{rem}

\begin{rem}
As in \cite{Sta63a} and \cite{IM89}, projective spaces and $A_n$-maps are defined for $A_n$-spaces as well.
Then it is natural to ask whether Theorem \ref{mainthm} can be generalized for an $A_n$-space $G$ and an $A_\infty$-space $G'$ or not.
This might be carried out but needs many preparations about $A_n$-spaces.
This problem will be postponed for now.
\end{rem}

\begin{cor}
\label{cor_OmegaB_n}
Let $G$ be a well-pointed topological monoid having the homotopy type of a CW complex.
Then there exists an $A_n$-map $\eta:G\rightarrow\Omega^{\mathrm{M}}B_nG$ such that the adjoint $\iota_n\circ B_n\eta:B_nG\rightarrow B\Omega^{\mathrm{M}}B_nG$ is a homotopy equivalence and there is a homotopy commutative diagram
\begin{align*}
\xymatrix{
B_nG \ar[rr]^-{\iota_n\circ B_n\eta} \ar@{=}[d]
	&
	& B\Omega^{\mathrm{M}}B_nG \ar@{=}[d] \\
B_nG 
	& B(P^{\mathrm{M}}B_nG,\Omega^{\mathrm{M}}B_nG,*) \ar[l]_-{e_*} \ar[r]^-{q}
	& B\Omega^{\mathrm{M}}B_nG,
}
\end{align*}
where the map $e_*$ is induced from the evaluation $e:P^{\mathrm{M}}B_nG\rightarrow B_nG$ and $q$ from the map $P^{\mathrm{M}}B_nG\rightarrow *$.
Moreover, the composite
\begin{align*}
\mathcal{A}_\infty(\Omega^{\mathrm{M}}B_nG,G')\rightarrow\mathcal{A}_n(\Omega^{\mathrm{M}}B_nG,G')\xrightarrow{\eta^{\#}}\mathcal{A}_n(G,G')
\end{align*}
is a weak equivalence for any grouplike topological monoid $G'$.
\end{cor}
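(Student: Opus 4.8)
The plan is to first produce the $A_n$-map $\eta\colon G\to\Omega^{\mathrm M}B_nG$ by inverting the weak equivalence of Theorem \ref{mainthm} at a distinguished element, and then to promote the resulting adjunction statement about $G$ to a full weak equivalence of mapping categories. First I would apply Theorem \ref{mainthm} with $G'=\Omega^{\mathrm M}B_nG$, giving a weak equivalence $\mathcal{A}_n(G,\Omega^{\mathrm M}B_nG)\xrightarrow{\simeq}\Map_0(B_nG,B(\Omega^{\mathrm M}B_nG))$. Under the identification $B\Omega^{\mathrm M}B_nG\simeq B_nG$ coming from the zig-zag in the preceding Remark (induced by $e\colon P^{\mathrm M}B_nG\to B_nG$ and the collapse $P^{\mathrm M}B_nG\to *$), the identity map $\mathrm{id}_{B_nG}$ corresponds to a homotopy class on the right-hand side; I define $\eta$ to be any $A_n$-map in the preimage of that class. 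By construction its adjoint $\iota_n\circ B_n\eta\colon B_nG\to B\Omega^{\mathrm M}B_nG$ is homotopic to the identity under this identification, which is exactly the homotopy commutativity of the displayed square; the two maps $e_*$ and $q$ in the lower row are precisely the zig-zag realizing the equivalence $B\Omega^{\mathrm M}B_nG\simeq B_nG$, so the diagram commutes up to homotopy essentially by the definition of $\eta$.

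Next I would establish that $\iota_n\circ B_n\eta$ is an honest homotopy equivalence, not merely a weak one. This follows because both $B_nG$ and $B\Omega^{\mathrm M}B_nG$ have the homotopy types of CW complexes—the former since $G$ does and by Lemma \ref{lem_B_htype_CW}, the latter likewise since $\Omega^{\mathrm M}B_nG$ is a well-pointed grouplike monoid of CW homotopy type (as noted just before this corollary)—so Whitehead's theorem upgrades the weak equivalence to a genuine one.

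For the final assertion about $\mathcal{A}_\infty(\Omega^{\mathrm M}B_nG,G')\to\mathcal{A}_n(\Omega^{\mathrm M}B_nG,G')\xrightarrow{\eta^{\#}}\mathcal{A}_n(G,G')$, the strategy is to translate everything through Theorem \ref{mainthm} into a statement about mapping spaces. Applying the theorem to both source monoids, the composite becomes, up to the weak equivalences $B_n$, the map
\begin{align*}
\Map_0(B\Omega^{\mathrm M}B_nG,BG')\to\Map_0(B_n\Omega^{\mathrm M}B_nG,BG')\xrightarrow{(B_n\eta)^{\#}}\Map_0(B_nG,BG').
\end{align*}
The first arrow is induced by the inclusion $\iota_n\colon B_n\Omega^{\mathrm M}B_nG\to B\Omega^{\mathrm M}B_nG$, and the composite of the two is precomposition by $\iota_n\circ B_n\eta$. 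Since $\iota_n\circ B_n\eta$ is a homotopy equivalence of CW complexes by the first part, precomposition with it induces a weak equivalence of pointed mapping spaces by Proposition \ref{prp_equiv_map_sp}. Tracing this back through the naturality of the weak equivalences in Theorem \ref{mainthm} and Proposition \ref{prp_equiv_mappingsp_A_n} then yields that the original composite is a weak equivalence.

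The main obstacle I anticipate is the bookkeeping needed to verify that the adjoint correspondence of Theorem \ref{mainthm} is compatible with the two different source monoids $G$ and $\Omega^{\mathrm M}B_nG$ simultaneously, so that the composite $\eta^{\#}$ on the level of $A_n$-maps really does correspond to precomposition by $\iota_n\circ B_n\eta$ on the mapping-space side. This requires naturality of the equivalence $B_n$ in the source variable together with the behaviour of the functor $B_n$ under composition of $A_n$-maps (Definition \ref{dfn_composition}); once that identification is in place, Proposition \ref{prp_equiv_map_sp} does the rest.
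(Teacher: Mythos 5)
Your proposal follows essentially the same route as the paper: $\eta$ is produced from the $\pi_0$-surjectivity of Theorem \ref{mainthm} applied to the class of the zig-zag $B_nG\xleftarrow{e_*}B(P^{\mathrm{M}}B_nG,\Omega^{\mathrm{M}}B_nG,*)\xrightarrow{q}B\Omega^{\mathrm{M}}B_nG$, and the last assertion is obtained by transporting the composite through Theorem \ref{mainthm} to the map $(B_n\eta)^{\#}\circ(\iota_n)^{\#}$ on pointed mapping spaces, that is, precomposition with the homotopy equivalence $\iota_n\circ B_n\eta$. The one step you skip is that the corollary permits an arbitrary grouplike $G'$ while Theorem \ref{mainthm} requires the target to be well-pointed; the paper first replaces $G'$ by $\mathcal{W}G'$ via Proposition \ref{prp_equiv_mappingsp_A_n} before passing to mapping spaces, and with that insertion your argument is complete.
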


\begin{proof}
The first half follows from the following commutative diagram and Theorem \ref{mainthm}:
\begin{align*}
\xymatrix{
	\Omega^{\mathrm{M}}B_nG \ar[d] 
		& \Omega^{\mathrm{M}}B_nG \ar[d] \ar@{=}[l] \ar@{=}[r]
		& \Omega^{\mathrm{M}}B_nG \ar[d] \\
	P^{\mathrm{M}}B_nG \ar[d]
		& B(P^{\mathrm{M}}B_nG,\Omega^{\mathrm{M}}B_nG,\Omega^{\mathrm{M}}B_nG) \ar[d] \ar[l] \ar[r]
		& B(*,\Omega^{\mathrm{M}}B_nG,\Omega^{\mathrm{M}}B_nG) \ar[d] \\
	B_nG
		& B(P^{\mathrm{M}}B_nG,\Omega^{\mathrm{M}}B_nG,*) \ar[l] \ar[r]
		& B(*,\Omega^{\mathrm{M}}B_nG,*).
}
\end{align*}
The latter half can be checked by the following commutative diagram:
\begin{align*}
\xymatrix{
	\mathcal{A}_\infty(\Omega^{\mathrm{M}}B_nG,G') \ar[r]
		& \mathcal{A}_n(\Omega^{\mathrm{M}}B_nG,G') \ar[r]^-{\eta^{\#}}
		& \mathcal{A}_n(G,G') \\
	\mathcal{A}_\infty(\Omega^{\mathrm{M}}B_nG,\mathcal{W}G') \ar[r] \ar[u]_-{\simeq} \ar[d]^-{\simeq}
		& \mathcal{A}_n(\Omega^{\mathrm{M}}B_nG,\mathcal{W}G') \ar[r]^-{\eta^{\#}} \ar[u]_-{\simeq} \ar[d]^-{\simeq}
		& \mathcal{A}_n(G,\mathcal{W}G') \ar[u]_-{\simeq} \ar[d]^-{\simeq} \\
	\Map_0(B\Omega^{\mathrm{M}}B_nG,B\mathcal{W}G') \ar[r]^-{(\iota_n)^{\#}}
		& \Map_0(B_n\Omega^{\mathrm{M}}B_nG,B\mathcal{W}G') \ar[r]^-{(B_n\eta)^{\#}}
		& \Map_0(B_nG,B\mathcal{W}G'),
}
\end{align*}
where the composite of the arrows in the bottom row is a weak equivalence.
\end{proof}

\begin{rem}
The $A_n$-map $\eta\colon G\rightarrow\Omega^{\mathrm{M}}B_nG$ has been studied by Stasheff in \cite{Sta70}.
Later, C. A. McGibbon \cite{McG82} proved that $\eta$ is never an $A_{n+1}$-map for any connected non-contractible CW complex $G$.
\end{rem}

\section{Application: Evaluation fiber sequences}
\label{section_evaluation}
For simplicity, we discuss only about topological groups rather than general topological monoids.
But, using the technique of simplicial homotopy theory as in the proof of Proposition \ref{prp_mainthm}, our result may admit some generalization.

For the fundamental facts on the space of bundle maps, see Gottlieb's paper \cite{Got72}.
Let $G$ be a well-pointed topological group, $B$ a well-pointed space of the homotopy type of a CW complex and $P$ a principal $G$-bundle over $B$ classified by $\epsilon\colon B\to BG$.
The \textit{gauge group} $\mathcal{G}(P)$ of $P$ is the topological group consisting of the $G$-equivariant self maps on $P$ that induces the identity on the quotient $P/G\cong B$.
Denote the space of $G$-equivariant maps $P\rightarrow EG$ by $\mathcal{E}(P,EG)$.
The gauge group $\mathcal{G}(P)$ acts on $\mathcal{E}(P,EG)$ from the right by composition.
Then there is a Serre fibration $\beta:\mathcal{E}(P,EG)\rightarrow\Map(B,BG;\epsilon)$ which assigns the induced map on the quotient $P/G=B\rightarrow EG/G=BG$.
Moreover, it is known that $\beta$ is a principal $\mathcal{G}(P)$-fibration and $\pi_i(\mathcal{E}(P,EG))=0$ for all $i\ge 0$.
Let $\rho:\mathcal{E}(P,EG)\rightarrow EG$ be the evaluation at the basepoint.
This map is equivariant through the homomorphism $\rho:\mathcal{G}(P)\rightarrow\mathcal{G}(G)\cong G$ defined by the evaluation at the basepoint.
Define the subspace $\mathcal{E}_0(P,EG):=\rho^{-1}(*)\subset\mathcal{E}(P,EG)$ and the closed subgroup $\mathcal{G}_0(P):=\rho^{-1}(*)\subset\mathcal{G}(P)$.
One can check that $\beta:\mathcal{E}_0(P,EG)\rightarrow\Map_0(B,BG;\epsilon)$ is a Serre fibration with a principal action by $\mathcal{G}_0(P)$ and $\pi_i(\mathcal{E}_0(P,EG))=0$ for all $i>0$.

For $g\in G$, the conjugation and the left translation
\begin{align*}
&\alpha_g:G\rightarrow G,&&\alpha_g(x)=gxg^{-1},\\
&L_g:G\rightarrow G, &&L_g(x)=gx
\end{align*}
induce a $G$-equivariant map $E\alpha_g:=B(*,\alpha_g,L_g):EG\rightarrow EG$ and a map $B\alpha_g:BG\rightarrow BG$.
These maps satisfy $E\alpha_{gg'}=E\alpha_g\circ E\alpha_{g'}$ and $B\alpha_{gg'}=B\alpha_g\circ B\alpha_{g'}$, and the following diagram commutes:
\begin{align*}
\xymatrix{
	EG \ar[r]^-{E\alpha_g} \ar[d]
		& EG \ar[d] \\
	BG \ar[r]^-{B\alpha_g}
		& BG.
}
\end{align*}

From now on, we use the notation $\mathcal{E}:=\mathcal{E}(P,EG)$, $\mathcal{E}_0:=\mathcal{E}_0(P,EG)$, $\mathcal{G}:=\mathcal{G}(P)$, $\mathcal{G}_0:=\mathcal{G}_0(P)$, $M:=\Map(B,BG;\epsilon)$ and $M_0:=\Map_0(B,BG;\epsilon)$ for simplicity of diagrams.
Following \cite[Section 6]{KK10}, consider free actions
\begin{align*}
&\mathcal{G}\times(EG\times\mathcal{E}_0)\rightarrow EG\times\mathcal{E}_0,
	&& (\varphi,(u,f))\mapsto(u\rho(\varphi)^{-1},E\alpha_{\rho(\varphi)}\circ f\circ\varphi^{-1}), \\
&G\times (EG\times M_0)\rightarrow EG\times M_0,
	&& (g,(F,u))\mapsto(ug^{-1},B\alpha_g\circ F).
\end{align*}
Here, the induced map from $\mathcal{G}$ into each fiber of the Serre fibration $\mathcal{E}_0\times EG\to M_0\times_GEG$ is a homeomorphism.

Let us consider the commutative diagram
\[
\xymatrix{
	\mathcal{G} \ar[r] &
		EG\times\mathcal{E}_0 \ar[r] &
		EG\times_GM_0 \\
	\mathcal{WG} \ar[u] \ar@{=}[d] \ar[r] &
		B(\mathcal{W}EG\times\mathcal{WE}_0,\mathcal{WG},\mathcal{WG}) \ar[u] \ar[d] \ar[r] &
		B(\mathcal{W}EG\times\mathcal{WE}_0,\mathcal{WG},\ast) \ar[u] \ar[d] \\
	\mathcal{WG} \ar[r] &
		B(\ast,\mathcal{WG},\mathcal{WG}) \ar[r] &
		B(\ast,\mathcal{WG},\ast) \\
	\mathcal{WG} \ar[d] \ar@{=}[u] \ar[r] &
		B(\mathcal{WE},\mathcal{WG},\mathcal{WG}) \ar[d] \ar[u] \ar[r] &
		B(\mathcal{WE},\mathcal{WG},\ast) \ar[d] \ar[u] \\
	\mathcal{G} \ar[r] &
		\mathcal{E} \ar[r] &
		M.
}
\]
Then each row is a Serre fibration and each vertical arrow is a weak equivalence.
By the similar argument, we obtain the following commutative diagram:
\[
\xymatrix{
	& EG\times M_0 \ar[r] \ar `l_[l]`[d]`[dddd][dddd] &
		EG\times_GM_0 \ar[r] &
		BG \\
	& B(\mathcal{W}EG\times\mathcal{WE}_0,\mathcal{WG}_0,\ast) \ar[u] \ar[d] \ar[r] \ar `l[d]`[dd][dd] &
		B(\mathcal{W}EG\times\mathcal{WE}_0,\mathcal{WG},\ast) \ar[u] \ar[d] \ar[r] &
		B(\mathcal{W}EG,\mathcal{W}G,\ast) \ar[u] \ar[d] \\
	& B(\ast,\mathcal{WG}_0,\ast) \ar[r] &
		B(\ast,\mathcal{WG},\ast) \ar[r] &
		B(\ast,\mathcal{W}G,\ast) \\
	& B(\mathcal{WE}_0,\mathcal{WG}_0,\ast) \ar[u] \ar[d] \ar[r] &
		B(\mathcal{WE},\mathcal{WG},\ast) \ar[u] \ar[d] \ar[r] &
		B(\mathcal{W}EG,\mathcal{W}G,\ast) \ar[u] \ar[d] \\
	& M_0 \ar[r] &
		M \ar[r] &
		BG,
}
\]
where all the vertical arrows are weak equivalences.
From this diagram, we obtain the following.

\begin{lem}
\label{lem_evaluation_Borel}
Let $G$ be a well-pointed topological group, $B$ a well-pointed space of the homotopy type of a CW complex, and $P$ a principal $G$-bundle over $B$.
Then there exists a CW complex $X,X'$ and the following homotopy commutative diagram
\begin{align*}
\xymatrix{
	\Map_0(B,BG;\epsilon) \ar[r] \ar`l[d]`[dd]_{\operatorname{id}}[dd] &
		EG\times_G\Map_0(B,BG;\epsilon) \ar[r] &
		BG \\
	X \ar[u]^-{\simeq} \ar[d]_-{\simeq} \ar[r] &
		X' \ar[u]^-{\simeq} \ar[d]_-{\simeq} \ar[r] &
		BG \ar@{=}[u] \ar@{=}[d] \\
	\Map_0(B,BG;\epsilon) \ar[r] &
		\Map(B,BG;\epsilon) \ar[r] &
		BG,
}
\end{align*}
where the vertical arrows are weak equivalences.
\end{lem}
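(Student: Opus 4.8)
The plan is to read the asserted diagram off the two large diagrams already displayed above, which together provide a zig-zag of weak equivalences of fibration sequences joining the homotopy-orbit (Borel) fibration to the evaluation fibration. For the top row I would take the first row $EG\times M_0\to EG\times_G M_0\to BG$ of the second diagram, noting that $EG$ is contractible, so $EG\times M_0\simeq M_0$ and the map $EG\times M_0\to EG\times_G M_0$ is homotopic to the inclusion of the fibre of the Borel fibration $EG\times_G M_0\to BG$; this is exactly the claimed top row $M_0\to EG\times_G M_0\to BG$. The bottom row $M_0\to M\to BG$ is the evaluation fibration, namely the last row of the second diagram. For the middle row I would take the ``pure'' bar-construction row $B(\ast,\mathcal{W}\mathcal{G}_0,\ast)\to B(\ast,\mathcal{W}\mathcal{G},\ast)\to B(\ast,\mathcal{W}G,\ast)$ and set $X:=B(\ast,\mathcal{W}\mathcal{G}_0,\ast)$ and $X':=B(\ast,\mathcal{W}\mathcal{G},\ast)$, which are CW complexes by Lemma \ref{lem_B_htype_CW}.

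Next I would produce the vertical weak equivalences. In each displayed diagram the columns are zig-zags of weak equivalences that pass, between the middle row and its neighbours, only through bar constructions of the CW monoids $\mathcal{W}\mathcal{G}_0$, $\mathcal{W}\mathcal{G}$, $\mathcal{W}G$ acting on CW spaces; since these neighbouring rows and the middle row all consist of CW complexes, the weak equivalences between them are homotopy equivalences (Whitehead), so I may invert them and compose to obtain direct weak equivalences $X\to M_0$, $X'\to EG\times_G M_0$ and $X'\to M$. The individual weak equivalences come from the natural maps $\mathcal{W}(-)\to(-)$; from the contractibility of $EG$ and $\mathcal{W}EG$ and the weak contractibility of $\mathcal{E}$ and $\mathcal{E}_0$, which by Lemma \ref{lem_homotopy_invariance_of_B} collapse the mixed terms such as $B(\mathcal{W}EG\times\mathcal{W}\mathcal{E}_0,\mathcal{W}\mathcal{G}_0,\ast)$ and $B(\mathcal{W}\mathcal{E}_0,\mathcal{W}\mathcal{G}_0,\ast)$ onto $B(\ast,\mathcal{W}\mathcal{G}_0,\ast)$; and from the identifications $M_0\simeq B\mathcal{G}_0\simeq B\mathcal{W}\mathcal{G}_0$ and $M\simeq B\mathcal{G}\simeq B\mathcal{W}\mathcal{G}$ furnished by the principal fibrations $\mathcal{G}_0\to\mathcal{E}_0\to M_0$ and $\mathcal{G}\to\mathcal{E}\to M$ with weakly contractible total spaces via Lemma \ref{lem_B_quasifibration}. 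The classical equivalence $EG\times_G M_0\simeq M$ then arises because both are modelled by $B\mathcal{W}\mathcal{G}$.

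Finally I would deal with the two boundary columns, where the genuine checking lies. For the right column the three bases are $BG$, $BG$ and $B\mathcal{W}G$; composing the base maps of the middle row with the weak equivalence $B\mathcal{W}G\to BG$ induced by $\mathcal{W}G\to G$ places all three rows over the common base $BG$ with the base maps equal to the identity, which yields the equalities in the right column. The hard part will be the left column: I must verify that the two weak equivalences $X\to M_0$, one factoring through the Borel side $EG\times M_0\simeq M_0$ and one through the evaluation side, are homotopic as maps into $M_0$, so that the left-hand vertical composite is homotopic to $\operatorname{id}_{M_0}$, as the curved arrow demands. I expect this compatibility to be the main obstacle, since it is the single point at which the two separately built halves of the diagram must be matched; I would establish it by tracing both identifications back to the common fibre $B(\ast,\mathcal{W}\mathcal{G}_0,\ast)$ and invoking the contracting homotopies of Lemma \ref{lem_B(X,G,G)} to see that the intervening maps are all induced by the projection and the fibre inclusion and hence compose to the identity.
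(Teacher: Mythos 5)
Your proposal follows the paper's own argument: the paper deduces the lemma directly from the two displayed diagrams preceding it, taking $X=B(\ast,\mathcal{W}\mathcal{G}_0,\ast)$ and $X'=B(\ast,\mathcal{W}\mathcal{G},\ast)$ and inverting the weak equivalences between CW models exactly as you describe. The left-column compatibility you single out as the main obstacle is what the curved arrows in the second diagram encode (the projections $EG\times M_0\to M_0$ and $B(\mathcal{W}EG\times\mathcal{W}\mathcal{E}_0,\mathcal{W}\mathcal{G}_0,\ast)\to B(\mathcal{W}\mathcal{E}_0,\mathcal{W}\mathcal{G}_0,\ast)$ make the relevant triangles commute strictly), so tracing both identifications back to the common middle term, as you propose, is exactly the intended verification.
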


\begin{rem}
Intuitively, this result states that the top and bottom rows are equivalent as a fiber sequence.
But we cannot expect the existence of a direct weak equivalence between $EG\times_G\Map_0(B,BG;\epsilon)$ and $\Map(B,BG;\epsilon)$ in general.
\end{rem}

The conjugation defines a homomorphism
\begin{align*}
G\rightarrow\mathcal{A}_n(G,G),\quad g\mapsto\alpha_g
\end{align*}
and a left $G$-action
\begin{align*}
G\times\mathcal{A}_n(G,G)\rightarrow\mathcal{A}_n(G,G),\quad (g,f)\mapsto\alpha_g\circ f.
\end{align*}
On the other hand, we have a homomorphism
\begin{align*}
G\rightarrow\Map_0(BG,BG),\quad g\mapsto B\alpha_g
\end{align*}
and a left $G$-action
\begin{align*}
G\times\Map_0(B_nG,BG)\rightarrow\Map_0(B_nG,BG),\quad (g,F)\mapsto B\alpha_g\circ F.
\end{align*}

\begin{lem}
\label{lem_action_on_A_n}
For a well-pointed topological group $G$ of the homotopy type of a CW complex, the map
\begin{align*}
(\iota_n)_{\#}\circ B_n&:\mathcal{A}_n(G,G)\rightarrow\Map_0(B_nG,BG),
\end{align*}
is a $G$-equivariant weak equivalence with respect to the above left $G$-action.
\end{lem}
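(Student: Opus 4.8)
The plan is to verify the two things the statement claims: that the map $(\iota_n)_{\#}\circ B_n$ is a weak equivalence, and that it is $G$-equivariant. The first is already essentially established. By Theorem~\ref{mainthm}, the composite
\[
\mathcal{A}_n(G,G)\xrightarrow{B_n}\Map_0(B_nG,B_nG)\xrightarrow{(\iota_n)_{\#}}\Map_0(B_nG,BG)
\]
is a weak equivalence, since $G$ is well-pointed of the homotopy type of a CW complex and, being a topological group, is in particular grouplike. So the entire content that remains is the equivariance, and the real work is to check that the two $G$-actions --- the conjugation action $(g,f)\mapsto\alpha_g\circ f$ on $\mathcal{A}_n(G,G)$ and the postcomposition action $(g,F)\mapsto B\alpha_g\circ F$ on $\Map_0(B_nG,BG)$ --- are intertwined by the map.

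\textbf{Reducing equivariance to a functoriality statement.} The key observation is that both group actions are instances of the \emph{categorical composition} that we have already built. The conjugation $\alpha_g\colon G\to G$ is a homomorphism, hence a genuine morphism in $\mathcal{A}_n$ (with $A_n$-form of size $0$), and the action $f\mapsto\alpha_g\circ f$ is just composition in the topological category $\mathcal{A}_n$ with this morphism. On the target side, $B\alpha_g\colon BG\to BG$ is the map $B_\infty$ applied to the homomorphism $\alpha_g$, and postcomposition $F\mapsto B\alpha_g\circ F$ is ordinary composition of pointed maps. What I would therefore prove is the single naturality identity
\begin{align*}
(\iota_n)_{\#}\circ B_n(\alpha_g\circ f)=B\alpha_g\circ\bigl((\iota_n)_{\#}\circ B_n(f)\bigr)
\end{align*}
for every $f\in\mathcal{A}_n(G,G)$ and every $g\in G$. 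This splits into two pieces. First, since $B_n\colon\mathcal{A}_n^{\mathrm{R}}\times_{\mathcal{A}_n}\mathcal{A}_n^{\mathrm{L}}\to\mathbf{CG}$ (and its restriction to $\mathcal{A}_n$) is a functor, $B_n(\alpha_g\circ f)=B_n(\alpha_g)\circ B_n(f)$ up to the reparametrization coming from the $\gamma$-maps; here $B_n(\alpha_g)=B_n\alpha_g$ is precisely the restriction of $B\alpha_g$ to the $n$-th projective space, because $\alpha_g$ is a strict homomorphism and so $B_n\alpha_g$ is just $B\alpha_g$ followed by, and compatible with, the inclusions $\iota_n^{n'}$. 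Second, the inclusion $\iota_n\colon B_nG\hookrightarrow BG$ is natural with respect to maps induced by homomorphisms, so $\iota_n\circ B_n\alpha_g=B\alpha_g\circ\iota_n$. Composing these two pieces gives exactly the required identity.

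\textbf{The main obstacle.} The one point that needs genuine care is the compatibility of the functor $B_n$ with composition in the \emph{strict}, on-the-nose sense required for equivariance, rather than merely up to homotopy. The definition of composition in $\mathcal{A}_n$ involves the cubical subdivision maps $\gamma_{\mathbf{i}}^\ell$ and an additive bookkeeping of the sizes $\ell,\ell'$, and the functor $B_n(\phi,f,\psi)$ is likewise defined via these $\gamma$-maps. Because $\alpha_g$ is a homomorphism of size $0$, composing with it contributes $\ell'=0$, so by identity~(iv) of Lemma~\ref{lem_identities_gamma} the reparametrizations collapse and $B_n(\alpha_g\circ f)$ agrees strictly with $B_n\alpha_g\circ B_nf$; this is where I expect to have to write out the $\gamma$-identity carefully. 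Once that is verified the equivariance is immediate, and combined with the weak equivalence from Theorem~\ref{mainthm} the lemma follows. Continuity and basepoint-preservation of both actions are routine from the continuity of composition in $\mathcal{A}_n$ and the continuity of postcomposition on mapping spaces, so I would state these briefly without a separate argument.
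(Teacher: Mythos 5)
Your proposal is correct and takes the same route as the paper: the paper's entire proof is that the lemma ``immediately follows from Theorem \ref{mainthm}'', with the $G$-equivariance treated as an immediate consequence of the functoriality of $B_n$ and the fact that $\alpha_g$ is a strict homomorphism, exactly the two points you verify. Your expansion of the equivariance check (via the size-$0$ collapse of the $\gamma$-reparametrization and the naturality $\iota_n\circ B_n\alpha_g=B\alpha_g\circ\iota_n$) just makes explicit what the paper leaves implicit.
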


\begin{proof}
This immediately follows from Theorem \ref{mainthm}.
\end{proof}

Let us denote the subspace of weak $A_n$-equivalences by $\mathcal{A}_n(G,G;\mathrm{eq})\subset\mathcal{A}_n(G,G)$, whose basepoint is the identity $\mathrm{id}_G\in\mathcal{A}_n$.
A subspace $\overline{\Map}_0(B_nG,BG)\subset\Map_0(B_nG,BG)$ is defined as follows: for a map $F\in\Map_0(B_nG,BG)$, $F$ is contained in $\overline{\Map}_0(B_nG,BG)$ if and only if the adjoint $A_n$-map $G\to G$ is a weak $A_n$-equivalence.
We also denote the union of path-components in $\Map(B_nG,BG)$ that intersect nontrivially with $\overline{\Map}_0(B_nG,BG)$ by $\overline{\Map}(B_nG,BG)$.
The basepoint of $\overline{\Map}_0(B_nG,BG)$ and $\overline{\Map}(B_nG,BG)$ is the inclusion $\iota_n\colon B_nG\rightarrow BG$.

\begin{thm}
\label{thm_extension_evaluation}
Let $G$ be a well-pointed topological group having the homotopy type of a CW complex.
Then there is a homotopy fiber sequence
\begin{align*}
\xymatrix{
\overline{\Map}_0(B_nG,BG)\to\overline{\Map}(B_nG,BG)\to BG\to B\mathcal{W}\mathcal{A}_n(G,G;\mathrm{eq})
}
\end{align*}
such that the map $BG\rightarrow B\mathcal{W}\mathcal{A}_n(G,G;\mathrm{eq})$ is induced from the homomorphism $G\rightarrow\mathcal{A}_n(G,G;\mathrm{eq})$ giving the conjugation.
\end{thm}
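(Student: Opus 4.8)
The plan is to build the desired homotopy fiber sequence by recognizing it as the bottom portion of a Borel-type fibration associated to the conjugation action of $G$ on the space of self-$A_n$-equivalences, and to match this with the geometric evaluation fiber sequence coming from gauge theory. First I would invoke Lemma \ref{lem_action_on_A_n}, which gives a $G$-equivariant weak equivalence $(\iota_n)_{\#}\circ B_n\colon\mathcal{A}_n(G,G)\to\Map_0(B_nG,BG)$ intertwining the conjugation action on the source with the postcomposition action $F\mapsto B\alpha_g\circ F$ on the target. Restricting to the relevant path-components, this identifies $\overline{\Map}_0(B_nG,BG)$ with the subspace $\mathcal{A}_n(G,G;\mathrm{eq})$ of weak $A_n$-equivalences, $G$-equivariantly. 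The homomorphism $G\to\mathcal{A}_n(G,G;\mathrm{eq})$, $g\mapsto\alpha_g$, then supplies the map $BG\to B\mathcal{W}\mathcal{A}_n(G,G;\mathrm{eq})$ in the statement, after replacing $\mathcal{A}_n(G,G;\mathrm{eq})$ by its CW-model $\mathcal{W}\mathcal{A}_n(G,G;\mathrm{eq})$ so that the bar construction is well-behaved.

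Next I would form the Borel construction for this action. Applying $B(*,-,-)$ to the $\mathcal{W}$-replacements yields a fiber sequence
\begin{align*}
\mathcal{W}\mathcal{A}_n(G,G;\mathrm{eq})\to B(*,\mathcal{W}G,\mathcal{W}\mathcal{A}_n(G,G;\mathrm{eq}))\to BG\to B\mathcal{W}\mathcal{A}_n(G,G;\mathrm{eq}),
\end{align*}
where the last map is delooped from the homomorphism $G\to\mathcal{A}_n(G,G;\mathrm{eq})$, exactly as in Proposition \ref{prp_homomorphism_fiber_seq} (using Lemma \ref{lem_B_quasifibration} for the quasifibration property). The fiber is $\overline{\Map}_0(B_nG,BG)$ up to weak equivalence by the previous paragraph, so it remains to identify the total space $B(*,\mathcal{W}G,\mathcal{W}\mathcal{A}_n(G,G;\mathrm{eq}))$, the homotopy quotient $EG\times_G\overline{\Map}_0(B_nG,BG)$, with $\overline{\Map}(B_nG,BG)$.

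For this identification I would use Lemma \ref{lem_evaluation_Borel} as the template: that lemma produces a homotopy-commutative ladder of weak equivalences showing that the Borel fibration $\Map_0(B,BG;\epsilon)\to EG\times_G\Map_0(B,BG;\epsilon)\to BG$ agrees with the evaluation fibration $\Map_0(B,BG;\epsilon)\to\Map(B,BG;\epsilon)\to BG$. Specializing the base $B$ to the projective space $B_nG$ and the classifying map $\epsilon$ to $\iota_n$, and restricting to the components making up $\overline{\Map}$, the Borel total space is weakly equivalent to $\overline{\Map}(B_nG,BG)$ as a space over $BG$. Splicing this identification onto the Borel fiber sequence of the previous paragraph yields the claimed four-term homotopy fiber sequence, with the connecting map $BG\to B\mathcal{W}\mathcal{A}_n(G,G;\mathrm{eq})$ induced by conjugation as required.

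The main obstacle I anticipate is the bookkeeping of path-components and basepoints: one must check that the conjugation action genuinely preserves $\overline{\Map}_0$ (equivalently that $\alpha_g$ is a weak $A_n$-equivalence, which is clear, and that postcomposition by $B\alpha_g$ preserves the relevant components), and that the components selected by ``intersecting $\overline{\Map}_0$ nontrivially'' in the definition of $\overline{\Map}(B_nG,BG)$ correspond precisely to the components of the Borel total space lying over the image of $\pi_0$. Equivalently, one needs $\pi_0$ of the evaluation map $\overline{\Map}(B_nG,BG)\to BG$ to be surjective and the fiber inclusion to hit exactly $\overline{\Map}_0$, so that no spurious components appear on either side. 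Verifying this compatibility — together with the homotopy coherence of the several $\mathcal{W}$-replacements in Lemma \ref{lem_evaluation_Borel} when transported through the equivalence of Lemma \ref{lem_action_on_A_n} — is the delicate point; the rest is a formal assembly of already-established fiber sequences.
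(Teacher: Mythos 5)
Your proposal is correct and follows essentially the same route as the paper: the paper likewise combines Proposition \ref{prp_homomorphism_fiber_seq} (the Borel-type fiber sequence for the conjugation action on $\mathcal{A}_n(G,G;\mathrm{eq})$), Lemma \ref{lem_action_on_A_n} (the $G$-equivariant identification with $\overline{\Map}_0(B_nG,BG)$), and Lemma \ref{lem_evaluation_Borel} (identifying the Borel total space with $\overline{\Map}(B_nG,BG)$). The component/basepoint bookkeeping you flag as delicate is left implicit in the paper's (very terse) proof as well.
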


\begin{proof}
By Proposition \ref{prp_homomorphism_fiber_seq}, the sequence
\[
	\mathcal{A}_n(G,G;\mathrm{eq})\to B(*,G,\mathcal{A}_n(G,G;\mathrm{eq}))\to BG\to B\mathcal{W}\mathcal{A}_n(G,G;\mathrm{eq})
\]
with respect to the conjugation action of $G$ on $\mathcal{A}_n(G,G;\mathrm{eq})$ is a homotopy fiber sequence.
Then, combining Lemma \ref{lem_evaluation_Borel} and \ref{lem_action_on_A_n}, we obtain the desired homotopy fiber sequence.
\end{proof}

\begin{rem}
As remarked in Section \ref{section_Introduction}, if $n=\infty$, the extension in Theorem \ref{thm_extension_evaluation} coincides with the well-known fiber sequence
\begin{align*}
G\rightarrow\overline{\Map}_0(BG,BG)\rightarrow\overline{\Map}(BG,BG)\rightarrow BG\rightarrow B\mathcal{W}\overline{\Map}_0(BG,BG)\rightarrow B\mathcal{W}\overline{\Map}(BG,BG),
\end{align*}
where the monoid structures on $\overline{\Map}_0(BG,BG)$ and $\overline{\Map}(BG,BG)$ are given by compositions.
\end{rem}

The next example shows that the extension in Theorem \ref{thm_extension_evaluation} is the maximum.

\begin{ex}
Kishimoto--Kono--Theriault \cite[Theorem 1.3]{KKT13} showed that $\Omega\Map(S^4,B\SU(2)_{(5)};\iota_1)$ is not homotopy commutative.
This implies that $\Map(S^4,B\SU(2);\iota_1)$ is never delooped.
\end{ex}

But this is not always the case.

\begin{ex}
Let $T$ be the $m$-dimensional compact torus.
Then the maps
\begin{align*}
	&\overline{\Map}_0(BT,BT)\to\overline{\Map}_0(B_nT,BT)\\
	&\overline{\Map}(BT,BT)\to\overline{\Map}(B_nT,BT)
\end{align*}
are weak equivalences for $n\ge 1$.
Moreover, the evaluation fiber sequence
\[
	\overline{\Map}_0(BT,BT)\to\overline{\Map}(BT,BT)\to BT\to B\mathcal{W}\overline{\Map}_0(BT,BT)\to B\mathcal{W}\overline{\Map}(BT,BT)
\]
is equivalent to the sequence
\[
	\GL(m,\mathbb{Z})\to BT\rtimes\GL(m,\mathbb{Z})\to BT\to B\GL(m,\mathbb{Z})\to B(BT\rtimes\GL(m,\mathbb{Z})).
\]
This no longer extends.
This can be seen by observing the action of $\pi_1(B(BT\rtimes\GL(m,\mathbb{Z})))$ on $\pi_3(B(BT\rtimes\GL(m,\mathbb{Z})))$.
\end{ex}

As a step to observe the non-extendability, we conjecture as follows.

\begin{conjecture}
For a non-commutative compact connected Lie group $G$, $\Map(B_nG,BG;\iota_n)$ is never delooped for $1\le n<\infty$.
\end{conjecture}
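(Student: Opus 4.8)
The plan is to run the same engine that powers the $\SU(2)$ example above: if $\Map(B_nG,BG;\iota_n)$ were a loop space $\Omega Z$, then $\Omega\Map(B_nG,BG;\iota_n)\simeq\Omega^2Z$ would be a double loop space, hence homotopy commutative by the Eckmann--Hilton argument. So it suffices to produce, for every non-commutative compact connected Lie group $G$ and every finite $n$, a witness to the failure of homotopy commutativity of $\Omega\Map(B_nG,BG;\iota_n)$, namely a non-vanishing Samelson product. The case $G=\SU(2)$, $n=1$ (where $B_1\SU(2)=S^4$) is precisely \cite[Theorem 1.3]{KKT13}, so the task is to find a uniform replacement for that localized computation.

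The first step is to make $\Omega\Map(B_nG,BG;\iota_n)$ computable via Theorem \ref{thm_extension_evaluation}. Since $G$ is connected, the conjugation homomorphism $G\to\mathcal{A}_n(G,G;\mathrm{eq})$ lands in the identity component $\mathcal{A}_n(G,G;\mathrm{id})$, so the classifying map $BG\to B\mathcal{W}\mathcal{A}_n(G,G;\mathrm{eq})$ factors through $B\mathcal{W}\mathcal{A}_n(G,G;\mathrm{id})$. Restricting the homotopy fiber sequence of Theorem \ref{thm_extension_evaluation} to this identity component identifies the single path-component as a homotopy orbit space,
\begin{align*}
\Map(B_nG,BG;\iota_n)\simeq B(*,G,\mathcal{A}_n(G,G;\mathrm{id})).
\end{align*}
Applying Proposition \ref{prp_homomorphism_fiber_seq} to the (cofibrantly replaced) conjugation homomorphism and looping then yields a weak equivalence of grouplike monoids
\begin{align*}
\Omega\Map(B_nG,BG;\iota_n)\simeq\mathcal{F}_n:=\operatorname{hofib}\bigl(G\xrightarrow{\ \mathrm{conj}\ }\mathcal{A}_n(G,G;\mathrm{id})\bigr).
\end{align*}
Thus the conjecture reduces to showing that $\mathcal{F}_n$ is not homotopy commutative whenever $G$ is non-abelian.

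To detect non-commutativity of $\mathcal{F}_n$ I would exploit the non-abelian-ness of $G$ through naturality of Samelson products. Every non-abelian compact connected $G$ contains a rank-one subgroup $\SU(2)$ (or $\SO(3)$) on which the Samelson square $\langle\iota,\iota\rangle$ is nonzero, and the projection $\mathcal{F}_n\to G$ is an H-map whose failure to split is controlled by the conjugation action. The strategy is to embed $\SU(2)\subset G$, compare the fibers $\mathcal{F}_n$ for $G$ and for $\SU(2)$, and propagate the known non-commutativity; since $\mathcal{F}_n$ is a homotopy fiber rather than a retract of $G$, this comparison must be performed after a suitable localization where the relevant low-dimensional homotopy groups are detected, in the spirit of \cite{KKT13} and using Corollary \ref{cor_inverse_of_A_n-equivalence} and Theorem \ref{mainthm} to keep the identification of $\mathcal{F}_n$ functorial in $G$.

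The hard part is precisely this last step. Unlike the whole union $\overline{\Map}(B_nG,BG)$, which is deloopable because $\mathcal{A}_n(G,G;\mathrm{eq})$ is a genuine topological monoid under composition, a single path-component carries no composition product, and the non-commutativity of $G$ must be shown to survive passage to the homotopy fiber $\mathcal{F}_n$ for all finite $n$ simultaneously, where it could in principle be annihilated. I expect that one cannot avoid explicit localized homotopy computations to pin down a surviving Samelson product; producing a uniform obstruction valid for all non-commutative $G$ and all finite $n$, rather than a case-by-case generalization of the rank-one computation of Kishimoto--Kono--Theriault, is the main obstacle and is the reason the statement remains conjectural.
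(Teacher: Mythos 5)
The statement you were given is stated in the paper as a \emph{conjecture}: the paper offers no proof, and the author explicitly remarks that, beyond the Crabb--Sutherland--Zhang nilpotency bound, no result implying the homotopy non-commutativity of $\mathcal{G}(E_nG)\simeq\Omega\Map(B_nG,BG;\iota_n)$ is known. So there is nothing in the paper to compare your argument against. That said, your preliminary reductions are correct and match the paper's own framing of the problem. The Eckmann--Hilton observation that a delooping of $\Map(B_nG,BG;\iota_n)$ would force $\Omega\Map(B_nG,BG;\iota_n)$ to be homotopy commutative is exactly how the paper uses \cite{KKT13} in its $\SU(2)$ example. Your identification of the component of $\iota_n$ with $B(*,G,\mathcal{A}_n(G,G;\mathrm{id}))$, and hence of its loop space with the homotopy fiber of the conjugation homomorphism into the identity component of $\mathcal{A}_n(G,G;\mathrm{eq})$, does follow from Theorem \ref{thm_extension_evaluation} and Proposition \ref{prp_homomorphism_fiber_seq}, using that $BG$ is simply connected for connected $G$ and that conjugation lands in the identity component.

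However, what you have written is a strategy, not a proof, and you say so yourself. The genuine gap is the entire substance of the conjecture: exhibiting, for every non-abelian compact connected $G$ and every finite $n$, a surviving obstruction (e.g.\ a nonzero Samelson product) in $\operatorname{hofib}\bigl(G\to\mathcal{A}_n(G,G;\mathrm{id})\bigr)$. Your plan to restrict to a rank-one subgroup $\SU(2)$ or $\SO(3)$ of $G$ does not obviously go through, because the target $\mathcal{A}_n(G,G;\mathrm{id})$ of the conjugation map depends on all of $G$ and the homotopy fiber is not functorial in the subgroup in any way that lets you pull back the rank-one computation; moreover, the only instance recorded in the paper is $G=\SU(2)$, $n=1$, localized at $5$, so even the rank-one input you propose to propagate is unavailable for $n\ge 2$, where $\mathcal{A}_n(G,G;\mathrm{id})$ carries genuine higher coherence data beyond $\Map_0(G,G)$. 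Your closing assessment is accurate: the missing step is precisely why the statement remains a conjecture in the paper as well.
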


\begin{rem}
Related to this conjecture, an upper bound of the homotopy nilpotency of $\mathcal{G}(E_nG)\simeq\Omega\Map(B_nG,BG;\iota_n)$ was given by Crabb--Sutherland--Zhang \cite{CSZ99} for general Lie groups $G$.
But the author does not know any result implying the homotopy non-commutativity of $\mathcal{G}(E_nG)$.
\end{rem}

\section{Application: Higher homotopy commutativity}
\label{section_appl_comm}
There are several notions of higher homotopy commutativity.
Sugawara introduced the strong homotopy commutativity in \cite{Sug60}, which is naturally generalized to Sugawara $C^n$-spaces \cite{McG89}.
Williams introduced another higher homotopy commutativity called Williams $C_n$-spaces in \cite{Wil69}.
Hemmi and Kawamoto defined $C_k(n)$-spaces in \cite{HK11}.
Kishimoto and Kono also considered certain higher commutativity called $C(k,\ell)$-spaces in \cite{KK10}.
First we compare these commutativities.
Recall that they are described by using projective spaces as follows.

\begin{prp}
\label{prp_higher_comutativities}
Let $G$ be a well-pointed grouplike topological monoid having the homotopy type of a CW complex.
Then the following statements hold:
\begin{enumerate}[(i)]
\item
$G$ is a Williams $C_n$-space if and only if the map $(\iota_1,\ldots,\iota_1)\colon(\Sigma G)^{\vee n}\rightarrow BG$ extends over the product $(\Sigma G)^{\times n}$,

\item
$G$ is a $C(k,\ell)$-space if and only if the map $(\iota_k,\iota_\ell)\colon B_kG\vee B_\ell G\rightarrow BG$ extends over the product $B_kG\times B_\ell G$,

\item
$G$ is a $C_k(n)$-space if and only if the map $(\iota_k,\iota_n)\colon B_kG\vee B_nG\rightarrow BG$ extends over the union $\bigcup_{i+j=n,i\le k}B_iG\times B_jG$,

\item
$G$ is a Sugawara $C^n$-space if and only if $G$ is a $C_n(n)$-space.
\end{enumerate}
\end{prp}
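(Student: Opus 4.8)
The plan is to read all four equivalences as translations between an \emph{intrinsic} higher-commutativity structure on $G$ and an \emph{extension problem} for the inclusions $\iota_k\colon B_kG\to BG$, using throughout the identification $B_1G\cong\Sigma G$ together with the fact that $\iota_1\colon\Sigma G\to BG$ is adjoint to the canonical weak equivalence $G\to\Omega BG$ of Lemma \ref{lem_G_to_OmegaBG}. The common mechanism is that a map $B_{i_1}G\times\cdots\times B_{i_r}G\to BG$ restricting to $\iota_{i_j}$ on each axis is precisely the datum of a coherent partial "multiplication up to the prescribed levels" on $BG$, and composing with $G\simeq\Omega BG$ converts such data into homotopies between iterated products on $G$. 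For (ii) and (iii) there is essentially nothing to prove beyond bookkeeping: Kishimoto--Kono define a $C(k,\ell)$-space in \cite{KK10} and Hemmi--Kawamoto define a $C_k(n)$-space in \cite{HK11} by exactly the stated extension conditions, so the first step is merely to match their notation for the projective spaces and for the skeletal filtration of the product with ours.

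For (i) I would begin from Williams' original definition in \cite{Wil69}, which packages $C_n$-commutativity as a symmetric coherent family of maps on the products $G^{\times m}$ for $m\le n$, and run it through the adjunction above. The bottom cell of each factor of $(\Sigma G)^{\times n}$ records one copy of $G$, the fold map $(\Sigma G)^{\vee n}\to BG$ is $\iota_1$ on each summand, and an extension over the full product supplies the higher homotopies that interchange the $n$ factors in all orders. Since $(\Sigma G)^{\times n}$ is built from $(\Sigma G)^{\vee n}$ by successively attaching strata made from the smash products $G^{\wedge S}$ for subsets $S\subseteq\{1,\dots,n\}$, extending the fold map stratum by stratum amounts to solving Williams' coherence conditions one interchange at a time; this is where I would spend the care, checking that the extension datum at each stage corresponds to the correct Williams homotopy.

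The genuinely substantive equivalence is (iv), which I expect to be the main obstacle. First observe that when $k=n$ the constraint $i\le k$ in $\bigcup_{i+j=n,\,i\le k}B_iG\times B_jG$ is vacuous, so the domain is the full "$n$-th product-filtration stage" $\bigcup_{i+j=n}B_iG\times B_jG\subset B_nG\times B_nG$. The task is to identify an extension of $(\iota_n,\iota_n)$ over this stage with a Sugawara strong homotopy commutativity structure up to order $n$ (in the sense of \cite{Sug60}, reformulated in \cite{McG89}). I would argue, level by level in $i+j$ and controlled by the cofiber sequences $\Sigma^m G^{\wedge(m+1)}\to B_mG\to B_{m+1}G$ recorded before Theorem \ref{mainthm}, that Sugawara's data assemble into a compatible system of maps $B_iG\times B_jG\to BG$ agreeing with $\iota$ on the axes, and conversely. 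Since this comparison is essentially the content of Hemmi--Kawamoto's definition of $C_k(n)$-spaces, I would either cite \cite{HK11} for the identification or, for a self-contained argument, induct on $n$ along these cofiber sequences. The hard part is verifying \emph{coherence}, not just existence, at each filtration level: Sugawara's conditions impose symmetry and higher associativity simultaneously, whereas the extension problem encodes them only implicitly through the product filtration, so reconciling the two indexing schemes is the crux.
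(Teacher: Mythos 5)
The paper does not prove this proposition at all: immediately after the statement it simply writes ``See \cite{Sau95} for the proof of (i), \cite{KK10} for (ii), and \cite{HK11} for (iii) and (iv).'' Your proposal therefore attempts strictly more than the paper does, and its overall architecture is consistent with those references: you correctly observe that (ii) and (iii) are essentially the definitions adopted in \cite{KK10} and \cite{HK11} (so only a notation-matching step is needed), and you correctly isolate (i) and (iv) as the parts with genuine content. What you should be aware of is that your sketches for those two parts are roadmaps to existing theorems rather than proofs: the equivalence in (i) between a Williams $C_n$-structure and an extension of $(\iota_1,\ldots,\iota_1)$ over $(\Sigma G)^{\times n}$ is precisely Sa\"umell's theorem \cite{Sau95}, and the combinatorial crux --- matching Williams' permutohedral parameter spaces $P_j\times G^{\times j}$ to the strata $G^{\wedge S}$ of the product filtration of $(\Sigma G)^{\times n}$ ``one interchange at a time'' --- is the entire content of that paper and is left unexecuted in your sketch. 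Likewise the equivalence in (iv) between Sugawara $C^n$-spaces and $C_n(n)$-spaces is a theorem of Hemmi--Kawamoto \cite{HK11}; your observation that the constraint $i\le k$ becomes vacuous for $k=n$ is correct and your proposed induction along the cofiber sequences $\Sigma^mG^{\wedge(m+1)}\to B_mG\to B_{m+1}G$ is a plausible skeleton, but reconciling Sugawara's simultaneous symmetry-and-associativity conditions with the filtration $\bigcup_{i+j=n}B_iG\times B_jG$ is again the substance of the cited result. In short: your proposal correctly reduces the proposition to the literature the paper itself cites, and if you intend a self-contained proof you must still supply the two combinatorial comparisons above; if you are content to cite, your treatment coincides with the paper's.
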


See \cite{Sau95} for the proof of (i), \cite{KK10} for (ii), and \cite{HK11} for (iii) and (iv).
Obviously, any Sugawara $C^n$-space is a $C_k(n)$-space for $k\le n$, and any $C_k(n)$-space is a $C(k,n-k)$-space.
The Williams $C_n$-space is related with other homotopy commutativities as the following lemma.

\begin{lem}
Let $G$ be a well-pointed grouplike topological monoid having the  homotopy type of a CW complex.
If $G$ is a $C(k,\ell)$-space, then $G$ is a $C_{k+\ell}$-space.
\end{lem}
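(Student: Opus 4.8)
The plan is to translate everything through Proposition \ref{prp_higher_comutativities} and then run an induction on $n=k+\ell$ built on the fat-wedge filtration of a product. By part (ii) the $C(k,\ell)$-hypothesis furnishes a map $\Phi\colon B_kG\times B_\ell G\to BG$ with $\Phi|_{B_kG\times\ast}=\iota_k$ and $\Phi|_{\ast\times B_\ell G}=\iota_\ell$, while by part (i) it suffices to extend $(\iota_1,\dots,\iota_1)\colon(\Sigma G)^{\vee(k+\ell)}\to BG$ over $(\Sigma G)^{\times(k+\ell)}$. Throughout I would use the homeomorphism $B_1G\cong\Sigma G$ and the factorizations $\iota_1=\iota_k\circ\iota_1^k=\iota_\ell\circ\iota_1^\ell$, where $\iota_1^k\colon B_1G\to B_kG$ denotes the inclusion.

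For the induction, the base case $n=2$ is the tautology that $C(1,1)$ and Williams $C_2$ are the same condition. For $n\ge 3$ I may assume $k\ge 2$ (the case $k=1<\ell$ is symmetric); precomposing $\Phi$ with $\iota_{k-1}^k\times\mathrm{id}$ produces an extension witnessing $C(k-1,\ell)$, so the inductive hypothesis makes $G$ a Williams $C_{n-1}$-space, hence a Williams $C_j$-space for every $j\le n-1$. Consequently $(\iota_1,\dots,\iota_1)$ extends over the fat wedge $T=\{(x_1,\dots,x_n)\mid x_i=\ast\text{ for some }i\}\subset(\Sigma G)^{\times n}$: one builds the extension up the fat-wedge filtration, the obstruction on each $j$-fold smash cell with $j<n$ being a $j$-fold Whitehead product of copies of $\iota_1$, which vanishes by Williams $C_j$ after restricting to the corresponding $j$-fold sub-product.

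Since $(\Sigma G)^{\times n}/T\cong\Sigma^nG^{\wedge n}$, extending the resulting map $w'\colon T\to BG$ over the whole product is equivalent, via the cofiber sequence $\Sigma^{n-1}G^{\wedge n}\to T\to(\Sigma G)^{\times n}\to\Sigma^nG^{\wedge n}$, to the vanishing of a single obstruction $o=w'\circ a\in[\Sigma^{n-1}G^{\wedge n},BG]$, where $a$ is the attaching map of the top cell (the $n$-fold Whitehead product of the fundamental classes). To kill $o$ I would compare it with the obstruction coming from $\Phi$. The existence of $\Phi$ forces the generalized Whitehead product $[\iota_k,\iota_\ell]$ of the inclusions to vanish, and the top cell of $B_kG\wedge B_\ell G$ is $\Sigma^kG^{\wedge k}\wedge\Sigma^\ell G^{\wedge\ell}=\Sigma^nG^{\wedge n}$, the very cell carrying $o$. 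Using $\iota_1=\iota_k\circ\iota_1^k=\iota_\ell\circ\iota_1^\ell$ together with the naturality of Whitehead products, I expect to identify $o$ with the top-cell component of $[\iota_k,\iota_\ell]$; since the latter vanishes, so does $o$, completing the extension and the induction.

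The main obstacle is precisely this last identification. Relating the $n$-fold Whitehead product of $\iota_1$ appearing in $o$ to the single mixed product $[\iota_k,\iota_\ell]$ requires unwinding the attaching maps internal to $B_kG$ and $B_\ell G$, so the explicit combinatorics of the bar construction and the pinch maps of Lemma \ref{lem_pinch_map} should enter here; one must also verify that the fat-wedge extension of the previous step can be chosen so that $o$ is genuinely the only surviving obstruction, and that these Whitehead-product obstructions are complete (which is legitimate because every cell involved is a suspension).
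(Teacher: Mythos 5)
Your reduction via Proposition \ref{prp_higher_comutativities}(i) is the same first step as the paper's, but from there the two arguments diverge completely, and yours has real gaps. The paper does not touch obstruction theory at all: it invokes the fact (coming from the $C(k,\ell)$ literature) that a $C(k,\ell)$-space has the wedge-to-product extension property for \emph{any} pair of connected CW complexes $A$, $B$ with $\cat A\le k$ and $\cat B\le\ell$, and then simply observes that $(\Sigma G)^{\times k}$ and $(\Sigma G)^{\times \ell}$ have L--S category at most $k$ and $\ell$ respectively, so the map $j$ extends over $(\Sigma G)^{\times k}\times(\Sigma G)^{\times\ell}=(\Sigma G)^{\times(k+\ell)}$. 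The whole point of the category argument is that it packages exactly the bookkeeping you are trying to do by hand: a space of category $\le k$ maps into $BG$ only through $B_kG$, so the single extension $\Phi\colon B_kG\times B_\ell G\to BG$ already dominates every product of a category-$k$ space with a category-$\ell$ space.

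The concrete gaps in your route are the following. First, to build $w'$ on the fat wedge $T$ you glue the Williams $C_j$ extensions over the various sub-products, but Proposition \ref{prp_higher_comutativities}(i) only gives you one extension per sub-product with no compatibility on overlaps; making these agree on $T$ is precisely the coherence encoded in Williams' permutohedral forms, and you cannot get it from the extension-over-product characterization alone. Second, and more seriously, the step you yourself flag as the main obstacle is the whole content of the argument: identifying the top obstruction $o=w'\circ a$, an $n$-fold iterated Whitehead product of copies of $\iota_1$ (plus correction terms depending on the choice of $w'$), with ``the top-cell component of $[\iota_k,\iota_\ell]$'' is not a formal naturality statement. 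The attaching map of the top cell of $B_kG\times B_\ell G$ involves the internal attaching maps of $B_kG$ and $B_\ell G$, which are \emph{not} Whitehead products of $\iota_1$'s in any simple sense, and the comparison would require exactly the kind of decomposition of $a$ under $\iota_1^k\times\cdots$ that you defer. Third, even granting that identification, $o$ depends on the choice of $w'$, so you would still need to show that the vanishing of $[\iota_k,\iota_\ell]$ kills $o$ for \emph{some} admissible choice, not just that the two obstructions live on the same cell. As written, the proposal is a plausible program rather than a proof; the paper's category argument is both shorter and avoids all three issues.
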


\begin{proof}
It is sufficient to prove that the map $j\colon(\iota_1,\ldots,\iota_1)\colon(\Sigma G)^{\vee (k+\ell)}\rightarrow BG$ can be extended over the product $(\Sigma G)^{\times(k+\ell)}$ by Proposition \ref{prp_higher_comutativities} (i).
Since $G$ is a $C(k,\ell)$-space, for connected CW complexes $A$ and $B$ such that $\cat A\le k$ and $\cat B\le\ell$, any map $A\vee B\to BG$ extends over the product $A\times B$.
Then the map $j$ can be extended since the $n$-fold product of the suspension spaces have the L--S category less than or equal to $n$.
\end{proof}

Now we let $G$ be a well-pointed topological group having the homotopy type of a CW complex.
In Section \ref{section_evaluation}, we saw that the connecting map $\delta\colon G\rightarrow\Map_0(B_nG,BG)$ in the evaluation fiber sequence
\begin{align*}
G\xrightarrow{\delta}\overline{\Map}_0(B_nG,BG)\rightarrow\overline{\Map}(B_nG,BG;\iota_n)\rightarrow BG
\end{align*}
is $\delta(g)=B\alpha_g\circ\iota_n$ and is identified with the homomorphism $G\rightarrow\mathcal{A}_n(G,G)$ induced by the conjugation.

\begin{thm}
\label{thm_C(k,l)_connecting}
Let $G$ be a well-pointed topological group having the pointed homotopy type of a CW complex.
Then $G$ is a $C(k,\ell)$-space if and only if the homomorphism $G\rightarrow\mathcal{A}_\ell(G,G)$ giving the conjugation is homotopic to the trivial map as an $A_k$-map.
\end{thm}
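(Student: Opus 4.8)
The plan is to run the argument through the weak equivalence of Theorem \ref{mainthm} together with the extended evaluation fiber sequence of Theorem \ref{thm_extension_evaluation}, reducing the assertion to a lifting-versus-extension comparison. First I would fix the target: each conjugation $\alpha_g$ is an automorphism of $G$, hence a weak $A_\ell$-equivalence, so the conjugation factors as a homomorphism $c\colon G\to\mathcal{A}_\ell(G,G;\mathrm{eq})$, whose codomain is grouplike by Corollary \ref{cor_inverse_of_A_n-equivalence}. Viewing $c$ as an $A_k$-map (with the size-$0$ form of a homomorphism), the hypothesis that $c$ is $A_k$-homotopic to the trivial map means exactly that $c$ and the constant map at $\mathrm{id}_G$ lie in the same path-component of $\mathcal{A}_k(G,\mathcal{A}_\ell(G,G;\mathrm{eq}))$. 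Applying Theorem \ref{mainthm} to $G$ and $G'=\mathcal{A}_\ell(G,G;\mathrm{eq})$ (passing to $\mathcal{W}G'$ where CW hypotheses are needed), this $\pi_0$-statement is equivalent to the nullity of the adjoint map $B_kG\to B\mathcal{W}\mathcal{A}_\ell(G,G;\mathrm{eq})$. Since $c$ is a genuine homomorphism its adjoint is $\iota_k\circ B_kc=Bc\circ\iota_k$, where $Bc$ is precisely the map $BG\to B\mathcal{W}\mathcal{A}_\ell(G,G;\mathrm{eq})$ appearing in Theorem \ref{thm_extension_evaluation}; thus $A_k$-triviality of $c$ is equivalent to $Bc\circ\iota_k\simeq *$.

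Next I would feed this into Theorem \ref{thm_extension_evaluation}, in which $\overline{\Map}(B_\ell G,BG)$ is the homotopy fiber of $Bc\colon BG\to B\mathcal{W}\mathcal{A}_\ell(G,G;\mathrm{eq})$ and the fiber inclusion is the evaluation $\mathrm{ev}_0$ at the basepoint. By the universal property of the homotopy fiber, $Bc\circ\iota_k$ is null-homotopic if and only if $\iota_k\colon B_kG\to BG$ admits a lift $\lambda\colon B_kG\to\overline{\Map}(B_\ell G,BG)$ with $\mathrm{ev}_0\circ\lambda\simeq\iota_k$. As such a lift may be taken pointed and $B_kG$ is connected, $\lambda$ automatically lands in the path-component of the basepoint $\iota_\ell$, that is, in $\overline{\Map}(B_\ell G,BG;\iota_\ell)$.

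Finally I would identify this lift with a $C(k,\ell)$-structure through Proposition \ref{prp_higher_comutativities}(ii). Under the exponential law $\Map(B_kG,\Map(B_\ell G,BG))\cong\Map(B_kG\times B_\ell G,BG)$, the lift $\lambda$ corresponds to a map $\Lambda\colon B_kG\times B_\ell G\to BG$ with $\Lambda|_{B_kG\times *}=\mathrm{ev}_0\circ\lambda\simeq\iota_k$ and $\Lambda|_{*\times B_\ell G}=\lambda(*)\simeq\iota_\ell$, and conversely an extension of $(\iota_k,\iota_\ell)$ over the product adjoints to a lift landing in the $\iota_\ell$-component. The step that needs care is the bookkeeping of path-components together with the homotopy extension property: a lift only produces restrictions homotopic, not equal, to $\iota_k$ and $\iota_\ell$, so one must use the cofibration $B_kG\vee B_\ell G\hookrightarrow B_kG\times B_\ell G$ to deform $\Lambda$ into a genuine extension of $(\iota_k,\iota_\ell)$, and dually verify that the adjoint of an honest extension lands in $\overline{\Map}(B_\ell G,BG;\iota_\ell)$ rather than merely in $\Map(B_\ell G,BG)$. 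Composing the three equivalences yields the theorem.
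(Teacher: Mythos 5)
Your proposal is correct and follows essentially the same route as the paper: Theorem \ref{mainthm} converts the $A_k$-triviality of the conjugation homomorphism into the null-homotopy of $B_kG\xrightarrow{\iota_k}BG\to B\mathcal{W}\mathcal{A}_\ell(G,G;\mathrm{eq})$, the evaluation fiber sequence of Theorem \ref{thm_extension_evaluation} converts that into an extension of $(\iota_k,\iota_\ell)$ over $B_kG\times B_\ell G$, and Proposition \ref{prp_higher_comutativities}(ii) finishes. You merely make explicit the lifting/exponential-law and path-component bookkeeping that the paper leaves implicit in its second step.
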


\begin{proof}
By Theorem \ref{mainthm}, the homomorphism $G\rightarrow\mathcal{A}_\ell(G,G)$ is homotopic to the trivial map as $A_k$-map if and only if the composite
\begin{align*}
B_kG\xrightarrow{\iota_k}BG\rightarrow B\mathcal{W}\mathcal{A}_n(G,G;\mathrm{eq})
\end{align*}
is null-homotopic.
By the evaluation fiber sequence in Theorem \ref{thm_extension_evaluation}, this condition is equivalent to the existence of the wedge sum $(\iota_k,\iota_\ell):B_kG\vee B_\ell G\rightarrow BG$ over the product $B_kG\times B_\ell G$.
By Proposition \ref{prp_higher_comutativities}, it is equivalent to $G$ being a $C(k,\ell)$-space.
\end{proof}

\begin{cor}
Let $G$ be a well-pointed topological group having the pointed homotopy type of a CW complex.
Then the following conditions are equivalent:
\begin{enumerate}[(i)]
\item
the classifying space $BG$ is an $H$-space,
\item
$G$ is a Sugawara $C^\infty$-space,
\item
the map $G\rightarrow\mathcal{A}_n(G,G)$ induced by the conjugation is homotopic as an $A_\infty$-map to the constant map to the identity.
\end{enumerate}
\end{cor}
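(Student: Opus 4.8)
The plan is to show that each of the three conditions is equivalent to $BG$ admitting an $H$-space structure, which is condition (i) itself; it therefore suffices to prove (i) $\Leftrightarrow$ (iii) and (i) $\Leftrightarrow$ (ii). The observation underlying everything is that at the parameter value $\infty$ one has $B_\infty G=BG$ and $\iota_\infty=\mathrm{id}_{BG}$, so the wedge map $(\iota_\infty,\iota_\infty)\colon BG\vee BG\to BG$ of Proposition \ref{prp_higher_comutativities} is simply the fold map. An extension of the fold map over the product $BG\times BG$ is exactly a multiplication $\mu\colon BG\times BG\to BG$ restricting to the identity on each axis, that is, a homotopy-unital $H$-structure on $BG$. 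Hence the assertion that $BG$ is an $H$-space is literally the $C(\infty,\infty)$-condition appearing in Proposition \ref{prp_higher_comutativities}(ii).

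For (i) $\Leftrightarrow$ (iii), I would apply Theorem \ref{thm_C(k,l)_connecting} with $k=\ell=\infty$. That theorem identifies the $C(\infty,\infty)$-condition with the statement that the conjugation homomorphism $G\to\mathcal{A}_\infty(G,G)$ is $A_\infty$-homotopic to the constant map at $\mathrm{id}_G$, which is precisely (iii). Combined with the reduction of the previous paragraph, this yields (iii) $\Leftrightarrow C(\infty,\infty) \Leftrightarrow$ (i).

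For (i) $\Leftrightarrow$ (ii), I would first use Proposition \ref{prp_higher_comutativities}(iv) to replace ``Sugawara $C^\infty$-space'' by ``$C_\infty(\infty)$-space'', and then invoke part (iii) of the same proposition, which characterizes a $C_\infty(\infty)$-space by the extendability of the fold map $(\iota_\infty,\iota_\infty)$ over the union $\bigcup_{i+j=\infty}B_iG\times B_jG$. Since $BG=\colim_n B_nG$ and the product commutes with these filtered colimits of closed cofibrations, one has $\bigcup_{i,j}B_iG\times B_jG=BG\times BG$ as a space, so the domain of the extension is the full product and the condition once more reduces to $BG$ being an $H$-space.

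The main obstacle is the passage to the limit $\infty$: one must verify that Theorem \ref{thm_C(k,l)_connecting} and Proposition \ref{prp_higher_comutativities} remain valid at the value $\infty$, so that the $A_\infty$-homotopy statement in (iii) is the inverse limit of the $A_k$-statements as $k\to\infty$ and ``Sugawara $C^\infty$'' is the corresponding limit of the finite Sugawara $C^n$-conditions. One also needs the point-set fact that the filtered union $\bigcup_{i,j}B_iG\times B_jG$ carries the colimit topology agreeing with $BG\times BG$, which is where the hypothesis that $G$ has the homotopy type of a CW complex (making the inclusions $B_iG\subset B_{i+1}G$ cofibrations) enters. Once these limiting identifications are secured, the three equivalences are immediate.
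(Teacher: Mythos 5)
Your proposal is correct and follows exactly the route the paper intends: the corollary is stated without proof immediately after Theorem \ref{thm_C(k,l)_connecting} precisely because it is that theorem at $k=\ell=\infty$ (giving (i) $\Leftrightarrow$ (iii) once one notes that extending the fold map $(\iota_\infty,\iota_\infty)$ over $BG\times BG$ is the same as an $H$-structure), combined with Proposition \ref{prp_higher_comutativities} and the colimit identification $\bigcup_{i,j}B_iG\times B_jG=BG\times BG$ for (i) $\Leftrightarrow$ (ii). Your flagged caveat about passing to $n=\infty$ is the right one to check, and it goes through since Theorems \ref{mainthm} and \ref{thm_extension_evaluation} are established for $n=\infty$ and the inclusions $B_iG\subset B_{i+1}G$ are cofibrations by Lemma \ref{lem_homotopy_invariance_of_B}.
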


For a pointed spaces $X$ and $Y$, we denote the half-smash product by
\begin{align*}
X\ltimes Y:=X\times Y/X\times *.
\end{align*}

\begin{cor}
Let $G$ be a well-pointed topological group having the homotopy type of a CW complex.
Then $G$ is a $C(1,\ell)$-space if and only if the map
\begin{align*}
G\ltimes B_\ell G\rightarrow BG,\quad (g,x)\mapsto B\alpha_g(\iota_\ell(x))
\end{align*}
is homotopic rel $B_\ell G$ to the composite of the projection $G\ltimes B_\ell G\rightarrow B_\ell G$ and the inclusion $\iota_\ell:B_\ell G\rightarrow BG$.
\end{cor}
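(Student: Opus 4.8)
The plan is to specialize Theorem~\ref{thm_C(k,l)_connecting} to the case $k=1$ and then unwind the resulting $A_1$-homotopy statement through two adjunctions. By Theorem~\ref{thm_C(k,l)_connecting}, $G$ is a $C(1,\ell)$-space if and only if the conjugation homomorphism $c\colon G\to\mathcal{A}_\ell(G,G)$, $g\mapsto\alpha_g$, is homotopic to the constant map at $\mathrm{id}_G$ as an $A_1$-map. Since an $A_1$-map is merely a pointed map together with a meaningless size in the contractible interval $[0,\infty)$, an $A_1$-homotopy is nothing but a pointed homotopy of the underlying maps, where the word \emph{pointed} refers to the basepoint $\mathrm{id}_G$ of the composition monoid $\mathcal{A}_\ell(G,G)$. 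Thus the condition becomes that $c$ is pointed-homotopic to the constant map $g\mapsto\mathrm{id}_G$, the homotopy fixing the image of $e$ at $\mathrm{id}_G$.

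Next I would transport this condition across the weak equivalence $\Phi:=(\iota_\ell)_{\#}\circ B_\ell\colon\mathcal{A}_\ell(G,G)\to\Map_0(B_\ell G,BG)$ of Lemma~\ref{lem_action_on_A_n}, which is $G$-equivariant and sends $\mathrm{id}_G$ to $\iota_\ell$. Because $G$ has the homotopy type of a CW complex, $\Phi$ induces a bijection on pointed homotopy classes $[G,\mathcal{A}_\ell(G,G)]_*\cong[G,\Map_0(B_\ell G,BG)]_*$, taken with basepoints $\mathrm{id}_G$ and $\iota_\ell$. Both $c$ and $\delta\colon g\mapsto B\alpha_g\circ\iota_\ell$ are the orbit maps of the respective $G$-actions through $\mathrm{id}_G$ and $\iota_\ell$, so equivariance of $\Phi$ together with $\Phi(\mathrm{id}_G)=\iota_\ell$ gives $\Phi\circ c=\delta$; likewise $\Phi$ carries the constant map at $\mathrm{id}_G$ to the constant map at $\iota_\ell$. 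Hence $G$ is a $C(1,\ell)$-space if and only if $\delta\colon G\to\Map_0(B_\ell G,BG)$ is pointed-homotopic to the constant map at $\iota_\ell$.

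Finally I would pass to the adjoint form using the exponential law $\Map(G\ltimes B_\ell G,BG)\cong\Map(G,\Map_0(B_\ell G,BG))$, which is a homeomorphism and hence carries homotopies to homotopies. The adjoint of $\delta$ is precisely the map $(g,x)\mapsto B\alpha_g(\iota_\ell(x))$ of the statement, and the adjoint of the constant map at $\iota_\ell$ is the composite of the projection $G\ltimes B_\ell G\to B_\ell G$ with $\iota_\ell$; a pointed homotopy rel $e$ on the mapping-space side corresponds exactly to a homotopy rel $\{e\}\times B_\ell G=B_\ell G$ on the adjoint side, which is the condition asserted. The main point requiring care throughout is the bookkeeping of basepoints: the relevant basepoint of $\Map_0(B_\ell G,BG)$ is $\iota_\ell$ rather than the constant map, one must verify that the equivariant equivalence intertwines the two orbit maps $c$ and $\delta$, and one must check that pointed homotopy rel $e$ matches homotopy rel $B_\ell G$ under the half-smash adjunction.
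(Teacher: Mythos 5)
Your proof is correct and follows the route the paper intends: the corollary is stated without proof as a specialization of Theorem \ref{thm_C(k,l)_connecting} to $k=1$, where the $A_1$-condition reduces to a pointed homotopy of the connecting map $\delta(g)=B\alpha_g\circ\iota_\ell$ into $\Map_0(B_\ell G,BG)$, transported via the equivariant equivalence of Lemma \ref{lem_action_on_A_n} and then adjointed through the half-smash. Your attention to the basepoint $\iota_\ell$ versus the constant map, and to the identification $\iota_\ell\circ B_\ell\alpha_g=B\alpha_g\circ\iota_\ell$, fills in exactly the details the paper leaves implicit.
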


\begin{rem}
\label{rem_connecting_Whitehead}
When $\ell=1$, $G\ltimes\Sigma G$ is naturally homotopy equivalent to $\Sigma(G\wedge G)\vee\Sigma G$.
Then it is easy to check that the composite $\Sigma(G\wedge G)\rightarrow G\ltimes\Sigma G\rightarrow BG$ is homotopic to the Whitehead product $[\iota_1,\iota_1]$.
This has been known by G. E. Lang \cite{Lan73}.
\end{rem}

\section{Application: $A_n$-types of gauge groups}
\label{section_appl_gauge}
The classification of $A_n$-types ($n\ge 2$) of gauge groups is first considered by M. C. Crabb and W. A. Sutherland for $n=2$ in \cite{CS00} after several works on the homotopy types begun with A. Kono's work \cite{Kon91}.
The author studied the $A_n$-types of the gauge groups of principal $\SU(2)$-bundles over $S^4$ for general $n$ in \cite{Tsu12} and \cite{Tsu15}.
In this section, we apply our result to the triviality of adjoint bundles.
It is an important problem in the classification of $A_n$-types of gauge groups.
The triviality we consider is defined as follows.

We refer to Section \ref{section_evaluation} for basic notions on gauge groups.
Let $G$ be a topological group and $P$ be a principal $G$-bundle over a space $B$.
The \textit{adjoint bundle} $\ad P$ is the associated bundle of $P$ induced by the conjugation on $G$ itself.
The adjoint bundle is naturally a fiberwise topological group, that is, there is a fiberwise multiplication $\ad P\times_B\ad P\rightarrow\ad P$ which makes the each fiber a topological group with continuous fiberwise inversion $\ad P\rightarrow\ad P$.
The space of sections $\Gamma(\ad P)$ of $\ad P$ is naturally isomorphic to the gauge group $\mathcal{G}(P)$.
Moreover, considering the obvious fiberwise version of $A_n$-map, each fiberwise $A_n$-map $\ad P\rightarrow\ad P$ induces an $A_n$-map $\mathcal{G}(P)\rightarrow\mathcal{G}(P')$.
If the underlying map of the fiberwise $A_n$-map is a homotopy equivalence, then the induced map on the gauge groups is also a homotopy equivalence.

\begin{dfn}
A fiberwise topological monoid $E\rightarrow B$ is said to be \textit{$A_n$-trivial} if there exist a topological monoid $G$ and a fiberwise $A_n$-map $B\times G\rightarrow E$ which restricts to a homotopy equivalence on each fiber.
\end{dfn}

Though the following proposition is partially proved in \cite{KK10}, we give another proof.

\begin{prp}
\label{prp_triviality_KK10}
Let $G$ be a well-pointed topological group and $B$ be a pointed space, both of which have the pointed homotopy type of CW complexes.
For a principal $G$-bundle $P$ over $B$ classified by $\epsilon:B\rightarrow BG$, the following conditions are equivalent:
\begin{enumerate}[(i)]
\item
$\ad P$ is $A_n$-trivial,

\item
the composite $B\xrightarrow{\epsilon}BG\rightarrow B\mathcal{W}\mathcal{A}_n(G,G;\mathrm{eq})$ is null-homotopic,

\item
the map $(\epsilon,\iota_n):B\vee B_nG\rightarrow BG$ extends over the product $B\times B_nG$.
\end{enumerate}
\end{prp}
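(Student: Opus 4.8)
The plan is to single out condition (ii)---that the composite $\theta\circ\epsilon$ is null-homotopic, where $\theta\colon BG\to B\mathcal{W}\mathcal{A}_n(G,G;\mathrm{eq})$ is the map induced by the conjugation homomorphism $G\to\mathcal{A}_n(G,G;\mathrm{eq})$---as the pivot, and to establish (i)$\Leftrightarrow$(ii) and (ii)$\Leftrightarrow$(iii) by lifting $\epsilon$ through two different homotopy fiber sequences, both of which realize the same map $\theta$.

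For (ii)$\Leftrightarrow$(iii) I would invoke the evaluation fiber sequence of Theorem \ref{thm_extension_evaluation},
\[
\overline{\Map}_0(B_nG,BG)\to\overline{\Map}(B_nG,BG)\xrightarrow{\mathrm{ev}}BG\xrightarrow{\theta}B\mathcal{W}\mathcal{A}_n(G,G;\mathrm{eq}),
\]
in which $\mathrm{ev}$ is the evaluation at the basepoint of $B_nG$. Since this is a homotopy fiber sequence, $\theta\circ\epsilon$ is null-homotopic if and only if $\epsilon$ lifts through $\mathrm{ev}$ to a map $\tilde\epsilon\colon B\to\overline{\Map}(B_nG,BG)$ sending the basepoint to $\iota_n$. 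By the exponential law such a lift is the same datum as a map $\Phi\colon B\times B_nG\to BG$ with $\Phi(b,\ast)=\epsilon(b)$ and $\Phi(\ast,x)=\iota_n(x)$, i.e.\ an extension of $(\epsilon,\iota_n)\colon B\vee B_nG\to BG$ over the product. The only point needing care is that $\tilde\epsilon$ must land in $\overline{\Map}(B_nG,BG)$ rather than in the whole mapping space; this holds on the basepoint component because $\tilde\epsilon(\ast)=\iota_n\in\overline{\Map}(B_nG,BG)$, and the argument is then carried out componentwise exactly as in the proof of Theorem \ref{thm_C(k,l)_connecting}, which is the special case $B=B_kG$, $\epsilon=\iota_k$.

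For (i)$\Leftrightarrow$(ii) I would use instead the homotopy fiber sequence
\[
\mathcal{A}_n(G,G;\mathrm{eq})\to B(\ast,G,\mathcal{A}_n(G,G;\mathrm{eq}))\to BG\xrightarrow{\theta}B\mathcal{W}\mathcal{A}_n(G,G;\mathrm{eq})
\]
produced in the proof of Theorem \ref{thm_extension_evaluation} via Proposition \ref{prp_homomorphism_fiber_seq}, where $G$ acts on $\mathcal{A}_n(G,G;\mathrm{eq})$ by $g\cdot f=\alpha_g\circ f$. Thus $\theta\circ\epsilon$ is null-homotopic if and only if $\epsilon$ lifts to $B(\ast,G,\mathcal{A}_n(G,G;\mathrm{eq}))=EG\times_G\mathcal{A}_n(G,G;\mathrm{eq})$, equivalently if and only if the associated bundle $P\times_G\mathcal{A}_n(G,G;\mathrm{eq})\to B$ (the pullback along $\epsilon$, using $\epsilon^\ast EG\simeq P$) admits a section. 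The remaining task is to identify such a section with an $A_n$-trivialization of $\ad P$: over a point $b$, a choice of $p\in P_b$ identifies the fiber $(\ad P)_b$ with $G$, and replacing $p$ by $pg$ twists this identification by the conjugation $\alpha_g$; hence a fiberwise $A_n$-equivalence $B\times G\to\ad P$ is exactly a $G$-equivariant assignment $p\mapsto(\text{self-}A_n\text{-equivalence of }G)$, that is, a section of $P\times_G\mathcal{A}_n(G,G;\mathrm{eq})$. Although the definition of $A_n$-triviality allows an arbitrary monoid, the restriction of a fiberwise $A_n$-equivalence to a fiber exhibits that monoid as weakly $A_n$-equivalent to $G$; by Corollary \ref{cor_inverse_of_A_n-equivalence} this equivalence is invertible in $\pi_0\mathcal{A}_n$, so precomposition reduces to the case of the trivial bundle $B\times G$.

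I expect the main obstacle to be this last step of (i)$\Leftrightarrow$(ii): setting up the fiberwise $A_n$-structure on $\ad P$ precisely and verifying that the frame-changing twist is exactly the conjugation action on $\mathcal{A}_n(G,G;\mathrm{eq})$, so that fiberwise $A_n$-trivializations correspond bijectively up to homotopy with sections of the associated bundle. Once that correspondence is in hand, both equivalences collapse to the single condition that $\theta\circ\epsilon$ be null-homotopic, and the componentwise bookkeeping distinguishing $\overline{\Map}$ from $\Map$ in (ii)$\Leftrightarrow$(iii) is routine after the basepoint component is pinned down.
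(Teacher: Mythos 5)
Your proposal is correct and follows essentially the same route as the paper's proof: (ii)$\Leftrightarrow$(iii) via the evaluation fiber sequence of Theorem \ref{thm_extension_evaluation}, and (i)$\Leftrightarrow$(ii) by identifying sections of the associated bundle $P\times_G\mathcal{A}_n(G,G;\mathrm{eq})$ with fiberwise $A_n$-trivializations of $\ad P$. The additional care you take in reducing the arbitrary monoid in the definition of $A_n$-triviality to $G$ itself via Corollary \ref{cor_inverse_of_A_n-equivalence} is a point the paper leaves implicit.
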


\begin{proof}
By the evaluation fiber sequence in Theorem \ref{thm_extension_evaluation}
\begin{align*}
\overline{\Map}(B_nG,BG)\rightarrow BG\rightarrow B\mathcal{W}\mathcal{A}_n(G,G;\mathrm{eq}),
\end{align*}
the conditions (ii) and (iii) are equivalent.
Now we check the equivalence between (i) and (ii).
Consider the associated bundle $E=P\times_G\mathcal{A}_n(G,G;\mathrm{eq})$ induced from the homomorphism $G\rightarrow\mathcal{A}_n(G,G;\mathrm{eq})$ giving the conjugation.
By construction, the existence of a section of $E$ and that of a fiberwise $A_n$-map $B\times G\rightarrow\ad P$ which restricts to a homotopy equivalence on each fiber are equivalent.
The former is equivalent to the condition (ii).
The latter is equivalent to the condition (i).
This completes the proof.
\end{proof}

\begin{rem}
By an obstruction argument, this proposition extends to the classification theorem of fiberwise $A_n$-equivalence class.
For details, see \cite{Tsu12} and \cite{Tsu15}.
\end{rem}

\begin{prp}
Let $G$ be a well-pointed topological group and $B$ be a pointed space, both of which have the pointed homotopy types of CW complexes.
For a principal $G$-bundle $P$ over the suspension $\Sigma B$ classified by $\epsilon:\Sigma B\rightarrow BG$, the adjoint bundle $\ad P$ is trivial as a fiberwise $A_n$-space if and only if the map
\begin{align*}
B\ltimes B_nG\rightarrow BG,\quad (b,x)\mapsto B\alpha_{\epsilon'(b)}(\iota_n(x))
\end{align*}
is homotopic rel $B_nG$ to the composite of the projection $B\ltimes B_nG\rightarrow B_nG$ and the map $\iota_n:B_nG\rightarrow BG$, where $\epsilon':B\rightarrow G$ is the adjoint of $\epsilon$.
\end{prp}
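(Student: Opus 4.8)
The plan is to deduce this from Proposition \ref{prp_triviality_KK10} applied with the suspension $\Sigma B$ as the base, together with the evaluation fiber sequence of Theorem \ref{thm_extension_evaluation}. By Proposition \ref{prp_triviality_KK10}, $\ad P$ is $A_n$-trivial if and only if the wedge map $(\epsilon,\iota_n)\colon\Sigma B\vee B_nG\to BG$ extends over the product $\Sigma B\times B_nG$. First I would reinterpret this extension problem through the exponential law: an extension $F\colon\Sigma B\times B_nG\to BG$ with $F|_{\ast\times B_nG}=\iota_n$ and $F|_{\Sigma B\times\ast}=\epsilon$ is the same datum as a based map $\hat F\colon\Sigma B\to\Map(B_nG,BG;\iota_n)$ with $\mathrm{ev}_\ast\circ\hat F=\epsilon$, where $\mathrm{ev}_\ast$ denotes evaluation at the basepoint. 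Thus the extension exists precisely when $\epsilon$ admits a based lift along the evaluation fibration $\mathrm{ev}_\ast\colon\overline{\Map}(B_nG,BG)\to BG$; here $\hat F$ necessarily lands in the path-component of $\iota_n$, which lies inside $\overline{\Map}(B_nG,BG)$, so nothing is lost by enlarging the target to $\overline{\Map}(B_nG,BG)$.

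Next I would pass to adjoints. By the suspension--loop adjunction a based lift $\hat F$ of $\epsilon$ corresponds to a based lift of the adjoint $\epsilon'\colon B\to\Omega BG\simeq G$ along $\Omega(\mathrm{ev}_\ast)$, the identification $G\simeq\Omega BG$ being Lemma \ref{lem_G_to_OmegaBG}. By Theorem \ref{thm_extension_evaluation} and the identification of the connecting map recorded in Section \ref{section_evaluation}, the relevant fiber sequence reads $\Omega\overline{\Map}(B_nG,BG)\xrightarrow{\Omega\mathrm{ev}_\ast}\Omega BG\xrightarrow{\delta}\overline{\Map}_0(B_nG,BG)$ with $\delta(g)=B\alpha_g\circ\iota_n$. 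Exactness of the induced sequence of based homotopy classes $[B,\,-\,]$ then shows that such a lift of $\epsilon'$ exists if and only if the composite $\delta\circ\epsilon'\colon B\to\overline{\Map}_0(B_nG,BG)$, $b\mapsto B\alpha_{\epsilon'(b)}\circ\iota_n$, is null-homotopic as a based map.

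Finally I would translate the vanishing of $[\delta\circ\epsilon']$ through the half-smash adjunction. A based map $B\to\Map_0(B_nG,BG)$ is the same as a map $B\ltimes B_nG\to BG$ restricting to $\iota_n$ on the copy $\{\ast\}\times B_nG$, and a based null-homotopy (rel the basepoint of $B$) corresponds exactly to a homotopy rel $B_nG$ terminating at the constant map at $\iota_n$. Under this dictionary the adjoint of $\delta\circ\epsilon'$ is the map $(b,x)\mapsto B\alpha_{\epsilon'(b)}(\iota_n(x))$, while the constant map at $\iota_n$ becomes the composite of the projection $B\ltimes B_nG\to B_nG$ with $\iota_n$; this is precisely the stated criterion. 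The main obstacle is bookkeeping: one must verify that the three adjunctions--the product--mapping-space exponential law, the suspension--loop adjunction, and the half-smash adjunction--are mutually compatible and carry the pointed and rel-$B_nG$ constraints correctly, and that every lift occurs within the components comprising $\overline{\Map}(B_nG,BG)$ so that Theorem \ref{thm_extension_evaluation} applies. Once the connecting map is identified as $g\mapsto B\alpha_g\circ\iota_n$, the remaining verifications are formal.
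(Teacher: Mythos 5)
Your argument is correct and follows the same route as the paper: the paper's own proof simply cites Proposition \ref{prp_triviality_KK10} together with the evaluation fiber sequence $G\rightarrow\overline{\Map}_0(B_nG,BG)\rightarrow\overline{\Map}(B_nG,BG)\rightarrow BG\rightarrow B\mathcal{W}\mathcal{A}_n(G,G;\mathrm{eq})$ from Theorem \ref{thm_extension_evaluation}, and your write-up just makes explicit the lifting/adjunction bookkeeping (identifying the connecting map with $g\mapsto B\alpha_g\circ\iota_n$ and translating a based null-homotopy into a homotopy rel $B_nG$ on $B\ltimes B_nG$) that the paper leaves implicit.
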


\begin{proof}
This immediately follows from the evaluation fiber sequence
\begin{align*}
G\rightarrow\overline{\Map}_0(B_nG,BG)\rightarrow\overline{\Map}(B_nG,BG)\rightarrow BG\rightarrow B\mathcal{W}\mathcal{A}_n(G,G;\mathrm{eq})
\end{align*}
and Proposition \ref{prp_triviality_KK10}.
\end{proof}

\section{Application: $T_k^f$-spaces}
\label{section_appl_cyclic}
Iwase--Mimura--Oda--Yoon defined \textit{$C_k^f$-spaces} and \textit{$T_k^f$-spaces} in \cite{IMOY12}.
Since the terminology ``$C_k$'' is now confusing, we consider $T_k^f$-spaces.

\begin{dfn}
For a pointed map $f\colon A\rightarrow X$ between the well-pointed spaces of the homotopy types of CW complexes, $X$ is said to be a \textit{$T_k^f$-space} if there exists a map $f_k\colon A\times B_k\Omega^{\mathrm{M}}X\rightarrow X$ such that the following diagram commutes up to homotopy:
\begin{align*}
\xymatrix{
A\vee B_1\Omega^{\mathrm{M}}X \ar[dr]^{(f,\iota_1)} \ar[d]
	& \\
A\times B_k\Omega^{\mathrm{M}}X \ar[r]_-{f_k}
	& X.
}
\end{align*}
In particular, a $T_k^{\operatorname{id}_X}$-space is what J. Aguade \cite{Agu87} defined as a $T_k$-space.
\end{dfn}

\begin{prp}
\label{prp_C_k_T_k}
Let $A$ and $X$ be well-pointed spaces, which have the pointed homotopy types of CW complexes.
For a pointed map $f\colon A\rightarrow X$, $X$ is a $T_k^f$-space if and only if there exists a map $f_k\colon A\times B_k\Omega^{\mathrm{M}}X\rightarrow X$ such that the following diagram commutes up to homotopy:
\begin{align*}
\xymatrix{
A\vee B_k\Omega^{\mathrm{M}}X \ar[dr]^{(f,\iota_k)} \ar[d]
	& \\
A\times B_k\Omega^{\mathrm{M}}X \ar[r]_-{f_k}
	& X.
}
\end{align*}
\end{prp}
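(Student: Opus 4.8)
The plan is to put $G:=\Omega^{\mathrm{M}}X$, which by the discussion preceding the statement is a well-pointed grouplike topological monoid of the homotopy type of a CW complex, and to run everything through the adjunction $\mathcal{A}_k(G,G)\simeq\Map_0(B_kG,X)$ furnished by Theorem \ref{mainthm} together with the natural equivalence $B\Omega^{\mathrm{M}}X\simeq X$ onto the basepoint component. Since $B_kG$ is path-connected, every pointed map $B_kG\to X$ lands in this component, so the adjunction applies; under it the inclusion $\iota_k\colon B_kG\to X$ corresponds to the identity self-$A_k$-map $\mathrm{id}_G$, and the restriction of an $A_k$-map to the bottom cell $B_1G\cong\Sigma G$ records only its underlying map.

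One implication is immediate. If $f_k$ restricts to $\iota_k$ on $\{*\}\times B_k\Omega^{\mathrm{M}}X$, then because $\iota_k\circ\iota_1^k=\iota_1$ it restricts to $\iota_1$ on $B_1\Omega^{\mathrm{M}}X$; together with $f_k|_A\simeq f$ this is exactly the defining diagram of a $T_k^f$-space, witnessed by the very same $f_k$.

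For the converse I would begin with a map $f_k$ witnessing that $X$ is a $T_k^f$-space and set $g:=f_k|_{\{*\}\times B_k\Omega^{\mathrm{M}}X}\colon B_kG\to X$, with adjoint $\phi\in\mathcal{A}_k(G,G)$, so that $g\simeq\iota_k\circ B_k\phi$. The hypothesis $g|_{B_1G}\simeq\iota_1$ constrains only the bottom cell: since $B_k\phi\circ\iota_1^k=\iota_1^k\circ\Sigma\phi_1$, one gets $\iota_1\circ\Sigma\phi_1\simeq\iota_1\simeq\iota_1\circ\Sigma\,\mathrm{id}_G$, and the $n=1$ case of Theorem \ref{mainthm}, being a $\pi_0$-bijection, forces $\phi_1\simeq\mathrm{id}_G$. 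Thus $\phi$ is a weak $A_k$-equivalence, so by Corollary \ref{cor_inverse_of_A_n-equivalence} it has an inverse $\psi$ in $\pi_0\mathcal{A}_k$. I would then define the corrected witness $f_k':=f_k\circ(\mathrm{id}_A\times B_k\psi)\colon A\times B_k\Omega^{\mathrm{M}}X\to X$. As $B_k\psi$ is pointed, $f_k'|_A=f_k|_A\simeq f$, while by functoriality and homotopy invariance of $B_k$ together with $\phi\circ\psi\simeq\mathrm{id}_G$ we get $f_k'|_{B_kG}=g\circ B_k\psi\simeq\iota_k\circ B_k(\phi\circ\psi)\simeq\iota_k$. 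These two pointed homotopies agree at the basepoint and glue over $A\vee B_k\Omega^{\mathrm{M}}X$, giving the homotopy-commutative diagram for $(f,\iota_k)$.

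The main obstacle is the converse, and precisely the fact that the given $g$ need not be homotopic to $\iota_k$ rel $B_1$, so one cannot simply deform $g$. The resolution is to pre-compose with $B_k\psi$ rather than to modify $g$, and this is exactly the point where one needs an honest inverse $A_k$-\emph{map} (Corollary \ref{cor_inverse_of_A_n-equivalence}) and not merely a homotopy inverse of underlying maps; I would take care that the passage from $g$ to $\phi$ and the extraction of $\phi_1\simeq\mathrm{id}_G$ use only the $B_1$-level comparison, so that nothing beyond the $T_k^f$-hypothesis is smuggled in.
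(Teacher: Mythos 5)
Your argument is correct and is essentially the paper's own proof: the paper isolates your converse step as a separate lemma (given $F\colon B_kG\to BG$ whose restriction to $B_1G$ has adjoint a homotopy equivalence, produce a self-equivalence $F'$ of $B_kG$ with $F\circ F'\simeq\iota_k$), proved exactly as you do via Theorem \ref{mainthm} and Corollary \ref{cor_inverse_of_A_n-equivalence}, and then corrects the witness by precomposing with $\mathrm{id}_A\times F'$. No substantive difference.
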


\begin{rem}
Iwase--Mimura--Oda--Yoon defined $C_k^f$-spaces by the condition in this proposition.
Therefore, it states that the $C_k^f$-space and the $T_k^f$-space are exactly the same concept.
\end{rem}

The if part of Proposition \ref{prp_C_k_T_k} is trivial.
The only if part immediately follows from the next lemma.

\begin{lem}
Let $G$ be a well-pointed grouplike topological monoid having the homotopy type of a CW complex.
For a pointed map $F\colon B_nG\rightarrow BG$, if the adjoint $G\rightarrow G$ of the composite $F\circ\iota_1^n:B_1G\rightarrow BG$ is a homotopy equivalence, then there exists a homotopy equivalence $F'\colon B_nG\rightarrow B_nG$ such that the composite $F\circ F':B_nG\rightarrow BG$ is pointed homotopic to $\iota_n$.
\end{lem}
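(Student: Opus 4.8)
The plan is to reduce everything to the adjoint correspondence of Theorem \ref{mainthm} together with the existence of homotopy inverses in $\pi_0\mathcal{A}_n$. First I would let $f=(f_1,\{f_i\}_{i=1}^n,\ell)\in\mathcal{A}_n(G,G)$ be an $A_n$-map adjoint to $F$, so that by Theorem \ref{mainthm} (applied with $G'=G$, which is legitimate since $G$ is well-pointed, grouplike and of the homotopy type of a CW complex) one has $F\simeq\iota_n\circ B_nf$ as pointed maps $B_nG\to BG$. Since the weak equivalence $(\iota_n)_{\#}\circ B_n$ is compatible with the forgetful projections $\mathcal{A}_n(G,G)\to\mathcal{A}_1(G,G)=\Map_0(G,G)$ and with the restriction along $\iota_1^n\colon B_1G\to B_nG$, the underlying map $f_1$ is precisely the adjoint of $F\circ\iota_1^n$. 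Hence the hypothesis says exactly that $f_1$ is a homotopy equivalence, that is, $f$ is a \emph{weak $A_n$-equivalence}.

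Next I would apply Corollary \ref{cor_inverse_of_A_n-equivalence}: the weak $A_n$-equivalence $f$ admits an inverse $g\in\mathcal{A}_n(G,G)$ in the homotopy category $\pi_0\mathcal{A}_n$, so that $f\circ g$ and $g\circ f$ are each homotopic to $\operatorname{id}_G$ in $\mathcal{A}_n(G,G)$. I then set $F':=B_ng\colon B_nG\to B_nG$. Passing to underlying maps, the homotopies $g\circ f\simeq\operatorname{id}_G$ and $f\circ g\simeq\operatorname{id}_G$ give $g_1\circ f_1\simeq\operatorname{id}_G$ and $f_1\circ g_1\simeq\operatorname{id}_G$, so $g_1$ is a homotopy equivalence; by Lemma \ref{lem_homotopy_invariance_of_B}(ii) together with Lemma \ref{lem_B_htype_CW} this makes $F'=B_ng$ a homotopy equivalence, as required.

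Finally I would compute the composite. Using $F\simeq\iota_n\circ B_nf$ and functoriality of $B_n$,
\[
F\circ F'\simeq\iota_n\circ B_nf\circ B_ng=\iota_n\circ B_n(f\circ g).
\]
Since $B_n$ is a continuous functor (Section \ref{section_bar_construction}), a path from $f\circ g$ to $\operatorname{id}_G$ in $\mathcal{A}_n(G,G)$ is sent to a homotopy from $B_n(f\circ g)$ to $B_n(\operatorname{id}_G)=\operatorname{id}_{B_nG}$, whence $F\circ F'\simeq\iota_n\circ\operatorname{id}_{B_nG}=\iota_n$, which is the desired conclusion.

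I expect the only genuinely delicate point to be the bookkeeping in the first paragraph: verifying that the underlying map $f_1$ of the adjoint $A_n$-map really is the adjoint of the restriction $F\circ\iota_1^n$, which rests on the naturality of the equivalence in Theorem \ref{mainthm} with respect to the projective-space tower and on the identification of $\mathcal{A}_1(G,G)$ with $\Map_0(G,G)$. Everything after this translation is a formal manipulation inside $\pi_0\mathcal{A}_n$ and its image under the homotopy-functorial $B_n$.
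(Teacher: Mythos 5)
Your proposal is correct and follows essentially the same route as the paper: take the adjoint $A_n$-map $f$ via Theorem \ref{mainthm}, note its underlying map is a homotopy equivalence by hypothesis, invert it in $\pi_0\mathcal{A}_n$ via Corollary \ref{cor_inverse_of_A_n-equivalence}, and set $F'=B_n$ of the inverse. The extra care you take in identifying $f_1$ with the adjoint of $F\circ\iota_1^n$ and in justifying that $F'$ is a homotopy equivalence (via Lemma \ref{lem_homotopy_invariance_of_B} and Lemma \ref{lem_B_htype_CW}) is sound and merely makes explicit what the paper leaves implicit.
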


\begin{proof}
By Theorem \ref{mainthm}, there exists an $A_n$-map $f\in\mathcal{A}_n(G,G)$ such that $\iota_n\circ B_nf$ is homotopic to $F$ and the underlying map of $f$ is a homotopy equivalence.
Then by Corollary \ref{cor_inverse_of_A_n-equivalence}, there exists an $A_n$-map $f'\in\mathcal{A}_n(G,G)$ which gives the inverse of $f$ in the homotopy category $\pi_0\mathcal{A}_n$.
Thus, for $F':=B_nf'$, we obtain that $F\circ F'$ is homotopic to $\iota_n$.
\end{proof}

There are certain relations between $T_k^f$-space and the objects in the preceding two sections as follows.
One can prove them by straightforward argument.

\begin{prp}[cf. \cite{HK11}]
For a well-pointed space $X$ of the homotopy type of a CW complex, $X$ is a $T_k$-space if and only if $\Omega^{\mathrm{M}}X$ is a $C(\infty,k)$-space.
\end{prp}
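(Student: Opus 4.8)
The plan is to reduce both conditions to one and the same extension problem, using the natural homotopy equivalence $B\Omega^{\mathrm{M}}X\simeq X$ recorded in the remark following Theorem \ref{mainthm}. Write $G:=\Omega^{\mathrm{M}}X$. Both the $T_k$-condition for $X$ and the $C(\infty,k)$-condition for $G$ depend only on the path-component of the basepoint of $X$ (the loop space $G$ and the maps $\iota_m$ only ever see that component), so we may assume $X$ is path-connected; then $G$ is a well-pointed grouplike topological monoid of CW type, and Propositions \ref{prp_C_k_T_k} and \ref{prp_higher_comutativities} both apply. Fix a pointed homotopy equivalence $\theta\colon BG=B_\infty G\xrightarrow{\simeq}X$ with a pointed homotopy inverse $\theta^{-1}$, chosen so that the composite $\theta\circ\iota_k\colon B_kG\to X$ is exactly the map denoted $\iota_k$ in the definition of a $T_k$-space.

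First I would rewrite the $T_k$-condition. By definition $X$ is a $T_k$-space precisely when it is a $T_k^{\operatorname{id}_X}$-space, and Proposition \ref{prp_C_k_T_k}, applied with $A=X$ and $f=\operatorname{id}_X$, rephrases this as the assertion that $(\operatorname{id}_X,\iota_k)\colon X\vee B_kG\to X$ extends up to homotopy over the product $X\times B_kG$. Next I would rewrite the $C(\infty,k)$-condition. Proposition \ref{prp_higher_comutativities} (ii), read with the indices $\infty$ and $k$, says that $G$ is a $C(\infty,k)$-space exactly when $(\iota_\infty,\iota_k)\colon B_\infty G\vee B_kG\to BG$ extends over $B_\infty G\times B_kG$; since $B_\infty G=BG$ and $\iota_\infty=\operatorname{id}_{BG}$, this is the extension problem for $(\operatorname{id}_{BG},\iota_k)\colon BG\vee B_kG\to BG$ over $BG\times B_kG$.

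Finally I would transport extensions across $\theta$. Given an extension $\Phi\colon BG\times B_kG\to BG$ of $(\operatorname{id}_{BG},\iota_k)$, the composite $\theta\circ\Phi\circ(\theta^{-1}\times\operatorname{id})\colon X\times B_kG\to X$ restricts on the wedge $X\vee B_kG$ to a map homotopic to $(\operatorname{id}_X,\iota_k)$: on $X\times *$ this uses $\theta\circ\theta^{-1}\simeq\operatorname{id}_X$, and on $*\times B_kG$ it uses that $\theta^{-1}$ is pointed, whence $\Phi(\theta^{-1}(*),-)=\iota_k$, together with $\theta\circ\iota_k$ being, by our choice, the map $\iota_k$ of the $T_k$-definition. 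The reverse implication is entirely symmetric, conjugating an extension over $X\times B_kG$ by $\theta^{-1}$. Hence the two extension problems are solvable simultaneously, which is exactly the asserted equivalence.

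The only point requiring genuine care is the gluing in the last step: one must check that the homotopies produced separately on $X\times *$ and on $*\times B_kG$ agree at the wedge point and so assemble into a single homotopy on the wedge $X\vee B_kG$. This is routine because $\theta$ and $\theta^{-1}$ are pointed and all spaces in sight are well-pointed of the homotopy type of a CW complex; beyond this bookkeeping the proof is a direct unwinding of the two cited characterizations, and that unwinding is essentially the only obstacle.
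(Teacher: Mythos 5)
Your proof is correct and is essentially the intended argument: the paper offers no proof of this proposition (it is dismissed as a ``straightforward argument''), and your reduction of both conditions, via Propositions \ref{prp_C_k_T_k} and \ref{prp_higher_comutativities}(ii) and the natural equivalence $B\Omega^{\mathrm{M}}X\simeq X$, to the single extension problem for $(\operatorname{id}_{BG},\iota_k)\colon BG\vee B_kG\to BG$ over $BG\times B_kG$ is exactly the natural way to fill that gap. The points you flag as requiring care (choosing $\theta$ compatibly with the maps $\iota_k$, gluing pointed homotopies at the wedge point, and handling non-basepoint components of $X$) are indeed the only delicate spots, and each is handled correctly by well-pointedness and the CW hypotheses.
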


\begin{prp}
Let $G$ be a well-pointed topological group and $B$ be a well-pointed space, both of which have the homotopy types of CW complexes.
Then, for a principal $G$-bundle $P$ over $B$ classified by $\epsilon:B\rightarrow BG$, the adjoint bundle $\ad P$ is $A_n$-trivial if and only if $BG$ is a $T_n^{\epsilon}$-space.
\end{prp}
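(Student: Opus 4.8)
The plan is to match the defining condition of a $T_n^{\epsilon}$-space with condition (iii) of Proposition \ref{prp_triviality_KK10}, namely that the wedge map $(\epsilon,\iota_n)\colon B\vee B_nG\to BG$ extends over the product $B\times B_nG$. Since (iii) is equivalent to the $A_n$-triviality of $\ad P$ by that proposition, the asserted equivalence will follow once the two extension problems are identified. The only discrepancy between them is that a $T_n^{\epsilon}$-space is defined through $B_n\Omega^{\mathrm{M}}BG$ rather than $B_nG$, so the core of the argument is a comparison of these projective spaces compatible with their maps into $BG$.

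First I would apply Corollary \ref{cor_OmegaB_n} with $n=\infty$ to produce the $A_\infty$-map $\eta\colon G\to\Omega^{\mathrm{M}}BG$ whose adjoint $B\eta\colon BG\to B\Omega^{\mathrm{M}}BG$ is a homotopy equivalence. As the weak homotopy type of a grouplike monoid is recovered from its classifying space (Lemma \ref{lem_G_to_OmegaBG}), $\eta$ is itself a weak equivalence, hence a weak $A_n$-equivalence for every $n$. Lemma \ref{lem_homotopy_invariance_of_B}(ii) then yields a weak equivalence $B_n\eta\colon B_nG\to B_n\Omega^{\mathrm{M}}BG$, which is a genuine homotopy equivalence since both sides have CW homotopy type by Lemma \ref{lem_B_htype_CW}, and which fits into the commutative square
\begin{align*}
\xymatrix{
B_nG \ar[r]^-{B_n\eta} \ar[d]_-{\iota_n} & B_n\Omega^{\mathrm{M}}BG \ar[d]^-{\iota_n} \\
BG \ar[r]^-{B\eta} & B\Omega^{\mathrm{M}}BG.
}
\end{align*}

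Next I would use the homotopy commutative diagram of Corollary \ref{cor_OmegaB_n} to identify $B\eta$ with the canonical zig-zag equivalence $B\Omega^{\mathrm{M}}BG\simeq BG$ of the remark following Theorem \ref{mainthm}; this shows the composite of $B\eta$ with that identification is homotopic to $\mathrm{id}_{BG}$, so that along $B_n\eta$ the map $\iota_n\colon B_n\Omega^{\mathrm{M}}BG\to BG$ appearing in the definition of a $T_n^{\epsilon}$-space transports to the standard inclusion $\iota_n\colon B_nG\to BG$. Granting this, Proposition \ref{prp_C_k_T_k} says $BG$ is a $T_n^{\epsilon}$-space exactly when $(\epsilon,\iota_n)\colon B\vee B_n\Omega^{\mathrm{M}}BG\to BG$ extends over $B\times B_n\Omega^{\mathrm{M}}BG$; precomposing an extension with $\mathrm{id}_B\times B_n\eta$ (and using a homotopy inverse for the converse) turns this into the extendability of $(\epsilon,\iota_n)$ over $B\times B_nG$, which is condition (iii) of Proposition \ref{prp_triviality_KK10}, and (i)$\Leftrightarrow$(iii) there completes the proof.

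The step demanding the most care is the identification that $\iota_n$ transports to $\iota_n$: one must verify that $BG\xrightarrow{B\eta}B\Omega^{\mathrm{M}}BG\simeq BG$ is homotopic to the identity, rather than merely to some self-equivalence of $BG$, so that the transported extension problem is exactly the right one. This is precisely what the commutative diagram in Corollary \ref{cor_OmegaB_n} encodes, so the obstacle dissolves once that diagram is read correctly; the remaining passages between extension problems are formal, since extendability of a wedge map over a product is preserved when a factor is replaced by a homotopy-equivalent space.
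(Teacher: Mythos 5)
Your argument is correct, and it supplies in full the proof that the paper omits (the paper only remarks that these propositions ``can be proved by straightforward argument''); the route you take---reducing the $T_n^{\epsilon}$ condition via Proposition \ref{prp_C_k_T_k} to the extension problem for $(\epsilon,\iota_n)$ over $B\times B_n\Omega^{\mathrm{M}}BG$, transporting it to $B\times B_nG$ along $B_n\eta$ using Corollary \ref{cor_OmegaB_n} to pin down that $B\eta$ agrees with the canonical equivalence $BG\simeq B\Omega^{\mathrm{M}}BG$, and then invoking the equivalence (i)$\Leftrightarrow$(iii) of Proposition \ref{prp_triviality_KK10}---is exactly the intended one. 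You also correctly isolate the one point needing care, namely that $\iota_n$ transports to $\iota_n$ on the nose up to homotopy and not merely up to a self-equivalence of $BG$.
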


Let us denote the Moore free loop space of $X$ by $L^{\mathrm{M}}X$.
The next proposition is a generalization of Aguad\'e's definition of $T$-space in \cite{Agu87}.

\begin{prp}
Let $A$ and $X$ be well-pointed spaces of the homotopy types of CW complexes and $f\colon A\to X$ a pointed map.
Then the following two conditions are equivalent:
\begin{enumerate}[(i)]
\item
$X$ is a $T^f_k$-space,
\item
the pullback $f^\ast L^{\mathrm{M}}X$ is $A_k$-trivial.
\end{enumerate}
\end{prp}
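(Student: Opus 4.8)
The plan is to reduce condition (ii) to the $A_k$-triviality of an adjoint bundle and then invoke Proposition \ref{prp_triviality_KK10} and Proposition \ref{prp_C_k_T_k}. Set $G:=\Omega^{\mathrm{M}}X$, which is a well-pointed grouplike topological monoid of the homotopy type of a CW complex, and recall from the remark after Theorem \ref{mainthm} that there is a natural homotopy equivalence $BG\simeq X$ (onto the path-component of the basepoint). As in the proof of Proposition \ref{prp_mainthm}, I would choose a topological group $\tilde G$ that is a CW complex together with an $A_\infty$-equivalence $\tilde G\to G$; then $B\tilde G\simeq BG\simeq X$, and by Corollary \ref{cor_inverse_of_A_n-equivalence}, Proposition \ref{prp_equiv_mappingsp_A_n} and Lemma \ref{lem_homotopy_invariance_of_B} every construction below is insensitive to this replacement. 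Under these equivalences $B_k\tilde G\simeq B_k\Omega^{\mathrm{M}}X$ and the map $\iota_k\colon B_k\tilde G\to B\tilde G$ corresponds to $\iota_k\colon B_k\Omega^{\mathrm{M}}X\to X$, while $f\colon A\to X$ corresponds to a classifying map $\epsilon\colon A\to B\tilde G$.

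The geometric heart of the argument is the classical identification of the free loop space of a classifying space with an adjoint bundle. Concretely, I would produce a fiberwise $A_\infty$-equivalence over $X$
\[
	\ad(E\tilde G)=E\tilde G\times_{\tilde G}\tilde G\ \xrightarrow{\ \simeq\ }\ L^{\mathrm{M}}X
\]
of fiberwise grouplike topological monoids, sending a class $[e,g]$ to the Moore loop obtained from the contracting homotopy $\kappa$ of the bar-construction model $E\tilde G=B(\ast,\tilde G,\tilde G)$ used in Lemma \ref{lem_B(X,G,G)}; here the conjugation action supplies exactly the monodromy that turns the fiber $\tilde G$ into $\Omega^{\mathrm{M}}X$-loops, and the concatenation of loops matches the fiberwise multiplication $[e,g][e,g']=[e,gg']$. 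Pulling back along $\epsilon$ and using that the adjoint bundle commutes with pullback, this gives a fiberwise $A_\infty$-equivalence $\ad P\simeq f^\ast L^{\mathrm{M}}X$, where $P:=\epsilon^\ast E\tilde G$ is the principal $\tilde G$-bundle classified by $\epsilon$. Since a fiberwise $A_\infty$-equivalence is in particular a fiberwise $A_k$-equivalence and composition of fiberwise $A_k$-maps preserves the defining property of $A_k$-triviality, condition (ii) holds if and only if $\ad P$ is $A_k$-trivial.

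With this in hand, the proof closes by two citations. By Proposition \ref{prp_triviality_KK10} (with $n=k$), $\ad P$ is $A_k$-trivial if and only if the map $(\epsilon,\iota_k)\colon A\vee B_k\tilde G\to B\tilde G$ extends over the product $A\times B_k\tilde G$. Transporting this across the equivalences $B\tilde G\simeq X$ and $B_k\tilde G\simeq B_k\Omega^{\mathrm{M}}X$, it becomes the statement that $(f,\iota_k)\colon A\vee B_k\Omega^{\mathrm{M}}X\to X$ extends over $A\times B_k\Omega^{\mathrm{M}}X$, which by Proposition \ref{prp_C_k_T_k} is exactly condition (i), that $X$ is a $T_k^f$-space. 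This yields the desired equivalence (i)$\Leftrightarrow$(ii).

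The step I expect to be the main obstacle is the fiberwise identification $\ad(E\tilde G)\simeq L^{\mathrm{M}}X$: one must check that it is genuinely a map of fiberwise monoids (not merely of fibrations), that it is a fiberwise $A_\infty$-equivalence over $X$ so that $A_k$-triviality transports across it, and that it is natural enough to commute with the pullback along $f$. Setting this up carefully in the Moore bar-construction model, so that the conjugation action is matched with loop concatenation up to coherent higher homotopy, is where the real work lies; the remaining reductions are formal consequences of the homotopy-invariance results of Sections \ref{section_category_of_A_n-maps} and \ref{section_bar_construction}.
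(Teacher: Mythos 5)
Your proposal is correct and follows essentially the same route as the paper: replace $\Omega^{\mathrm{M}}X$ by a CW topological group model via an $A_\infty$-equivalence, identify $f^\ast L^{\mathrm{M}}X$ with the pulled-back adjoint bundle $\ad P$ through the fiberwise $A_\infty$-equivalence $\ad E\tilde G\simeq L^{\mathrm{M}}B\tilde G$ (the paper's Lemma \ref{lem_adjoint_freeloop}), and then conclude by Proposition \ref{prp_triviality_KK10} together with Proposition \ref{prp_C_k_T_k}. You have also correctly located where the real work lies, namely in establishing the fiberwise monoid identification of the adjoint bundle with the Moore free loop space, which the paper itself only outlines by combining the contracting homotopy of $B(\ast,\tilde G,\tilde G)$ with the comparison $\ad E\tilde G\simeq E\tilde G\times_{\tilde G}\Omega^{\mathrm{M}}B\tilde G$ from \cite{Tsu12}.
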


\begin{proof}
Take a group model $G_X\xrightarrow{\simeq}\Omega^{\mathrm{M}}X$ which is an $A_\infty$-equivalence such that $G_X$ is a CW complex.
Then, by the following Lemma \ref{lem_adjoint_freeloop}, $f^\ast L^{\mathrm{M}}X$ is $A_k$-trivial if and only if $f^\ast\ad EG_X$ is $A_k$-trivial.
Combining this with Proposition \ref{prp_triviality_KK10}, we obtain the desired result.
\end{proof}

\begin{lem}
\label{lem_adjoint_freeloop}
Let $G$ be a well-pointed topological group having the homotopy type of CW complex.
Then, the adjoint bundle $\ad EG$ and the free loop space $L^{\mathrm{M}}BG$ are fiberwise $A_\infty$-equivalent as fiberwise topological monoids over $BG$.
\end{lem}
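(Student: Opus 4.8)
The plan is to exhibit both fiberwise monoids over $BG$ as bundles associated to the universal bundle by the conjugation action, and to compare them through the $A_\infty$-equivalence $\eta\colon G\to\Omega^{\mathrm{M}}BG$ of Corollary \ref{cor_OmegaB_n}. First I would rewrite the adjoint bundle as a Borel construction: since $EG=B(*,G,G)$ is a free right $G$-space and the conjugation multiplication makes $G^{\mathrm{ad}}$ (that is, $G$ with the left action $g\cdot x=gxg^{-1}$) a left $G$-space whose multiplication $G^{\mathrm{ad}}\times G^{\mathrm{ad}}\to G^{\mathrm{ad}}$ is $G$-equivariant, we have $\ad EG\cong EG\times_GG^{\mathrm{ad}}=B(*,G,G^{\mathrm{ad}})$ as a fiberwise topological group over $BG=B(*,G,*)$. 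By construction this is the bundle classified by the conjugation homomorphism $c\colon G\to\mathcal{A}_\infty(G,G;\mathrm{eq})$, $g\mapsto\alpha_g$, exactly as in the associated bundle $E=P\times_G\mathcal{A}_n(G,G;\mathrm{eq})$ appearing in the proof of Proposition \ref{prp_triviality_KK10}.

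Next I would construct a fiberwise $A_\infty$-map $\ad EG\to L^{\mathrm{M}}BG$ by holonomy. Using the explicit contraction of $EG=B(*,G,G)$ from Lemma \ref{lem_B(X,G,G)} (the homotopy built from $\kappa$), every point $e\in EG$ comes with a canonical Moore path from $e$ to $eg$ for each $g\in G$; projecting this path to $BG$ yields a Moore loop based at the image $b=[e]$, and the assignment descends along $EG\times_GG^{\mathrm{ad}}$ because changing $e$ to $eh$ replaces $g$ by $h^{-1}gh$ and conjugates the resulting loop correspondingly. On each fiber this holonomy map is exactly $\eta\colon G\xrightarrow{\simeq}\Omega^{\mathrm{M}}BG$, hence a homotopy equivalence. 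The fiberwise monoid structures match up to coherent homotopy: concatenating the canonical transport paths for $g$ and $g'$ is, after the standard Moore reparametrizations, the transport path for $gg'$, so that loop concatenation realizes the conjugation-multiplication of $G^{\mathrm{ad}}$ up to the higher homotopies that assemble into the $A_\infty$-form.

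Finally I would conclude that this fiberwise $A_\infty$-map is a fiberwise $A_\infty$-equivalence: it covers $\operatorname{id}_{BG}$ and restricts to an equivalence on fibers, so it is a fiberwise equivalence, and the previous step shows it respects the fiberwise multiplications coherently. Equivalently, one may invoke the classification of fiberwise $A_\infty$-equivalence classes (the obstruction-theoretic extension of Proposition \ref{prp_triviality_KK10}, cf. \cite{Tsu12,Tsu15}): it suffices that the classifying maps of $\ad EG$ and of $L^{\mathrm{M}}BG$ into $B\mathcal{W}\mathcal{A}_\infty(G,G;\mathrm{eq})$ agree, and both are the conjugation map once $\Omega^{\mathrm{M}}BG$ is identified with $G$ through $\eta$, since the monodromy of the free loop fibration is conjugation of loops. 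The main obstacle is precisely this coherence: a topological principal bundle carries no canonical connection, so the holonomy transport must be produced from the bar-model contraction and one must check that it intertwines the pointwise group multiplication of $G^{\mathrm{ad}}$ with loop concatenation not merely up to homotopy but compatibly with all the higher structure maps; this homotopy-coherent bookkeeping is what forces the comparison to be an $A_\infty$-equivalence rather than a strict isomorphism and is the technical heart of the argument.
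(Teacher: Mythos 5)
Your strategy is essentially the paper's (the paper itself only outlines the proof): use the contraction of $EG$ from Lemma~\ref{lem_B(X,G,G)} to produce Moore paths in $BG$, arrange that the comparison restricts on fibers to $\eta\colon G\to\Omega^{\mathrm{M}}BG$, and let the $A_\infty$-machinery absorb the coherences. The difference is that the paper factors the equivalence as $\ad EG\simeq EG\times_G\Omega^{\mathrm{M}}BG\to L^{\mathrm{M}}BG$, quoting \cite[Lemma 7.1]{Tsu12} for the first step and the map $EG\to P^{\mathrm{M}}BG$ of Lemma~\ref{lem_G_to_OmegaBG} for the second, whereas you try to write down a single ``holonomy'' map $\ad EG\to L^{\mathrm{M}}BG$ directly.

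That single step is where your argument has a genuine gap. Write $\gamma_e$ for the contraction path of $e\in EG$ and $p\colon EG\to BG$ for the projection; the loop you attach to $(e,g)$ is (under any fixed convention) $p\gamma_e$ followed by the reverse of $p\gamma_{eg}$. For this to descend to $EG\times_GG^{\mathrm{ad}}$ you need the loops attached to $(e,g)$ and to $(eh,h^{-1}gh)$ to be \emph{equal}, not merely homotopic; already at $e=\ast$ this forces $p\gamma_{gh}=p\gamma_g+p\gamma_h$ in $P^{\mathrm{M}}BG$, i.e.\ it forces $\eta$ to be a strict homomorphism, which it is not. So the holonomy is not a well-defined map on $\ad EG$ at all; it is only a coherently ($A_\infty$-)equivariant map out of $EG\times G^{\mathrm{ad}}$, and converting that coherent equivariance into an honest fiberwise map is exactly the content of the step you skipped --- this is why the paper interposes the bundle $EG\times_G\Omega^{\mathrm{M}}BG$ and cites \cite[Lemma 7.1]{Tsu12} for the replacement of the fiber $G^{\mathrm{ad}}$ by $(\Omega^{\mathrm{M}}BG)^{\mathrm{ad}}$. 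You do flag a coherence problem at the end, but you locate it in the compatibility of the fiberwise multiplications, whereas the more basic failure is the well-definedness of the underlying map. Your fallback via classifying maps into $B\mathcal{W}\mathcal{A}_\infty(G,G;\mathrm{eq})$ is also not free: one must first know that the fiberwise monoid $L^{\mathrm{M}}BG$, which is not presented as an associated bundle of a principal $G$-bundle, admits such a classifying map, and that is essentially the statement being proved.
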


\begin{proof}
We just outline the proof.
As in \cite[Lemma 7.1]{Tsu12}, one can show that $\ad EG$ is fiberwise $A_\infty$-equivalent to $EG\times_G\Omega^{\mathrm{M}}BG$.
Using the map $EG\to PBG\subset P^{\mathrm{M}}BG$ in the proof of Lemma \ref{lem_G_to_OmegaBG}, one can construct a fiberwise $A_\infty$-equivalence
\[
	EG\times_G\Omega^{\mathrm{M}}BG\to L^{\mathrm{M}}BG.
\]
Composing these equivalences, we have the fiberwise $A_\infty$-equivalence $\ad EG\to L^{\mathrm{M}}BG$.
\end{proof}

We consider a natural family of subgroups of homotopy groups.

\begin{dfn}
For a well-pointed space $X$ of the homotopy type of a connected CW complex, define
\begin{align*}
G_{n,k}(X):=\{f\in\pi_n(X)\mid X\text{ is a }T_k^f\text{-space}\}.
\end{align*}
Equivalently, $G_{n,k}(X)$ is the image of the induced map of the evaluation map
\begin{align*}
\pi_n(\Map_0(B_k\Omega^{\mathrm{M}}X,X;\iota_k))\to\pi_n(X).
\end{align*}
\end{dfn}

For $k=\infty$, $G_n(X):=G_{n,\infty}(X)$ is the \textit{$n$-th Gottlieb group} introduced by D. H. Gottlieb \cite{Got69}.
If $k_1>k_2$, there is the inclusion $G_{n,k_1}(X)\subset G_{n,k_2}(X)$.
The space $X$ is said to be the \textit{Gottlieb space} if $G_{n,\infty}(X)=\pi_n(X)$ for any $n$.

Gottlieb \cite{Got69} also introduced the subgroup called the \textit{$n$-th Whitehead center}
\[
	P_n(X):=\{f\in\pi_n(X)\mid [f,g]=0\text{ for any }g\in\pi_*(X)\},
\]
where $[f,g]$ denotes the Whitehead product of $f$ and $g$.
By Remark \ref{rem_connecting_Whitehead}, $P_n(X)\supset G_{n,1}(X)$.
Then we have the sequence of inclusions
\[
	G_n(X)=G_{n,\infty}(X)\subset\cdots\subset G_{n,k}(X)\subset G_{n,k-1}(X)\subset\cdots\subset G_{n,1}(X)\subset P_n(X)\subset\pi_n(X).
\]

Now we interpret the result obtained by the author in \cite{Tsu15} in the language of $T^f_k$-spaces.

\begin{ex}
Let $p$ be an odd prime.
Denote the map $S^4\cong B_1\SU(2)\xrightarrow{\iota_1}B\SU(2)$ and their localizations by $\epsilon$.
The author proved in \cite{Tsu15} that $B\SU(2)_{(p)}$ is a $T_{\tfrac{(r+1)(p-1)}{2}-1}^{p^r\epsilon}$-space but not a $T_{\tfrac{(r+1)(p-1)}{2}}^{p^r\epsilon}$-space.
Then $G_{4,k}(B\SU(2)_{(p)})\subset\pi_4(B\SU(2)_{(p)})\cong\mathbb{Z}_{(p)}$ is the subgroup of index $p^r$ if $\tfrac{r(p-1)}{2}\le k<\tfrac{(r+1)(p-1)}{2}$ and $G_{4,\infty}(B\SU(2)_{(p)})=0$.

Each element $\alpha\in G_{4,\tfrac{r(p-1)}{2}}(B\SU(2)_{(p)})-G_{4,\tfrac{r(p-1)}{2}-1}(B\SU(2)_{(p)})$ has a non-trivial image in
\[
	\pi_3(\Map_0(B_{\tfrac{r(p-1)}{2}}\SU(2)_{(p)},B\SU(2)_{(p)};\iota_{\tfrac{r(p-1)}{2}}))
\]
but trivial in
\[
	\pi_3(\Map_0(B_{\tfrac{r(p-1)}{2}-1}\SU(2)_{(p)},B\SU(2)_{(p)};\iota_{\tfrac{r(p-1)}{2}-1})).
\]
Thus, considering the fiber sequence
\[
	\Map_0(\Sigma^kG^{\wedge k},BG;\iota_k)\to\Map_0(B_kG,BG;\iota_k)\to\Map_0(B_kG,BG;\iota_{k-1}),
\]
it lifts to a non-trivial element in
\[
	\pi_3(\Map_0(\Sigma^{\tfrac{r(p-1)}{2}}\SU(2)_{(p)}^{\wedge{\tfrac{r(p-1)}{2}}},B\SU(2)_{(p)};\iota_{\tfrac{r(p-1)}{2}}))\cong\pi_{2r(p-1)+2}(S^3_{(p)}).
\]
\end{ex}

\section{Application: Some remarks on homotopy pullback of $A_n$-maps}
\label{section_homotopypullback}

Let $G_1,G_2,G_3,G$ be well-pointed grouplike topological monoids of the homotopy types of CW complexes.
Consider the following homotopy commutative diagram in $\mathcal{A}_\infty$:
\[
\xymatrix{
	G \ar[r] \ar[d] &
		G_1 \ar[d] \\
	G_2 \ar[r] &
		G_3.
}
\]
Take another well-pointed topological monoid $H$ of the homotopy types of CW complex.
Then, for the induced two homotopy commutative diagrams
\[
\xymatrix{
	\mathcal{A}_n(H,G) \ar[r] \ar[d] &
		\mathcal{A}_n(H,G_1) \ar[d] \\
	\mathcal{A}_n(H,G_2) \ar[r] &
		\mathcal{A}_n(H,G_3),
}
\qquad
\xymatrix{
	\Map_0(B_nH,BG) \ar[r] \ar[d] &
		\Map_0(B_nH,BG_1) \ar[d] \\
	\Map_0(B_nH,BG_2) \ar[r] &
		\Map_0(B_nH,BG_3),
}
\]
the left square is a homotopy pullback if and only if so is the right by Theorem \ref{mainthm}.
In particular, considering the homotopy pullback of $A_n$-maps $H\to G_i$ for $i=1,2,3$ is equivalent to that of the induced maps $B_nH\to BG_i$ for $i=1,2,3$.
Moreover, if $n=\infty$ and the left square is pullback for any $H$, then $BG$ is the homotopy pullback of the diagram
\[
	BG_1\rightarrow BG_3\leftarrow BG_2.
\]

\end{document}